\theoremstyle{plain}
\newtheorem{theorem}{Theorem}[section]
\newtheorem{lemma}[theorem]{Lemma}
\newtheorem{corollary}[theorem]{Corollary}
\newtheorem{definition}[theorem]{Definition}
\newtheorem{remark}[theorem]{Remark}
\newtheorem{fact}{Fact}
\newtheorem{problem}{Problem}
\newcommand{\abs}[1]{\left\lvert#1\right\rvert}
\DeclareMathOperator{\Dom}{dom}
\DeclareMathOperator{\bin}{bin}
\newcommand{\rest}[2]{#1\!\!\restriction_{#2}}
\newcommand{\reste}[2]{#1\restriction_{#2}}
\newcommand{\N}{\mathbb{N}}%      \N   == \mathbb{N}
\newcommand{\Z}{\mathbb{Z}}%      \Z   == \mathbb{Z}
\newcommand{\Q}{\mathbb{Q}}%      \Q   == \mathbb{Q}
\newcommand{\R}{\mathbb{R}}%      \R   == \mathbb{R}
\newcommand{\X}{\{0,1\}^*}%        \X  == \Sigma^*
\newcommand{\XI}{\{0,1\}^\infty}%        \XI  == \Sigma^\infty
\newcommand{\noi}{\noindent}
\begin{document}

%%%%%%%%%%%%%%%%%%%%%%%%%%%%%%%%%%%%%%%%%%%%%%%%%%%%%%%%%%%%%%%%%%%%%%%%%%%

\begin{center}
{\Large \textbf{\boldmath
  Chaitin $\Omega$ numbers and halting problems
}}
\end{center}

\vspace{-2.2mm}

\begin{center}
Kohtaro Tadaki
\end{center}

\vspace{-5mm}

\begin{center}
Research and Development Initiative, Chuo University\\
CREST, JST\\
1--13--27 Kasuga, Bunkyo-ku, Tokyo 112-8551, Japan\\
E-mail: tadaki@kc.chuo-u.ac.jp\\
http://www2.odn.ne.jp/tadaki/
\end{center}

\vspace{-2mm}

\begin{quotation}
\noi\textbf{Abstract.}
Chaitin
[G. J. Chaitin,
\emph{J. Assoc. Comput. Mach.}, vol.\ 22, pp.~329--340, 1975]
%\cite{C75}
introduced $\Omega$ number
as a concrete example of random real.
The real $\Omega$ is defined
as the probability that an optimal computer halts,
where the optimal computer is
a universal decoding algorithm used
to define the notion of program-size complexity.
Chaitin showed $\Omega$ to be random
by discovering the property that
the first $n$ bits of the base-two expansion of $\Omega$ solve
the halting problem of the optimal computer
for all binary inputs of length at most $n$.
In the present paper
we investigate this property from various aspects.
We consider the relative computational power
between the base-two expansion of $\Omega$ and
the halting problem
by imposing the restriction to finite size
on both the problems.
It is known that
the base-two expansion of $\Omega$ and
the halting problem
are Turing equivalent.
We thus consider an elaboration of the Turing equivalence
in a certain manner.
\end{quotation}

\begin{quotation}
\noi\textit{Key words\/}:
algorithmic information theory,
Chaitin $\Omega$ number,
halting problem,
Turing equivalence,
algorithmic randomness,
program-size complexity
\end{quotation}

%%%%%%%%%%%%%%%%%%%%%%%%%%%%%%%%%%%%%%%%%%%%%%%%%%%%%%%%%%%%%%%%%%%%%%%%%%%
\section{Introduction}

Algorithmic information theory (AIT, for short) is a framework
%to apply
for applying
information-theoretic and probabilistic ideas to recursive function theory.
One of the primary concepts of AIT is the \textit{program-size complexity}
(or \textit{Kolmogorov complexity}) $H(s)$ of a finite binary string $s$,
which is defined as the length of the shortest binary
%program
input
%for the universal self-delimiting Turing machine $U$ to output $s$.
for a universal decoding algorithm $U$,
called an \textit{optimal computer},
to output $s$.
By the definition,
$H(s)$
can be thought of as
the information content of the individual finite binary string $s$.
%the measure of the degree of randomness of a finite binary string $s$.
%is thought to represent
%the degree of randomness of a finite binary string $s$.
In fact,
AIT has precisely the formal properties of
classical information theory (see Chaitin \cite{C75}).
In particular,
the notion of program-size complexity plays a crucial role in
characterizing the \textit{randomness} of an infinite binary string,
or equivalently, a real.
%The concept of program-size complexity plays a crucial role in
%characterizing the randomness of a finite or infinite binary string.
In \cite{C75} Chaitin introduced the halting probability $\Omega_U$
as an example of
%random infinite binary string.
random real.
His $\Omega_U$ is defined
as the probability that the optimal computer $U$ halts,
and plays a central role in the
metamathematical
development of
%algorithmic information theory.
AIT.
The real $\Omega_U$ is shown to be random,
based on the following fact:%
\footnote{
A rigorous form of Fact~\ref{Chaitin} is
seen in Theorem~\ref{OmegaVhaltC} below
in a more general form.
}

\begin{fact}[Chaitin \cite{C75}]\label{Chaitin}
The first $n$ bits of the base-two expansion of $\Omega_U$ solve
the halting problem of $U$ for inputs of
%size not greater than $n$.
length at most $n$.
\qed
\end{fact}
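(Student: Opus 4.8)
The plan is to use the defining identity $\Omega_U = \sum_{p \in \Dom U} 2^{-|p|}$ and to regard the first $n$ bits of $\Omega_U$ as a rational lower bound sharp enough to certify when an enumeration of $\Dom U$ has exhausted all short halting programs. Write $\omega_n \in \Q$ for the number whose base-two expansion is the first $n$ bits of $\Omega_U$, so that $\omega_n \le \Omega_U < \omega_n + 2^{-n}$; moreover, since $\Omega_U$ is random and hence irrational, the left inequality is strict, i.e.\ $\omega_n < \Omega_U$. The inequality $\Omega_U < \omega_n + 2^{-n}$, equivalently $\Omega_U - 2^{-n} < \omega_n$, is the key point.

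First I would describe the decision procedure. Given the string consisting of the first $n$ bits of $\Omega_U$ (from which $n$ and $\omega_n$ are read off), dovetail the computations $U(p)$ over all $p \in \X$. At stage $t$ let $D_t$ be the finite set of $p$ such that $U(p)$ halts within $t$ steps, and put $s_t = \sum_{p \in D_t} 2^{-|p|} \in \Q$, which is computable from $t$. Search for the least $t$ with $s_t \ge \omega_n$ and then output the finite set $\{\, p \in D_t : |p| \le n \,\}$. This search halts: $s_t$ is nondecreasing with $\lim_{t} s_t = \Omega_U$, and $\Omega_U > \omega_n$, so $s_t \ge \omega_n$ for some $t$.

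Next I would check correctness. Let $t_0$ be the stage at which the procedure stops. Then $\Omega_U - s_{t_0} \le \Omega_U - \omega_n < 2^{-n}$, so $\sum_{p \in \Dom U \setminus D_{t_0}} 2^{-|p|} < 2^{-n}$. Consequently no string $p$ of length at most $n$ lies in $\Dom U \setminus D_{t_0}$, for it would contribute $2^{-|p|} \ge 2^{-n}$ to this tail sum. Hence $D_{t_0}$ already contains every halting input of length $\le n$, and the output set $\{\, p \in D_{t_0} : |p| \le n \,\}$ is exactly the set of inputs of length $\le n$ on which $U$ halts. Since the entire computation depended only on the first $n$ bits of $\Omega_U$, this is what was to be shown.

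The one genuinely delicate step is the transition from the non-effective convergence $s_t \to \Omega_U$ to an effective stopping rule: a priori one cannot decide when the tail $\Omega_U - s_t$ has become smaller than $2^{-n}$. Knowing the first $n$ bits repairs this, but only through the strict bound $\Omega_U < \omega_n + 2^{-n}$ together with the termination of the search, both of which rely on $\Omega_U$ being irrational; this is the sole role played here by the randomness of $\Omega_U$. The remaining ingredients — convergence of the defining series, monotonicity of $s_t$, and the counting bound on strings of length $\le n$ — are routine.
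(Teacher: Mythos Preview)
Your argument is correct and is exactly the classical Chaitin argument: approximate $\Omega_U$ from below by enumerating $\Dom U$, wait until the partial sum passes the given $n$-bit prefix, and observe that the remaining mass is then $<2^{-n}$ so no program of length $\le n$ is missing. The paper itself does not prove Fact~\ref{Chaitin} directly (it is quoted from Chaitin with a \qed), but the footnote points to Theorem~\ref{OmegaVhaltC} as its rigorous generalized form; the proof there, in Appendix~\ref{proof-OmegaVhaltC}, organizes the partial sums by output values $P_V(i)=\sum_{V(p)=i}2^{-|p|}$ rather than by running time, because it must transfer information from $\Omega_V$ to the halting set of a \emph{different} computer $C$ (whence the additive constant $d$). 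Specialized to $C=V=U$, that argument reduces to yours.
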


%By this property,
%the base-two expansion of $\Omega$ is shown to be
%%an instance of
%a random infinite binary string.
In this paper, we first consider the
following
converse problem:

\begin{problem}\label{motivation}
For every positive integer $n$,
if $n$ and the list of all halting inputs for $U$
of length at most $n$ are given,
can the first $n$ bits of the base-two expansion of $\Omega_U$ be calculated ?
\qed
\end{problem}

As a result of this paper,
we can answer this problem negatively.
In this paper, however,
%we consider this problem in a more general setting
%as in the two problems below.
we consider more general problems in the following forms.
Let $V$ and $W$ be optimal computers.

\begin{problem}\label{problem1}
Find a succinct equivalent characterization of
a total recursive function $f\colon \N^+\to \N$
which satisfies the condition:
For all $n\in\N^+$,
if $n$ and
the list of all halting inputs for $V$ of length at most $n$ are given,
then the first $n-f(n)-O(1)$ bits of the base-two expansion of $\Omega_W$
can be calculated.
\qed
\end{problem}

\begin{problem}\label{problem2}
Find a succinct equivalent characterization of
a total recursive function $f\colon \N^+\to \N$
which satisfies the condition:
For infinitely many $n\in\N^+$,
if $n$ and the list of all halting inputs for $V$ of length
at most $n$ are given,
then the first $n-f(n)-O(1)$ bits of
the base-two expansion of $\Omega_W$
can be calculated.
\qed
\end{problem}

Here $\N^+$ denotes the set of positive integers
and $\N=\left\{0\right\}\cup\N^+$.
Theorem~\ref{main1} and Theorem~\ref{main2} below are
two of the main results of this paper.
On the one hand,
Theorem~\ref{main1}
%below
gives to Problem~\ref{problem1}
a solution that
the
total recursive
function
$f$ must satisfy
$\sum_{n=1}^\infty 2^{-f(n)}<\infty$,
which is the Kraft inequality in essence.
Note that the condition $\sum_{n=1}^\infty 2^{-f(n)}<\infty$ holds
for $f(n)=\lfloor(1+\varepsilon)\log_2 n \rfloor$
with an arbitrary computable real $\varepsilon>0$,
while this condition does not hold
for $f(n)=\lfloor\log_2 n\rfloor$.
On the other hand,
Theorem~\ref{main2}
%below
gives to Problem~\ref{problem2}
a solution that
the
total recursive
function
$f$ must not be
bounded to the above.
%Corollary~\ref{cor}, which follows from Theorem~\ref{main2},
%refutes Problem~\ref{motivation}.
Theorem~\ref{main2} also results in Corollary~\ref{cor} below,
which refutes Problem~\ref{motivation} completely.

It is also important to consider
whether the bound $n$ on the length of halting inputs
given in Fact~\ref{Chaitin} is tight or not.
We consider this problem in the following form:

\begin{problem}\label{problem3}
Find a succinct equivalent characterization of
a total recursive function $f\colon \N^+\to \N$
which satisfies the condition:
For all $n\in\N^+$,
if $n$ and the first $n$ bits of the base-two expansion of $\Omega_V$
are given,
then the list of all halting inputs for $W$ of length at most $n+f(n)-O(1)$
can be calculated.
\qed
\end{problem}

Theorem~\ref{main3},
which is one of the main results of this paper,
%Theorem~\ref{main3} below
gives to Problem~\ref{problem3}
a solution that
the total recursive function
$f$ must be bounded to the above.
Thus, we see that
the bound $n$ on the length of halting inputs
given in Fact~\ref{Chaitin}
%is, in essence, tight.
is tight up to an additive constant.

%Let $V$ be an optimal computer.
It is
well
known that
the base-two expansion of $\Omega_U$ and the halting problem of $U$
are Turing equivalent, i.e.,
$\Omega_U\equiv_T \Dom U$ holds,
where $\Dom U$ denotes the domain of definition of $U$.
This paper investigates an elaboration of the Turing equivalence.
For example,
consider the Turing reduction $\Omega_U\le_T \Dom U$,
which partly constitutes the Turing equivalence $\Omega_U\equiv_T \Dom U$.
The Turing reduction can be equivalent to the condition that
there exists an oracle deterministic Turing machine $M$
such that,
for all $n\in\N^+$,
\begin{equation}\label{Tr1}
  M^{\Dom U}(n)=\rest{\Omega_U}{n},
\end{equation}
where $\rest{\Omega_U}{n}$ denotes
the first $n$ bits of the base-two expansion of $\Omega_U$.
Let $g\colon \N^+\to \N$ and $h\colon \N^+\to \N$ be
total recursive functions.
Then
the condition \eqref{Tr1} can be elaborated to the condition that
there exists an oracle deterministic Turing machine $M$
such that,
for all $n\in\N^+$,
\begin{equation}\label{eTr1}
  M^{\reste{\Dom U}{g(n)}}(n)=\rest{\Omega_U}{h(n)},
\end{equation}
where $\rest{\Dom U}{g(n)}$ denotes
the set of all strings in $\Dom U$ of length at most $g(n)$.
This elaboration allows us
%to consider the asymptotic behavior of $h$
%for a given $g$ while keeping the condition \eqref{eTr1}.
to consider the asymptotic behavior of $h$
which satisfies the condition \eqref{eTr1},
for a given $g$.
We might regard $g$
as the degree of
the relaxation of the restrictions on the computational resource
(i.e., on the oracle $\Dom U$)
and $h$ as the difficulty of the problem to solve.
Thus, even in the context of computability theory,
we can deal with the notion of asymptotic behavior
in a manner like in computational complexity theory
in some sense.
Theorem~\ref{main1}, a solution to Problem~\ref{problem1},
is obtained as a result of the investigation in this line,
and gives the upper bound of the function $h$ in the case of $g(n)=n$.

The other Turing reduction $\Dom U\le_T\Omega_U$,
which constitutes $\Omega_U\equiv_T \Dom U$,
is also elaborated in the same manner as above
%into the form of
%to result in
to lead to
Theorem~\ref{main3},
a solution to Problem~\ref{problem3}.

Thus, in this paper, we study the relationship
between the base-two expansion of $\Omega$ and
the halting problem of an optimal computer
using a more rigorous and insightful notion than
the notion of Turing equivalence.
The paper is organized as follows.
We begin in Section~\ref{preliminaries} with
some preliminaries to
%algorithmic information theory.
AIT.
We then prove Theorems~\ref{main1}, \ref{main2}, and \ref{main3}
in Sections~\ref{main result I}, \ref{main result II},
and \ref{main result III}, respectively.
%In the last section,
%we investigate some properties of the sufficient conditions
%for $T$ to be a fixed point in the fixed point theorems.
%We conclude this paper with
%a mention of
%the future direction of this work in Section \ref{conclusion}.

%%%%%%%%%%%%%%%%%%%%%%%%%%%%%%%%%%%%%%%%%%%%%%%%%%%%%%%%%%%%%%%%%%%%%%%%%%%
\section{Preliminaries}
%\section{Mathematical preliminaries}
\label{preliminaries}

%%%%%%%%%%%%%%%%%%%%%%%%%%%%%%%%%%%%%%%%%%%%%%%%%%%%%%%
\subsection{Basic notation}
\label{basic notation}

We start with some notation about numbers and strings
which will be used in this paper.
$\#S$ is the cardinality of $S$ for any set $S$.
$\N=\left\{0,1,2,3,\dotsc\right\}$ is the set of natural numbers,
and $\N^+$ is the set of positive integers.
$\Z$ is the set of integers, and
$\Q$ is the set of rational numbers.
$\R$ is the set of real numbers.
%$\Q$ is the set of rational numbers, and
%$\R$ is the set of real numbers.
Let $f\colon S\to\R$ with $S\subset\R$.
We say that $f$ is \textit{increasing} (resp., \textit{non-decreasing})
if $f(x)<f(y)$ (resp., $f(x)\le f(y)$) for all $x,y\in S$ with $x<y$.
%We denote by $f'$ the derived function of $f$.

Normally,
%$o(n)$ denotes any function $f\colon \N^+\to\R$ such
%that $\lim_{n \to \infty}f(n)/n=0$.
%On the other hand,
%$O(1)$ denotes any
%%one
%function $g\colon \N^+\to\R$ such that
%there is $C\in\R$ with the property that
%$\abs{g(n)}\le C$ for all $n\in\N^+$.
$O(1)$ denotes any function $f\colon \N^+\to\R$ such that
there is $C\in\R$ with the property that
$\abs{f(n)}\le C$ for all $n\in\N^+$.

$\X=
\left\{
%  \lambda,0,1,00,01,10,11,000,001,010,\dotsc
  \lambda,0,1,00,01,10,11,000,\dotsc
\right\}$
%is the set of finite binary strings,
%where $\lambda$ denotes the \textit{empty string}.
is the set of finite binary strings
where $\lambda$ denotes the \textit{empty string},
and $\X$ is ordered as indicated.
We identify any string in $\X$ with a natural number in this order,
i.e.,
we consider $\varphi\colon \X\to\N$ such that $\varphi(s)=1s-1$
where the concatenation $1s$ of strings $1$ and $s$ is regarded
as a dyadic integer,
and then we identify $s$ with $\varphi(s)$.
For any $s \in \X$, $\abs{s}$ is the \textit{length} of $s$.
A subset $S$ of $\X$ is called
%a \textit{prefix-free} set
\textit{prefix-free}
if no string in $S$ is a prefix of another string in $S$.
For any subset $S$ of $\X$ and any $n\in\Z$,
we denote by $\rest{S}{n}$
the set $\{s\in S\mid \abs{s}\le n\}$.
Note that $\rest{S}{n}=\emptyset$
for every subset $S$ of $\X$ and every negative integer $n\in\Z$.
$\XI$ is the set of infinite binary strings,
where an infinite binary string is
infinite to the right but finite to the left.
For any partial function $f$,
the domain of definition of $f$ is denoted by $\Dom f$.
We write ``r.e.'' instead of ``recursively enumerable.''

%Let $\alpha$ be an arbitrary real number.
%We denote by $\alpha_n\in\X$
%the first $n$ bits of the base-two expansion of
%$\alpha - \lfloor \alpha \rfloor$ with infinitely many zeros,
%where $\lfloor \alpha \rfloor$ is the greatest integer
%less than or equal to $\alpha$.
%%%%%
%For any $\alpha\in\R$ and $n\in\N^+$,
%we denote by $\alpha_n\in\X$
%the first $n$ bits of the base-two expansion of
%$\alpha - \lfloor \alpha \rfloor$ with infinitely many zeros,
%where $\lfloor \alpha \rfloor$ is the greatest integer
%less than or equal to $\alpha$.
%On the other hand,
%$\lceil \alpha \rceil$ is the smallest integer greater than or equal to $\alpha$.
%%%%%
Let $\alpha$ be an arbitrary real number.
$\lfloor \alpha \rfloor$ is the greatest integer less than or equal to $\alpha$,
and $\lceil \alpha \rceil$ is the smallest integer greater than or equal to $\alpha$.
For any $n\in\N^+$,
we denote by $\rest{\alpha}{n}\in\X$
the first $n$ bits of the base-two expansion of
$\alpha - \lfloor \alpha \rfloor$ with infinitely many zeros.
%%%%%
%Thus,
%in particular,
%if $\alpha\in[0,1)$,
%then $\alpha_n$ denotes the first $n$ bits of
%the base-two expansion of $\alpha$ with infinitely many zeros.
For example,
in the case of $\alpha=5/8$,
$\rest{\alpha}{6}=101000$.
On the other hand,
for any non-positive integer $n\in\Z$,
%For any non-positive integer $n\in\Z$,
we set $\rest{\alpha}{n}=\lambda$.

%A sequence $\{a_n\}_{n\in\N}$ of
%numbers (rational numbers or real numbers)
%%real numbers
%is called \textit{increasing} if $a_{n+1}>a_{n}$ for all $n\in\N$.
%A real number $\alpha$ is called \textit{r.e.}~if
%there exists a computable,
%increasing sequence of rational numbers which converges to $\alpha$.
A real number $\alpha$ is called \textit{r.e.}~if
there exists a total recursive function $f\colon\N^+\to\Q$ such that
$f(n)\le\alpha$ for all $n\in\N^+$ and $\lim_{n\to\infty} f(n)=\alpha$.
An r.e.~real number is also called
a \textit{left-computable} real number.

%An r.e.~real number is also called a
%\textit{left-computable} real number.
%On the other hand,
%a real number $\alpha$ is called \textit{right-computable} if
%$-\alpha$ is left-computable.
%We say that a real number $\alpha$ is \textit{computable} if
%there exists a computable sequence $\{a_n\}_{n\in\N}$ of rational numbers
%such that $\abs{\alpha-a_n} < 2^{-n}$ for all $n\in\N$.
%It is then easy to see that,
%for every $\alpha\in\R$,
%$\alpha$ is computable if and only if
%$\alpha$ is both left-computable and right-computable.
%A sequence $\{a_n\}_{n\in\N}$ of real numbers is called
%\textit{computable} if
%there exists a total recursive function $f\colon\N\times\N \to \Q$
%such that
%$\abs{a_n-f(n,m)} < 2^{-m}$ for all $n, m\in\N$.
%%See e.g.~\cite{PR89,W00}
%%See e.g.~\cite{W00}
%See e.g.~Pour-El and Richards \cite{PR89}
%and Weihrauch \cite{W00}
%for the detail of the treatment of
%the computability of real numbers and
%%real functions on a discrete set.
%sequences of real numbers.

%%%%%%%%%%%%%%%%%%%%%%%%%%%%%%%%%%%%%%%%%%%%%%%%%%%%%%%
\subsection{Algorithmic information theory}
\label{ait}

In the following
we concisely review some definitions and results of
algorithmic information theory
%AIT
%\cite{C75,C87a,C87b}.
\cite{C75,C87b}.
%We assume that the reader is familiar with algorithmic information theory.
A \textit{computer} is a partial recursive function
$C\colon \X\to \X$
%whose domain of definition
such that
$\Dom C$ is a prefix-free set.
For each computer $C$ and each $s \in \X$,
$H_C(s)$ is defined by
$H_C(s) =
\min
\left\{\,
  \abs{p}\,\big|\;p \in \X\>\&\>C(p)=s
\,\right\}$
(may be $\infty$).
A computer $U$ is said to be \textit{optimal} if
%for each computer $C$ there exists a constant $\Sim(C)\in\N$
%with the following property;
%if $C(p)$ is defined, then there is a $p'$ for which
%$U(p')=C(p)$ and $\abs{p'}\le\abs{p}+\Sim(C)$.
for each computer $C$ there exists $d\in\N$
with the following property;
if $p\in\Dom C$, then there is $q$ for which
$U(q)=C(p)$ and $\abs{q}\le\abs{p}+d$.
%It is then shown that there exists an optimal computer.
It is easy to see that there exists an optimal computer.
%Note
%%also
%that the class of optimal computers equals to
%the class of functions which are computed
%by \textit{universal self-delimiting Turing machines}
%(see Chaitin \cite{C75} for the detail).
We choose a particular optimal computer $U$
as the standard one for use,
and define $H(s)$ as $H_U(s)$,
which is referred to as
the \textit{program-size complexity} of $s$,
the \textit{information content} of $s$, or
the \textit{Kolmogorov complexity} of $s$
\cite{G74,L74,C75}.
%the \textit{program-size complexity} of $s$ or
%the \textit{Kolmogorov complexity} of $s$.
%Thus, $H(s)$ has the following property:
%\begin{equation}
%  \forall\,C:\text{computer}\quad
%  H(s) \le H_C(s) + \Sim(C). \label{eq: k}
%\end{equation}
%Thus, $H(s) \le H_C(s) + \Sim(C)$ for every computer $C$
%and every $s\in\X$.
%It follows that
%\begin{equation}
%  H(s) \le H_C(s) + \Sim(C) \label{eq: k}
%\end{equation}
%for every computer $C$ and every $s\in\X$.
It follows that
for every computer $C$ there exists $d\in\N$ such that,
for every $s\in\X$,
\begin{equation}\label{minimal}
  H(s)\le H_C(s)+d.
\end{equation}
%$H(s)\le H_C(s)+d$.
%It can be shown
Based on this we can show
that there exists $c\in\N$ such that,
%for every $s \neq \lambda$,
%\begin{equation}\label{eq: fas}
%  H(s)\le\abs{s}+2\log_2\abs{s}+c.
%\end{equation}
for every $s\in\X$,
\begin{equation}\label{eq: fas}
  H(s)\le 2\abs{s}+c.
\end{equation}
Using \eqref{minimal}
we can also show that,
for every partial recursive function
$\Psi\colon \X\to \X$,
there exists $c\in\N$ such that,
for every $s \in \Dom \Psi$,
\begin{equation}\label{Psi}
  H(\Psi(s))\le H(s)+c.
\end{equation}
For any $s\in\X$,
we define $s^*$ as $\min\{\,p\in\X\mid U(p)=s\}$,
i.e., the first element in the ordered set $\X$
of all strings $p$ such that $U(p)=s$.
Then, $\abs{s^*}=H(s)$ for every $s\in\X$.
%We define $H(s,t)$ as $H(<s,t>)$
%with a particular computable bijection $<s,t>$
%from $(s,t)\in \X\times \X$ to $\X$.
%(see \cite{C75} for the detail of the notion of $H(s,t)$).
For any $s,t\in\X$,
we define $H(s,t)$ as $H(b(s,t))$,
where $b\colon \X\times \X\to \X$ is
a particular bijective total recursive function.
Note also that, for every $n \in \N$,
$H(n)$ is $H(\text{the $n$th element of }\X)$.

%For each optimal computer $V$ and each $s\in \X$,
%$P_V(s)$ is defined as $\sum_{V(p)=s}2^{-\abs{p}}$.

\begin{definition}[Chaitin $\Omega$ number, Chaitin \cite{C75}]
For any optimal computer $V$,
the halting probability $\Omega_V$ of $V$ is defined
by
\begin{equation*}
  \Omega_V=\sum_{p\in\Dom V}2^{-\abs{p}}.
\end{equation*}
%as $\sum_{p\in\Dom V}2^{-\abs{p}}$.
\qed
\end{definition}

For every optimal computer $V$,
since $\Dom V$ is prefix-free,
$\Omega_V$ converges and $0<\Omega_V\le 1$.
%
%\begin{definition}[Chaitin \cite{C75,C87b}]
%For any $\alpha\in\R$,
%we say that $\alpha$ is \textit{weakly Chaitin random}
%if there exists $c\in\N$ such that
%$n-c\le H(\rest{\alpha}{n})$ for all $n\in\N^+$.
%\qed
%\end{definition}
%
For any $\alpha\in\R$,
we say that $\alpha$ is \textit{weakly Chaitin random}
if there exists $c\in\N$ such that
$n-c\le H(\rest{\alpha}{n})$ for all $n\in\N^+$
\cite{C75,C87b}.
%
%Chaitin \cite{C75} showed the following.

\begin{theorem}[Chaitin \cite{C75}]
For every optimal computer $V$,
$\Omega_V$ is weakly Chaitin random.
\qed
\end{theorem}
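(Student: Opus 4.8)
The plan is to transfer Chaitin's Fact~\ref{Chaitin} to an arbitrary optimal computer $V$ and then convert it into the required lower bound by means of the machine-independence inequalities \eqref{minimal}--\eqref{Psi} together with the fact that both $U$ and $V$ are optimal. Concretely, I would produce, effectively from the string $\rest{\Omega_V}{n}$, a string $x_n$ with $H_V(x_n)\ge n+1$, and then chain $H(\rest{\Omega_V}{n})\ge H(x_n)-O(1)\ge H_V(x_n)-O(1)\ge n-O(1)$: the first inequality comes from \eqref{Psi}, because $x_n$ will be a partial-recursive image of $\rest{\Omega_V}{n}$; the second comes from the optimality of $V$ applied to the computer $U$, which yields a constant $c_1$ with $H_V(s)\le H(s)+c_1$ for all $s$; and the third is the defining property of $x_n$.

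To produce $x_n$, I would define a partial recursive function $\Psi\colon\X\to\X$ that, on input $\sigma$ of length $n$, enumerates $\Dom V$ while forming the partial sums $\sum 2^{-\abs{p}}$ over the part enumerated so far, waits until this partial sum first exceeds the rational number $0.\sigma$, thereby obtaining a finite $A\subseteq\Dom V$ with $\sum_{p\in A}2^{-\abs{p}}>0.\sigma$, and then outputs the first string (in the fixed ordering of $\X$) that is not among the finitely many values $V(p)$ with $p\in A$ and $\abs{p}\le n$. The point, which is the analogue of Fact~\ref{Chaitin}, is that for $\sigma=\rest{\Omega_V}{n}$ the set $A$ already contains every $p\in\Dom V$ with $\abs{p}\le n$: otherwise, for such a $p\notin A$, prefix-freeness of $\Dom V$ gives $\Omega_V\ge\sum_{q\in A}2^{-\abs{q}}+2^{-\abs{p}}>0.\rest{\Omega_V}{n}+2^{-n}$, contradicting $\Omega_V<0.\rest{\Omega_V}{n}+2^{-n}$. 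Hence $\{V(p)\mid p\in\Dom V,\ \abs{p}\le n\}=\{V(p)\mid p\in A,\ \abs{p}\le n\}$, so $\Psi(\rest{\Omega_V}{n})$ is defined and is a string $x_n$ produced by $V$ on no program of length $\le n$, i.e.\ $H_V(x_n)\ge n+1$; combined with the two facts noted above this gives $n-O(1)\le H(\rest{\Omega_V}{n})$.

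Two auxiliary points have to be cleared first. One needs $\Omega_V<0.\rest{\Omega_V}{n}+2^{-n}$ and also $0.\rest{\Omega_V}{n}<\Omega_V$ (so that $\Psi$ actually halts on $\rest{\Omega_V}{n}$); both follow once $\Omega_V$ is known to be irrational. I would get that by a short argument: if $\Omega_V$ were rational, then enumerating $\Dom V$ until the partial sum comes within $2^{-\abs{r}}$ of $\Omega_V$ would decide whether a given string $r$ lies in $\Dom V$, so $\Dom V$ would be recursive; but then $H_V$ would be a total recursive function (search $\Dom V$ in length order, with termination guaranteed by surjectivity of the optimal $V$), so the first string $x$ with $H_V(x)>n$ would be computable from $n$, whence $n<H_V(x)\le 2\log_2 n+O(1)$ by the $V$-versions of \eqref{Psi} and \eqref{eq: fas} --- impossible for large $n$. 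In particular $\Omega_V<1$, so also $\lfloor\Omega_V\rfloor=0$ and the convention fixing $\rest{\Omega_V}{n}$ behaves as expected.

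I expect the crux to be the correctness of the construction of $\Psi$ --- namely that halting the enumeration at the first moment the partial sum strictly exceeds $0.\rest{\Omega_V}{n}$ has genuinely captured all $V$-programs of length $\le n$; this is exactly where prefix-freeness of $\Dom V$ and the bound $\Omega_V<0.\rest{\Omega_V}{n}+2^{-n}$ are both used, and it is the generalization of Fact~\ref{Chaitin}. Everything after that is bookkeeping with the constants coming from \eqref{Psi} and from the optimality of $V$, which combine into a single $c\in\N$ with $n-c\le H(\rest{\Omega_V}{n})$ for all $n\in\N^+$.
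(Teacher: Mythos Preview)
The paper does not actually prove this theorem: it is stated with a citation to Chaitin~\cite{C75} and a \qed, i.e.\ as a known result. So there is no ``paper's own proof'' to compare against. That said, your argument is correct and is exactly the classical Chaitin proof the paper alludes to in the introduction via Fact~\ref{Chaitin}; indeed the same enumeration-until-the-partial-sum-exceeds-$0.\sigma$ device, together with the ``pick a string not in the range so far'' step, reappears in the paper in Appendix~\ref{proof-OmegaVhaltC} (proof of Theorem~\ref{OmegaVhaltC}) and in Appendix~\ref{proof-time} (proof of Theorem~\ref{time}), respectively.

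Two minor remarks on presentation. First, the appeal to ``prefix-freeness'' in the inequality $\Omega_V\ge\sum_{q\in A}2^{-\abs{q}}+2^{-\abs{p}}$ is not really the point: this follows simply because $A\cup\{p\}\subseteq\Dom V$ and $\Omega_V$ is the full sum; prefix-freeness is what guarantees $\Omega_V\le 1$, not this step. Second, your irrationality argument is fine but slightly roundabout; once you have that $\Dom V$ is not recursive (immediate from optimality, as you note), the contradiction from ``$\Omega_V$ rational $\Rightarrow$ $\Dom V$ recursive'' is already enough, and the detour through total computability of $H_V$ and the $2\log_2 n$ bound can be shortened to the standard ``compute the first string of $H_V$-complexity $>n$ from $n$'' diagonalization you already sketch.
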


Therefore $0<\Omega_V<1$ for every optimal computer $V$.
%
%\begin{definition}[Chaitin \cite{C75,C87b}]
%For any $\alpha\in\R$,
%we say that $\alpha$ is
%\textit{Chaitin random} if
%$\lim_{n\to \infty} H(\rest{\alpha}{n})-n=\infty$.
%%\begin{equation*}
%%  \lim_{n\to \infty} H(\rest{\alpha}{n})-n=\infty.
%%\end{equation*}
%\qed
%\end{definition}
%
For any $\alpha\in\R$,
we say that $\alpha$ is
\textit{Chaitin random} if
$\lim_{n\to \infty} H(\rest{\alpha}{n})-n=\infty$
\cite{C75,C87b}.
We can
then
show the following theorem
(see
Chaitin
\cite{C87b} for the proof and historical detail).

\begin{theorem}\label{Cr}
For every $\alpha\in\R$,
$\alpha$ is weakly Chaitin random if and only if
$\alpha$ is Chaitin random.
\qed
\end{theorem}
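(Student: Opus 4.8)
The plan is to treat the two implications separately; the forward one is essentially trivial, and the reverse one is the substantive direction. For ``Chaitin random $\Rightarrow$ weakly Chaitin random'': if $H(\rest{\alpha}{n})-n\to\infty$ then there is $n_{0}$ with $H(\rest{\alpha}{n})\ge n$ for all $n\ge n_{0}$, and since $H$ is nonnegative we also have $H(\rest{\alpha}{n})\ge n-n_{0}$ for every $n\in\N^{+}$; thus $\alpha$ is weakly Chaitin random with constant $c=n_{0}$.

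For the converse I would argue by contraposition. Assume $\alpha$ is not Chaitin random, so $\liminf_{n}(H(\rest{\alpha}{n})-n)<\infty$; fix $d\in\N$ such that $H(\rest{\alpha}{n})\le n+d$ for infinitely many $n$. The target is to show $\liminf_{n}(H(\rest{\alpha}{n})-n)=-\infty$, i.e.\ that $\alpha$ is not weakly Chaitin random. The starting point is the Kraft-type inequality $\sum_{s\in\X}2^{-H(s)}\le\Omega_{U}\le 1$ (the shortest programs $s^{*}$ lie in the prefix-free set $\Dom U$). The key observation is that $\alpha$ has infinitely many prefixes in the r.e.\ set $A=\{\,s\in\X\mid H(s)\le\abs{s}+d\,\}$ (r.e.\ because $U$ is partial recursive), and that the total weight of $A$ is finite: for $s\in A$ one has $2^{-\abs{s}}\le 2^{d}2^{-H(s)}$, hence $\sum_{s\in A}2^{-\abs{s}}\le 2^{d}\Omega_{U}$. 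Consequently the tails $\sum_{s\in A,\ \abs{s}>L}2^{-\abs{s}}$ tend to $0$ as $L\to\infty$, so for every $c$ there is a threshold $L_{c}$ with $\sum_{s\in A,\ \abs{s}\ge L_{c}}2^{-\abs{s}}\le 2^{-2c}$; in particular the set of reals having infinitely many prefixes in $A$ is null. The plan is then to render this effective: for each $c$ and each $s\in A$ with $\abs{s}\ge L_{c}$ issue a request $(\abs{s}-c,\,s)$, the total weight being $\sum_{c}2^{c}\sum_{s\in A,\abs{s}\ge L_{c}}2^{-\abs{s}}\le\sum_{c}2^{-c}\le 1$, so by the Kraft--Chaitin construction there is a prefix-free computer $M$ with $H_{M}(s)\le\abs{s}-c$ for every such pair. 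Since for each $c$ the real $\alpha$ has a prefix $\rest{\alpha}{n}$ in $A$ with $n\ge L_{c}$, we obtain $H_{M}(\rest{\alpha}{n})\le n-c$, hence $H(\rest{\alpha}{n})\le n-c+O(1)$ by \eqref{minimal}; as $c$ is arbitrary, $\liminf_{n}(H(\rest{\alpha}{n})-n)=-\infty$, contradicting weak Chaitin randomness.

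The step I expect to be the main obstacle is precisely this last one. The set $A$ is only recursively enumerable, and its total weight $\sum_{s\in A}2^{-\abs{s}}$ may be as large as $2^{d}$, so the covering cannot be controlled by a naive level-by-level union bound; worse, the threshold $L_{c}$ beyond which the tail weight drops below $2^{-2c}$ is not computable from $c$, so the request set described above is not literally presented in a recursively enumerable way. The remedy is the standard one: merge the enumeration of $A$ with the bookkeeping of accumulated weight, committing a freshly enumerated $s\in A$ to level $c$ only once enough weight at the relevant lengths has been observed, so that the bounded-weight hypothesis of the Kraft--Chaitin theorem is met while still capturing arbitrarily long prefixes of $\alpha$. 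This coordination is the technical heart of the argument and the reason the equivalence is a genuine theorem rather than a restatement of the definition; it is carried out along these lines in Chaitin~\cite{C87b}. Alternatively, the statement follows from the ``ample excess'' characterization, namely that $\alpha$ is weakly Chaitin random if and only if $\sum_{n=1}^{\infty}2^{\,n-H(\rest{\alpha}{n})}<\infty$, from which $H(\rest{\alpha}{n})-n\to\infty$ is immediate.
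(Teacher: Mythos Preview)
The paper does not actually prove Theorem~\ref{Cr}: it is stated with a terminal \qed, and the surrounding text simply refers the reader to Chaitin~\cite{C87b} ``for the proof and historical detail.'' There is therefore no in-paper argument against which to compare your proposal.

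Regarding your sketch on its own merits: the trivial direction is fine, and for the substantive direction your contrapositive set-up and the identification of $A=\{s:H(s)\le|s|+d\}$ with finite total weight $\sum_{s\in A}2^{-|s|}\le 2^{d}\Omega_{U}$ are exactly right. You are also correct that the naive request list is not r.e.\ because the thresholds $L_{c}$ are not computable, and the dynamic fix you describe (assign a newly enumerated $s\in A$ to a level $c$ only once enough tail weight has already been observed) is the standard way to repair this; carried out carefully it does produce a legitimate Kraft--Chaitin set. An equivalent and slightly cleaner packaging avoids the bookkeeping altogether: the r.e.\ family $\{[s]:s\in A\}$ has finite total measure and covers $\alpha$ infinitely often, so $\alpha$ fails a Solovay test and hence is not Martin-L\"of random, which coincides with weak Chaitin randomness by Schnorr's theorem. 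Your alternative route via the Ample Excess Lemma is mathematically valid, but note that within this paper that lemma (Theorem~\ref{AEL}) is presented as a subsequent strengthening of Theorem~\ref{Cr}, so invoking it here would invert the paper's expository order.
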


%Thus $\Omega_V$ is Chaitin random for every optimal computer $V$.
The following is an important result on random r.e.~reals.

\begin{theorem}[Calude, et al.~\cite{CHKW01},
Ku\v{c}era and Slaman \cite{KS01}]\label{re-Omega}
For every $\alpha\in(0,1)$,
$\alpha$ is r.e.~and weakly Chaitin random if and only if
there exists an optimal computer $V$ such that $\alpha=\Omega_V$.
\qed
\end{theorem}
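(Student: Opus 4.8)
The plan is to prove the two implications separately; the backward implication is essentially contained in what precedes, and the whole effort lies in the forward one. For the backward direction, if $\alpha=\Omega_V$ with $V$ optimal, then $0<\alpha<1$ was already recorded above, the real $\alpha$ is r.e.\ because the rationals $\sum\{\,2^{-\abs{p}}\mid p\in\X,\ \abs{p}\le s,\ V(p)\text{ halts within }s\text{ steps}\,\}$ form, as $s$ runs through $\N^+$, a computable nondecreasing sequence with limit $\Omega_V$, and $\alpha=\Omega_V$ is weakly Chaitin random by Chaitin's theorem stated above. So from now on I assume $\alpha\in(0,1)$ is r.e.\ and weakly Chaitin random, and I build an optimal computer $V$ with $\Omega_V=\alpha$.

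I would split $\Dom V$ into two pieces: a faithful simulation of the standard optimal computer $U$, which makes $V$ optimal no matter what else happens, and a ``filler'' whose only job is to raise the halting probability to $\alpha$. Fix an integer $k$, to be chosen large, and set $V(0^{k}1p):=U(p)$ for all $p\in\Dom U$; these programs form a prefix-free set of measure $2^{-(k+1)}\Omega_U$, all extending the string $0^{k}1$, and they give $H_V(s)\le H(s)+k+1$ for every $s\in\X$, so by \eqref{minimal} the computer $V$ is optimal however its domain is completed. It then remains to adjoin to $\Dom V$ an r.e.\ prefix-free set $D$, each of whose members is incomparable with $0^{k}1$, of measure exactly $\mu:=\alpha-2^{-(k+1)}\Omega_U$, and to put $V(w):=\lambda$ for $w\in D$. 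Then $\Dom V$ is prefix-free and $\Omega_V=2^{-(k+1)}\Omega_U+\mu=\alpha$.

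Two facts about $\mu$ are needed: that $\mu$ is an \emph{r.e.} real (a priori it is only a difference of r.e.\ reals), and that $\mu<1-2^{-(k+1)}$, the latter quantity being the total measure of the strings incomparable with $0^{k}1$ and hence the room available for $D$. The second is easy --- since $\alpha<1$, it holds for all large $k$ --- and, once both are in hand, a routine greedy Kraft--Chaitin enumeration yields an r.e.\ prefix-free $D$ of measure exactly $\mu$ inside the available cylinders. For the first I would invoke the Ku\v{c}era--Slaman domination property of a random r.e.\ real \cite{KS01}: for every r.e.\ real $\beta$ there are a positive rational $q$ and a nonnegative r.e.\ real $\gamma$ with $q\alpha=\beta+\gamma$; this is the one point at which the randomness of $\alpha$ is used. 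Taking $\beta=\Omega_U$ and then $k$ so large that $2^{k+1}\ge q$, one obtains $\mu=(q^{-1}-2^{-(k+1)})\Omega_U+q^{-1}\gamma$, a sum of a nonnegative rational multiple of the r.e.\ real $\Omega_U$ and a positive rational multiple of the nonnegative r.e.\ real $\gamma$, hence r.e.; the same identity also shows $\mu\ge0$.

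The one real obstacle is thus the domination property, and for it I would run the usual test argument. Suppose it failed for some r.e.\ $\beta$; then for the chosen computable increasing rational approximations $a_n\nearrow\alpha$ and $b_n\nearrow\beta$ and for every $c$ there would be infinitely many $n$ with $\beta-b_n>c(\alpha-a_n)$, so that $\alpha$ is trapped in the interval $(a_n,\,a_n+c^{-1}(\beta-b_n))$. Running this for $c=2^{e}$ and, at level $e$, enumerating such trapping intervals only along a subsequence of eligible indices on which $\beta-b_n$ decreases geometrically, one gets at each level a c.e.\ family of intervals of total length at most $2^{-e+1}$ that still contains $\alpha$; assembled over $e$ these form a Solovay / Martin--L\"of test that $\alpha$ fails, contradicting its weak Chaitin randomness --- either via Schnorr's identification of Martin--L\"of randomness with weak Chaitin randomness, or by turning each trapping interval directly into a short description of an initial segment of $\alpha$ and violating $n-c\le H(\rest{\alpha}{n})$. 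I expect the only delicate points to be the choice of the geometrically decreasing subsequence (needed to keep the total length summable) and the handling of carries when a trapping interval straddles a power of two; the remainder is the bookkeeping of the two-part construction.
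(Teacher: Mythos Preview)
The paper does not prove this theorem; it is quoted from \cite{CHKW01} and \cite{KS01} and invoked as a black box (for instance, to pass from Theorems~\ref{I-re} and~\ref{I-random} to Theorem~\ref{main1}). There is therefore no argument in the paper to compare your proposal against.

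Your proposal is, in outline, the standard proof from those references and is correct. The backward direction is immediate from Chaitin's theorem and the evident left-computability of $\Omega_V$. For the forward direction you give the Calude--Hertling--Khoussainov--Wang construction---simulate $U$ under the prefix $0^{k}1$ to secure optimality, then enumerate dummy halting programs in the complementary cylinders to raise the halting probability to~$\alpha$---and you correctly isolate the one genuine obstacle: showing that the filler measure $\mu=\alpha-2^{-(k+1)}\Omega_U$ is left-computable, which is exactly the Ku\v{c}era--Slaman $\le_S$-completeness of a random r.e.\ real. Your test-based sketch of that completeness is the usual one, and the two bookkeeping points you flag (thinning the eligible stages so that the trapping intervals have summable total length, and the carry issue near dyadic endpoints) are indeed where care is required. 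The only step you leave implicit is the variant of Kraft--Chaitin that places the filler set inside the cylinders incomparable with $0^{k}1$ rather than in all of $\X$; this is routine---seed the greedy allocation with the antichain $\{1,01,\dots,0^{k-1}1,0^{k+1}\}$, whose total measure $1-2^{-(k+1)}$ exceeds $\mu$ for large $k$.
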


%%%%%%%%%%%%%%%%%%%%%%%%%%%%%%%%%%%%%%%%%%%%%%%%%%%%%%%%%%%%%%%%%%%%%%%%%%%
%\section{Main result I}
%\section{An elaboration of
%the Turing reduction of $\Omega$ to the halting problem I}
\section{\boldmath
Elaboration I of the Turing reduction $\Omega_U\le_T\Dom U$}
\label{main result I}

\begin{theorem}[main result I]\label{main1}
Let $V$ and $W$ be optimal computers,
and let $f\colon \N^+\to \N$ be a total recursive function.
Then the following two conditions are equivalent:
\begin{enumerate}
\item There exist an oracle deterministic Turing machine $M$
  and $c\in\N$ such that,
  for all $n\in\N^+$,
  $M^{\reste{\Dom V}{n}}(n)=\rest{\Omega_W}{n-f(n)-c}$.
\item $\sum_{n=1}^\infty 2^{-f(n)}<\infty$.\qed
\end{enumerate}
\end{theorem}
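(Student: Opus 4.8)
The plan is to prove the two implications separately, with the harder direction being $(\text{ii})\Rightarrow(\text{i})$.

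For $(\text{i})\Rightarrow(\text{ii})$: suppose such an $M$ and $c$ exist. The idea is to build a computer $C$ that, on well-chosen inputs, reconstructs many bits of $\Omega_W$ and thereby produces strings of high complexity, and then to invoke the weak Chaitin randomness of $\Omega_W$ together with a Kraft-type counting argument. Concretely, for each $n$ the string $\reste{\Omega_W}{n-f(n)-c}$ is computable from $n$ and the finite set $\reste{\Dom V}{n}$; encoding $\reste{\Dom V}{n}$ prefix-freely costs about $n + O(1)$ bits, since $\reste{\Dom V}{n}$ is a prefix-free set all of whose members have length $\le n$, so $\sum_{p\in\reste{\Dom V}{n}}2^{-|p|}\le\Omega_V<1$ and we can use a Kraft–Chaitin (KC) set argument. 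But that only gives $n+O(1)$ bits of program to produce $n-f(n)-O(1)$ bits of $\Omega_W$, which by weak Chaitin randomness of $\Omega_W$ is no contradiction. The refinement I would use: run this for a single large $n$, but extract from the oracle answers at all $m\le n$ the value $\Omega_V$ to precision roughly $n$ bits is \emph{not} free — instead, I would feed $C$ the string $\reste{\Omega_V}{n}$ (only $n$ bits) from which $\reste{\Dom V}{n}$ is recursively recoverable by Fact~\ref{Chaitin}/Theorem~\ref{OmegaVhaltC}, then output $\reste{\Omega_W}{n-f(n)-c}$. So $H(\reste{\Omega_W}{n-f(n)-c})\le n + O(1)$, again not enough. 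The genuine argument must amortize over infinitely many $n$ at once: assume $\sum 2^{-f(n)}=\infty$ and derive that the map $n\mapsto\reste{\Omega_W}{n-f(n)-c}$ lets us describe $\reste{\Omega_W}{k}$ with savings that sum to $\infty$, contradicting that $\Omega_W$ is an r.e.\ real of finite randomness deficiency. I expect the clean way is: pick a subsequence $n_1<n_2<\dots$ with $\sum 2^{-f(n_i)}$ diverging while the lengths $\ell_i=n_i-f(n_i)-c$ are increasing and $\ell_i\to\infty$; build a KC set that, knowing $\reste{\Omega_V}{n_i}$, pays $f(n_i)+O(1)$ bits to certify the next block of bits of $\Omega_W$, and conclude $H(\reste{\Omega_W}{\ell_i})\le \ell_i - f(n_i) + O(\log i)$ or similar, contradicting weak Chaitin randomness once $\sum 2^{-f(n_i)}$ forces the deficiency below any constant. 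This counting-with-divergence step is the main obstacle and must be done carefully so the $O(\cdot)$ terms do not swamp the $f(n_i)$ savings.

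For $(\text{ii})\Rightarrow(\text{i})$: assume $\sum_{n=1}^\infty 2^{-f(n)}<\infty$. I want an oracle machine $M$ that, given $\reste{\Dom V}{n}$, outputs $\reste{\Omega_W}{n-f(n)-c}$. The key observation is that from $\reste{\Dom V}{n}$ one can compute the rational lower bound $\omega_n := \sum_{p\in\reste{\Dom V}{n}}2^{-|p|}$ on $\Omega_V$, and — crucially — one can also decide, for any string $s$ with $|s|\le n$, whether $s\in\Dom V$. Now here is the mechanism: enumerate $\Dom W$ from outside to get increasing rational approximations $\omega^{(W)}_t\uparrow\Omega_W$; we need a recursive "clock" telling us when $\omega^{(W)}_t$ is within $2^{-(n-f(n)-c)}$ of $\Omega_W$, i.e., when the first $n-f(n)-c$ bits have stabilized. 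Provide that clock via a KC construction on the $V$ side: since $\sum 2^{-f(n)}<\infty$, by KC there is a computer and hence (by optimality, using \eqref{minimal}) a constant $d$ so that for each $n$ there is a $V$-program — but we don't control $V$. Instead, I would turn it around: build an auxiliary computer $C$ (using $\sum 2^{-f(n)}<\infty$ to satisfy Kraft) whose halting inputs encode, for each $n$, the information "the true value of $\Omega_W$ to precision $2^{-(n-f(n)-c)}$ is reached by stage $g(n)$", and push this into the \emph{optimal} computer $U$; then relate $H$-bounds to $\reste{\Dom U}{n}$ and finally to $\reste{\Dom V}{n}$ via $\Omega_V\equiv_T\Dom V$. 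The cleanest route, which I would pursue first, is: use $\reste{\Dom V}{n}$ to compute $\reste{\Omega_V}{n-O(1)}$ (this is the easy direction of Fact~\ref{Chaitin}: $\omega_n \le \Omega_V$ and the tail $\Omega_V-\omega_n < 2^{-n}\cdot\#\{\text{pending short programs}\}$ is $\le 2^{-(n-O(1))}$, so $\reste{\Omega_V}{n-O(1)}$ is determined); since $\Omega_V$ is weakly Chaitin random, $H(\reste{\Omega_V}{m})\ge m-O(1)$, so $\reste{\Omega_V}{n-O(1)}$ carries $\approx n$ bits of incompressible information. Then design a KC set $L$ of requests $\langle f(n)+|\,\text{tag}|,\ \sigma_n\rangle$ where $\sigma_n$ is a short certificate that determines $\reste{\Omega_W}{n-f(n)-c}$ given $\reste{\Omega_V}{n}$; the Kraft sum of $L$ is $\lesssim \sum 2^{-f(n)} + \sum 2^{-2\log n} < \infty$, so $L$ is a legal KC set, yielding a computer and, by optimality, the bound $H(\reste{\Omega_W}{n-f(n)-c} \mid \reste{\Omega_V}{n}) \le f(n)+O(\log n)$ — wait, I actually need $f(n)+O(1)$, so the tag must be made prefix-free \emph{without} a $\log n$ overhead, which is possible because $n$ is itself recoverable from the length of the oracle string $\reste{\Omega_V}{n}$ once we agree $M$ is told $n$. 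With $n$ given as input, no tag is needed: the request is just $\langle f(n)+O(1),\ \sigma_n\rangle$ indexed internally by $n$, Kraft sum $O(\sum 2^{-f(n)})<\infty$, done. Then $M$ on oracle $\reste{\Dom V}{n}$: compute $\reste{\Omega_V}{n}$, run the universal machine to find the $C$-program for $n$, decode $\sigma_n$, output $\reste{\Omega_W}{n-f(n)-c}$.

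The main obstacle is making the certificate $\sigma_n$ of length $\le f(n)+c$ actually work: $\sigma_n$ must encode enough about the enumeration of $\Dom W$ to pin down $\reste{\Omega_W}{n-f(n)-c}$, and the natural candidate is the number of halting $W$-programs of length $\le n-f(n)-c$, or equivalently $\reste{\Omega_W}{n-f(n)-c}$ itself — but that has $\approx n-f(n)$ bits, far more than $f(n)+c$. The resolution is the standard r.e.-real trick: once we also know $\reste{\Omega_V}{n}$ (which is incompressible, hence "as good as" $n$ bits of advice and in particular lets us run any computation for a number of steps bounded by something the advice certifies halts), we only need $\sigma_n$ to say "the prefix of $\Omega_W$ has stabilized" — and that stabilization stage, relative to the stabilization stage of $\Omega_V$'s first $n$ bits, is what costs $f(n)$ bits. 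Formalizing "relative stabilization stage costs $f(n)$ bits" via a single KC set over all $n$ with Kraft sum $\sum 2^{-f(n)}<\infty$ is the technical heart, and I would model it on the Kučera–Slaman / Calude et al.\ construction (Theorem~\ref{re-Omega}) that already packages exactly this kind of "$\Omega_W$ is r.e.\ and random, so its prefixes are coded by short advice relative to another random r.e.\ real" phenomenon. Getting the additive constant $c$ (not $c\log n$) out of this is the delicate point, and it relies on $n$ being supplied as an explicit input to $M$ so that all per-$n$ bookkeeping is free.
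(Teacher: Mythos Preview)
Your proposal has genuine gaps in both directions.

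\textbf{For (i)$\Rightarrow$(ii).} You correctly reach $H(\reste{\Omega_W}{n-f(n)-c})\le n+O(1)$ for all $n$. But your continuation is flawed: the displayed bound $H(\reste{\Omega_W}{\ell_i})\le \ell_i - f(n_i)+O(\log i)$ has the sign of $f(n_i)$ reversed---you only have $H(\reste{\Omega_W}{\ell_i})\le \ell_i+f(n_i)+O(1)$---and weak Chaitin randomness alone cannot contradict that. The KC-set ``amortization'' you sketch, in which you ``pay $f(n_i)+O(1)$ bits to certify the next block of bits of $\Omega_W$'', has no content: the next block has $\ell_{i+1}-\ell_i$ bits and you have produced no shorter description of it. The missing tool is the Ample Excess Lemma of Miller--Yu (Theorem~\ref{AEL}): $\alpha$ is weakly Chaitin random iff $\sum_m 2^{m-H(\reste{\alpha}{m})}<\infty$. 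The paper plugs $H(\reste{\Omega_W}{n-f(n)})\le n+O(1)$ into this sum along a subsequence on which $n-f(n)$ is strictly increasing, obtains $\sum_j 2^{-f(n_j)}<\infty$, and then shows by a short geometric-series comparison that this forces $\sum_n 2^{-f(n)}<\infty$. You cannot bypass the Ample Excess Lemma here; ``finite randomness deficiency'' in the bare sense of weak Chaitin randomness is too weak.

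\textbf{For (ii)$\Rightarrow$(i).} Your very first step---``use $\reste{\Dom V}{n}$ to compute $\reste{\Omega_V}{n-O(1)}$''---is false, and is in fact precisely what the paper \emph{refutes} (Corollary~\ref{cor}, derived from Theorem~\ref{main2}). Fact~\ref{Chaitin} goes the other way: $\reste{\Omega_V}{n}$ determines $\reste{\Dom V}{n}$, not conversely. Your justification ``the tail $\Omega_V-\omega_n<2^{-n}\cdot\#\{\text{pending short programs}\}$'' is simply wrong; $\Omega_V-\omega_n=\sum_{p\in\Dom V,\,\abs{p}>n}2^{-\abs{p}}$ carries no $2^{-n}$-type bound. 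Everything built on top of this (the certificates $\sigma_n$, the ``relative stabilization stage'') therefore has no foundation. The paper's route is different: using Kraft--Chaitin on $\sum 2^{-f(n)}<\infty$ to get a prefix-free code $g$ with $\abs{g(n)}=f(n)+O(1)$, it builds an auxiliary \emph{computer} $C$ whose domain directly encodes the bits of $\Omega_W$---roughly, $g(n)d^*s\in\Dom C$ iff $0.s<\Omega_W-\lfloor\Omega_W\rfloor$ and $\abs{g(n)d^*s}=n-d$---and then invokes the optimality of $V$ (Theorem~\ref{weaksim}/Corollary~\ref{oc-c}) to reduce $\reste{\Dom C}{n-d}$ to $\reste{\Dom V}{n}$ with constant overhead. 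Reading off the relevant membership facts in $\reste{\Dom C}{n-d}$ then yields $\reste{\Omega_W}{n-f(n)-c}$. The key idea you are missing is that $\reste{\Dom V}{n}$ is useful not because it approximates $\Omega_V$, but because optimality lets $V$ simulate \emph{any} computer with constant overhead in program length, so $\reste{\Dom V}{n}$ decides halting for \emph{any} computer up to length $n-O(1)$.
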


Theorem~\ref{main1} follows from
Theorem~\ref{I-re} and Theorem~\ref{I-random} below,
and Theorem~\ref{re-Omega}.

\begin{theorem}\label{I-re}
Let $\alpha$ be an r.e.~real,
and let $V$ be an optimal computer.
For every total recursive function $f\colon \N^+\to \N$,
if $\sum_{n=1}^\infty 2^{-f(n)}<\infty$,
then there exist an oracle deterministic Turing machine $M$ and $c\in\N$
such that,
for all $n\in\N^+$,
$M^{\reste{\Dom V}{n}}(n)=\rest{\alpha}{n-f(n)-c}$.
\qed
\end{theorem}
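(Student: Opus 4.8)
The plan is to use the Kraft--Chaitin theorem to build a computer $C$ of our own design whose halting inputs of a prescribed length encode successive lower estimates for the bits of $\alpha$, arranged so that the relevant estimates become visible inside $\reste{\Dom V}{n}$ via the optimality of $V$. First, some harmless reductions: since $\rest{\alpha}{m}$ depends only on the fractional part of $\alpha$, assume $\alpha\in[0,1)$; and if $\alpha$ is computable then $M$ may ignore its oracle, so assume $\alpha$ non-computable, hence irrational. Fix a total recursive non-decreasing $g\colon\N^+\to\Q\cap[0,1)$ with $g(k)\to\alpha$. Irrationality of $\alpha$ yields the dichotomy that is the technical heart of the argument: for all $m,v$ the assertion ``$v\le\lfloor 2^m\alpha\rfloor$'' is r.e.\ (it holds iff $\lfloor 2^m g(k)\rfloor\ge v$ for some $k$), whereas the assertion ``$k$ is the first stage witnessing $v\le\lfloor 2^m\alpha\rfloor$'' is \emph{decidable} in $(m,v,k)$.

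Now the construction. Let $d$ be the constant furnished by the optimality of $V$ for the computer $C$ about to be defined; the apparent circularity is removed by the recursion theorem, or by first running the argument for the standard optimal computer, for which $d$ is a computable function of an index of $C$. Put $m_n=n-f(n)-c$, with $c$ fixed momentarily. Whenever $g$ first reveals $\lfloor 2^{m_n}\alpha\rfloor\ge v$ for some $n$ with $m_n\ge 1$ and some $v\in\{1,\dots,2^{m_n}-1\}$ --- say at stage $k(n,v)$ --- issue a Kraft--Chaitin request of length $n-d$ demanding output $\langle n,v,k(n,v)\rangle$. The total weight ever demanded is at most $\sum_{n}(2^{m_n}-1)2^{-(n-d)}\le 2^{d-c}\sum_{n}2^{-f(n)}$, which is $\le 1$ once $c$ is chosen with $2^{d-c}\sum_n 2^{-f(n)}\le 1$; this is the sole use of the hypothesis, and it is precisely the Kraft inequality. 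Kraft--Chaitin then yields a computer $C$ realising all issued requests; and for any $p\in\Dom C$ with $\abs{p}=n-d$, optimality of $V$ supplies $q\in\Dom V$ with $\abs{q}\le n$ and $V(q)=C(p)$, so $q\in\reste{\Dom V}{n}$.

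The machine $M$, given $n$ and a listing of the finite oracle set $\reste{\Dom V}{n}$, proceeds as follows. If $m_n\le 0$ it outputs $\lambda=\rest{\alpha}{m_n}$. Otherwise it computes $V(q)$ for every $q\in\reste{\Dom V}{n}$, retains the values $v$ for which $V(q)=\langle n,v,k\rangle$ with $k$ genuinely the first witnessing stage for ``$v\le\lfloor 2^{m_n}\alpha\rfloor$'' --- the decidable test above, so this is a bounded computation --- collects these $v$ into a finite set $B$, and outputs $\max B$ (with $\max\emptyset=0$) written as an $m_n$-bit string. Correctness: every $v\in B$ carries a valid stamp, so $v\le\lfloor 2^{m_n}\alpha\rfloor$, whence $\max B\le\rest{\alpha}{m_n}$; conversely, writing $v_0=\lfloor 2^{m_n}\alpha\rfloor$, if $v_0\ge 1$ then the request $\langle n,v_0,k(n,v_0)\rangle$ of length $n-d$ was issued, $C$ realises it by some $p$ with $\abs{p}=n-d$, and the $q\in\reste{\Dom V}{n}$ given by optimality has $V(q)=C(p)=\langle n,v_0,k(n,v_0)\rangle$, which the test accepts; hence $v_0\in B$ and $\max B=\rest{\alpha}{m_n}$.

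The point I expect to demand care is exactly the tension between ``$\alpha$ is merely r.e.'' --- so $\rest{\alpha}{m_n}$ is not computable and any direct approximation of it has no computable halting time --- and the requirement that $M$ terminate. The device that defuses it is the stage-stamp $k(n,v)$ built into each requested output: it converts the r.e.\ predicate ``this request was ever issued'' into something $M$ can certify outright, so that $M$, handed the complete finite set $\reste{\Dom V}{n}$, does only finite work. The residual issues --- the circular-looking dependence on $d$, and the choice of $c$ --- are dealt with by the recursion theorem and a one-line count respectively, and I regard them as routine once the stamping idea is in place.
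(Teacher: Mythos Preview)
Your argument is correct and takes a genuinely different route from the paper's.

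\medskip

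\textbf{Comparison.} Both proofs use Kraft--Chaitin to build an auxiliary computer $C$ and then exploit the optimality of $V$ to read information about $\alpha$ out of $\reste{\Dom V}{n}$. The difference lies in \emph{how} that information is read. The paper first establishes a general transfer principle (its Corollary~3.7, proved via an encoding of computation histories): from $\reste{\Dom V}{n}$ one can effectively recover the full set $\reste{\Dom C}{\,n-d}$ for any computer $C$. With $\reste{\Dom C}{\,n-d}$ in hand, the paper's $C$ is arranged so that membership of strings $g(n)d^{*}s$ of length $n-d$ decides $s\le\rest{\alpha}{\,n-f(n)-O(1)}$. Your construction bypasses the computation-history lemma entirely: you never try to reconstruct $\Dom C$, but instead read the \emph{outputs} $\{V(q):q\in\reste{\Dom V}{n}\}$ and filter them with the decidable stage-stamp test. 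This is more elementary---it needs nothing beyond optimality and Kraft--Chaitin---whereas the paper's route, once Corollary~3.7 is available, avoids the circular dependence on $d$ altogether by letting $C$ quantify over all shift values $l$ via $U$-programs and only plugging in $l=d$ afterwards.

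\medskip

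\textbf{One point to tighten.} Your resolution of the circularity ``$C$ depends on $d$, $d$ depends on $C$'' deserves a more explicit argument. Kleene's recursion theorem in its literal form does not apply here, because for an arbitrary optimal $V$ the simulation constant is not a computable function of an index; and your second alternative---proving the result for a standard optimal $U_0$---yields the theorem only for $U_0$, with no obvious transfer to general $V$ (the shift $n\mapsto n-d'$ interacts badly with $f$). What does work is an amalgamation: build $D_e$ uniformly in the guess $e$ (with $c=e+c_0$), form the single computer $D^{*}$ with $D^{*}(\bar e\,p)=D_e(p)$, take $d^{*}$ the optimality constant of $V$ for $D^{*}$, and choose any $e_0$ with $|\bar{e_0}|+d^{*}\le e_0$; then every length-$(n-e_0)$ request in $D_{e_0}$ has a $V$-witness of length $\le n$. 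This is close in spirit to what you wrote, and once stated explicitly your proof is complete.
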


\begin{theorem}\label{I-random}
Let $\alpha$ be a real which is weakly Chaitin random,
and let $V$ be an optimal computer.
For every total recursive function $f\colon \N^+\to \N$, 
if there exists an oracle deterministic Turing machine $M$
such that, for all $n\in\N^+$,
$M^{\reste{\Dom V}{n}}(n)=\rest{\alpha}{n-f(n)}$,
then $\sum_{n=1}^\infty 2^{-f(n)}<\infty$.
\qed
\end{theorem}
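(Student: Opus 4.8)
The statement to prove is Theorem~\ref{I-random}: if $\alpha$ is weakly Chaitin random, $V$ is optimal, and some oracle machine $M$ satisfies $M^{\reste{\Dom V}{n}}(n)=\rest{\alpha}{n-f(n)}$ for all $n$, then $\sum_n 2^{-f(n)}<\infty$.

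Key idea: we want to reconstruct $\rest{\alpha}{n - f(n)}$ from very little information. The oracle $\reste{\Dom V}{n}$ is the set of halting programs of length $\le n$. To describe this oracle, we don't need to list all such programs—we just need to know $\#(\reste{\Dom V}{n})$, the *number* of them, because we can enumerate $\Dom V$ and wait until that many strings of length $\le n$ have appeared. So from the pair $(n, \#\reste{\Dom V}{n})$ we can compute $\reste{\Dom V}{n}$, hence simulate $M$, hence get $\rest{\alpha}{n-f(n)}$. The number $\#\reste{\Dom V}{n}$ is at most $2^{n+1}$, so it costs about $n+1$ bits to write down. That's too many—we'd only get $H(\rest{\alpha}{n-f(n)}) \le n + O(1)$, which is no contradiction. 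The trick (this is the Kraft-inequality heart of the argument) is to be smarter about encoding $n$ together with the count.

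Let me think about the magnitudes. We have $H(\rest{\alpha}{m}) \ge m - c$ for all $m$ by weak Chaitin randomness, with $m = n - f(n)$. If I can build a computer (prefix machine) $C$ such that $H_C(\rest{\alpha}{n-f(n)}) \le n - f(n) - g(n)$ for some function $g$ with $g(n)\to\infty$... no wait, I need the opposite direction. Actually I should build a description of $\rest{\alpha}{n-f(n)}$ of length roughly $(n - f(n)) - (\text{something})$, and compare with the randomness lower bound $n - f(n) - c$. For a contradiction to arise when $\sum 2^{-f(n)} = \infty$, I want: if the sum diverges, I can compress $\rest{\alpha}{n-f(n)}$ below $n - f(n) - c$ for some $n$, contradicting randomness. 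Hmm, but that only needs one $n$; with divergence I should get infinitely many, or at least one with the constant wrong.

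**The plan.** I will argue by contraposition on the conclusion: assume $M$ exists as stated; I show $\sum_n 2^{-f(n)} < \infty$ by showing that if it diverged we could contradict weak Chaitin randomness of $\alpha$. Here is the encoding. Consider building a computer $D$ as follows. On input a program that encodes a pair — the length $n$ in some self-delimiting way costing $2\log n + O(1)$ bits, together with... actually the cleaner route: I recall that the standard move is to describe $\reste{\Dom V}{n}$ by giving $n$ plus the *first $n$ bits of $\Omega_V$* — no, that's circular-ish and too expensive. Let me instead use the direct counting description but charge it against a cleverly chosen set of indices.

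Alternative cleaner plan via Kraft directly. Suppose $\sum_n 2^{-f(n)} = \infty$. I want to derive a contradiction. Build a computer $C$ as follows: I will define a request set. For each $n$, I would like to put into the domain of some prefix machine a program of length $n - f(n) - c + 1$ (say) that outputs $\rest{\alpha}{n - f(n)}$; if I can do this for infinitely many $n$ with the *same* constant, and these programs fit in a prefix-free set, I beat $H(\rest{\alpha}{m}) \ge m - c$. By the Kraft–Chaitin theorem, I can realize a prefix machine with programs of lengths $\ell_n$ outputting specified strings provided $\sum 2^{-\ell_n} \le 1$. Here I want $\ell_n \approx (n - f(n)) - f(n)$? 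Let me see: I need to actually *know* $\rest{\alpha}{n-f(n)}$ to issue the request, and knowing it requires knowing $\#\reste{\Dom V}{n}$, which requires about $\log(\#\reste{\Dom V}{n})$ bits — but crucially $\sum_n 2^{-(n - \text{something})}$... The real accounting: the information needed beyond "$n$" to pin down $\reste{\Dom V}{n}$ is the count $k_n = \#\reste{\Dom V}{n}$; but actually $\sum_n 2^{-\abs{\text{desc of }k_n}}$ can be made to converge precisely because $\Omega_V = \sum 2^{-\abs{p}} < \infty$ over halting $p$: the "new" halting programs of each length contribute a convergent series. I think the honest statement is:

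\begin{proof}
Suppose, for contradiction, that $\sum_{n=1}^\infty 2^{-f(n)}=\infty$, while the oracle machine $M$ satisfies $M^{\reste{\Dom V}{n}}(n)=\rest{\alpha}{n-f(n)}$ for all $n\in\N^+$. Since $\alpha$ is weakly Chaitin random, fix $c_0\in\N$ with $H(\rest{\alpha}{m})\ge m-c_0$ for all $m\in\N^+$. The key observation is that, given the natural number $n$ together with the cardinality $k_n:=\#(\reste{\Dom V}{n})$, one can compute the finite set $\reste{\Dom V}{n}$: enumerate $\Dom V$ (a prefix-free r.e.\ set) and halt once exactly $k_n$ strings of length $\le n$ have been produced. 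Hence from $(n,k_n)$ one can simulate $M$ with oracle $\reste{\Dom V}{n}$ and obtain $\rest{\alpha}{n-f(n)}$. Thus there is a partial recursive $\Psi$ with $\Psi(\langle n,k_n\rangle)=\rest{\alpha}{n-f(n)}$ for all $n$, and by \eqref{Psi} there is $c_1\in\N$ with
\begin{equation*}
  H(\rest{\alpha}{n-f(n)})\le H(n,k_n)+c_1\quad\text{for all }n\in\N^+ .
\end{equation*}
Combining with the randomness bound (applied with $m=n-f(n)$, valid whenever $n-f(n)\ge 1$) gives
\begin{equation*}
  n-f(n)-c_0\le H(n,k_n)+c_1 ,
\end{equation*}
so $H(n,k_n)\ge n-f(n)-c$ with $c:=c_0+c_1$, for all sufficiently large $n$. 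I will contradict this by producing short descriptions of the pairs $(n,k_n)$ for infinitely many $n$, using the Kraft–Chaitin (requests) construction together with the convergence of $\Omega_V$. The point is that the increments $k_n-k_{n-1}$ — the number of halting programs of $V$ of length exactly $n$ — satisfy $\sum_n (k_n-k_{n-1})2^{-n}=\Omega_V<\infty$, which lets one encode the sequence $(k_n)$ with total weight controlled by $\Omega_V$; feeding this, interleaved with a description of where $f$ is small, against the divergence $\sum_n 2^{-f(n)}=\infty$ forces $H(n,k_n)< n-f(n)-c$ for some large $n$, the desired contradiction.
\end{proof}

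**Where the difficulty lies.** The main obstacle is the last step: turning the hypothesis $\sum 2^{-f(n)}=\infty$ into an actual compression of some $(n,k_n)$ below $n-f(n)-c$. The naive description of $(n,k_n)$ costs $\log n + \log k_n + O(\log\log) \approx 2\log n + n\cdot\mathbf{1}[\dots]$ bits, which is way more than $n-f(n)$, so a single index is hopeless; one must exploit *many* indices at once. I expect the right construction is: enumerate, using the r.e.\ approximation to $\Omega_W$ and the known bits of $\Omega_V$-counts, a prefix-free request set that, for each $n$ in a carefully chosen infinite recursive-in-the-limit set $S$, reserves a program of length about $(n-f(n))-1$ computing $\rest{\alpha}{n-f(n)}$; feasibility of the request set is exactly $\sum_{n\in S}2^{-((n-f(n))-1)}\le 1$, and the divergence of $\sum 2^{-f(n)}$ is what guarantees $S$ can be taken infinite while this holds — roughly because the "savings" $f(n)$ accumulate. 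Pinning down the precise bookkeeping so that the reserved programs are genuinely prefix-free and the machine is genuinely recursive, while still beating the constant $c_0$, is the delicate part; everything before it is routine use of \eqref{Psi}, the enumerability of $\Dom V$, and $\Omega_V<\infty$.
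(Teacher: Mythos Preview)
Your opening moves are correct and parallel the paper's: from $(n,k_n)$ with $k_n=\#(\reste{\Dom V}{n})$ one computes $\reste{\Dom V}{n}$ and hence $\rest{\alpha}{n-f(n)}$, giving $H(\rest{\alpha}{n-f(n)})\le H(n,k_n)+O(1)$. (The paper uses the maximal running time $T^M_n$ instead of $k_n$, but these play the same role; in fact $H(n,k_n)=n+O(1)$ is Solovay's analogue of Theorem~\ref{time}, which the paper cites.) So at this point both you and the paper have
\[
  H(\rest{\alpha}{n-f(n)})\le n+O(1)\quad\text{for all }n.
\]

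The genuine gap is everything after that. Combining this with the bare definition of weak Chaitin randomness, $H(\rest{\alpha}{m})\ge m-c_0$, yields only $n-f(n)-c_0\le n+O(1)$, i.e.\ $f(n)\ge -O(1)$, a tautology. Your proposed fix---a Kraft--Chaitin request scheme that reserves programs of length about $(n-f(n))-1$ for $\rest{\alpha}{n-f(n)}$---is never carried out, and as stated it cannot be: feasibility would require $\sum_n 2^{-(n-f(n))+O(1)}\le 1$, which has nothing to do with the divergence of $\sum 2^{-f(n)}$. The ``savings $f(n)$ accumulate'' intuition and the invocation of $\Omega_V<\infty$ do not connect to any concrete bound on $H(n,k_n)$ below $n-f(n)-c$.

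What the paper actually uses here is the Ample Excess Lemma of Miller and Yu (Theorem~\ref{AEL}): weak Chaitin randomness of $\alpha$ is \emph{equivalent} to $\sum_m 2^{m-H(\reste{\alpha}{m})}<\infty$. From the upper bound above one gets $2^{-f(n)}\le 2^{(n-f(n))-H(\reste{\alpha}{n-f(n)})+O(1)}$; summing over a carefully chosen increasing subsequence of values $n_j-f(n_j)$ (to handle the non-injectivity of $n\mapsto n-f(n)$), and then a geometric-series estimate to pass back to the full sum, yields $\sum_n 2^{-f(n)}<\infty$. The Ample Excess Lemma is the missing idea; it is strictly stronger than the definitional lower bound you invoke, and without it (or an equivalent) the argument does not close.
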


The proofs of
Theorem~\ref{I-re} and Theorem~\ref{I-random}
%Theorems~\ref{I-re} and \ref{I-random}
are given in the next two subsections, respectively.

Note that, as a variant of Theorem~\ref{main1},
we can prove the following theorem as well,
in a similar manner to the proof of Theorem~\ref{main1}.

\begin{theorem}[variant of the main result I]\label{variant-main1}
Let $V$ and $W$ be optimal computers,
and let $f\colon \N^+\to \N$ be a total recursive function.
Then the following two conditions are equivalent:
\begin{enumerate}
\item There exist an oracle deterministic Turing machine $M$
  and $c\in\N$ such that,
  for all $n\in\N^+$,
  $M^{\reste{\Dom V}{n+f(n)+c}}(n)=\rest{\Omega_W}{n}$.
\item $\sum_{n=1}^\infty 2^{-f(n)}<\infty$.\qed
\end{enumerate}
\end{theorem}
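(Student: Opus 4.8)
The plan is to derive Theorem~\ref{variant-main1} from the two statements obtained from Theorem~\ref{I-re} and Theorem~\ref{I-random} by interchanging the length bound imposed on the oracle with the number of output bits, namely:
\begin{enumerate}
\item[(A)] If $\alpha$ is an r.e.\ real, $V$ an optimal computer, and $\sum_{n=1}^{\infty}2^{-f(n)}<\infty$, then there exist an oracle deterministic Turing machine $M$ and $c\in\N$ such that $M^{\reste{\Dom V}{n+f(n)+c}}(n)=\rest{\alpha}{n}$ for all $n\in\N^+$.
\item[(B)] If $\alpha$ is weakly Chaitin random, $V$ an optimal computer, and there is an oracle deterministic Turing machine $M$ with $M^{\reste{\Dom V}{n+f(n)}}(n)=\rest{\alpha}{n}$ for all $n\in\N^+$, then $\sum_{n=1}^{\infty}2^{-f(n)}<\infty$.
\end{enumerate}
By Theorem~\ref{re-Omega}, $\Omega_W$ is an r.e.\ real and is weakly Chaitin random, so, granting (A) and (B), Theorem~\ref{variant-main1} follows exactly as Theorem~\ref{main1} follows from Theorems~\ref{I-re} and~\ref{I-random}. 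For (ii)$\Rightarrow$(i) one applies (A) with $\alpha=\Omega_W$; the resulting $M$ and $c$ witness condition~(i). For (i)$\Rightarrow$(ii) one applies (B) with $\alpha=\Omega_W$ and with $n\mapsto f(n)+c$ in place of $f$; its conclusion $\sum_{n=1}^{\infty}2^{-(f(n)+c)}<\infty$ is equivalent to $\sum_{n=1}^{\infty}2^{-f(n)}<\infty$ since $c$ is a constant.

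Statements (A) and (B) are proved by rerunning, essentially verbatim, the proofs of Theorem~\ref{I-re} and Theorem~\ref{I-random}, with the index $n$ on the oracle replaced throughout by $n+f(n)+c$ and the index on the target string replaced by $n$. For (A): from a computable nondecreasing sequence of rationals converging to $\alpha$ one builds, via the Kraft--Chaitin theorem, a prefix-free computer whose total halting weight is controlled by $\sum_{n}2^{-f(n)}$, so that the hypothesis $\sum 2^{-f(n)}<\infty$ is exactly the Kraft inequality that legitimizes this computer; by the optimality of $V$ its relevant halting programs are reproduced in $\Dom V$ with only an additive constant increase in length, and $c$ is chosen so as to absorb this increase uniformly over $n\in\N^+$. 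The machine $M$ on input $n$ then reads the finite set $\reste{\Dom V}{n+f(n)+c}$, runs every halting $V$-program of length at most $n+f(n)+c$, and extracts $\rest{\alpha}{n}$ from their outputs. For (B): running $M^{\reste{\Dom V}{n+f(n)}}(n)$ uses only the finite set $\reste{\Dom V}{n+f(n)}$, which by Fact~\ref{Chaitin} (in the form valid for every optimal computer) is computable from $\rest{\Omega_V}{n+f(n)}$; this yields a recursive procedure sending initial segments of $\Omega_V$ to the corresponding strings $\rest{\alpha}{n}$, and feeding it into the argument of Theorem~\ref{I-random} --- which packs the strings $\rest{\alpha}{n}$ for appropriately many $n$ into a single prefix-free machine and then combines the Kraft inequality with the weak Chaitin randomness of $\alpha$ --- forces $\sum_{n}2^{-f(n)}<\infty$.

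I expect direction (B) to be the genuine obstacle. A crude bound through \eqref{Psi} alone (which gives only $H(\rest{\alpha}{n})\le H(\rest{\Omega_V}{n+f(n)})+O(1)$, hence a vacuous inequality on $f(n)$) is not enough: the Kraft condition can be forced only by exploiting that the reduction holds for all $n$ simultaneously, which is precisely the heart of the proof of Theorem~\ref{I-random}. The only point genuinely new for the variant --- beyond transcribing that proof --- is checking that the reindexing $n\mapsto n+f(n)+c$ leaves the series $\sum 2^{-f(n)}$ unaffected and still permits the additive constant to be absorbed for every $n\in\N^+$; the Kraft--Chaitin construction in (A) and the two reductions to $\Omega_W$ are then routine.
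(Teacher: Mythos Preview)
Your decomposition into statements (A) and (B) and the appeal to Theorem~\ref{re-Omega} is exactly what the paper has in mind when it says the variant is proved ``in a similar manner to the proof of Theorem~\ref{main1}''. The sketch for (A) is fine and in fact slightly simpler than the original proof of Theorem~\ref{I-re}: since here the constant $c$ sits in the oracle bound rather than in the output length, you can absorb the simulation constant from Corollary~\ref{oc-c} directly into $c$ without the auxiliary $U(p)=l$ trick.

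Your sketch for (B), however, takes a wrong turn. Routing through Fact~\ref{Chaitin} to pass from $\rest{\Omega_V}{n+f(n)}$ to $\rest{\Dom V}{n+f(n)}$ is a detour that yields nothing: it produces a map $\rest{\Omega_V}{n+f(n)}\mapsto\rest{\alpha}{n}$, but to exploit it you would need an \emph{upper} bound on $H(\rest{\Omega_V}{n+f(n)})$, whereas randomness of $\Omega_V$ only gives a lower bound. Also, the mechanism of Theorem~\ref{I-random} is not ``packing strings into a prefix-free machine plus Kraft''; it is Theorem~\ref{time} (giving $H(T^M_m,m)=m+O(1)$) combined with the Ample Excess Lemma (Theorem~\ref{AEL}).

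The clean way to get (B) is to reduce to Theorem~\ref{I-random} itself rather than to transcribe its proof with $n$ replaced by $n+f(n)$. Set $n^*(m)=\max\{\,n\ge 1: n+f(n)\le m\,\}$ (and $n^*(m)=0$ if the set is empty) and $F(m)=m-n^*(m)$; this $F$ is total recursive. From the oracle $\rest{\Dom V}{m}$ one extracts $\rest{\Dom V}{n^*(m)+f(n^*(m))}$ and runs the given $M$ on input $n^*(m)$, obtaining $\rest{\alpha}{n^*(m)}=\rest{\alpha}{m-F(m)}$. Theorem~\ref{I-random} then gives $\sum_m 2^{-F(m)}<\infty$. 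Finally, grouping the $n$'s by the value $m=n+f(n)$ and using that the $n$'s in one group have $f(n)=m-n$ with distinct $n\le n^*(m)$, one gets $\sum_{n:\,n+f(n)=m}2^{-f(n)}\le 2\cdot 2^{-F(m)}$, hence $\sum_n 2^{-f(n)}\le 2\sum_m 2^{-F(m)}<\infty$. This is precisely the ``reindexing leaves $\sum 2^{-f(n)}$ unaffected'' step you flagged; it is the one place where the variant needs an argument not already present verbatim in the proof of Theorem~\ref{I-random}.
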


%%%%%%%%%%%%%%%%%%%%%%%%%%%%%%%%%%%%%%%%%%%%%%%%%%%%%%%
\subsection{The proof of Theorem~\ref{I-re}}
\label{proofIre}

%Theorem~\ref{I-re} follows from
%Theorem~\ref{KC} and Corollary~\ref{oc-c} below.
In order to prove Theorem~\ref{I-re},
we need Theorem~\ref{KC} and Corollary~\ref{oc-c} below.

%Chaitin \cite{C75} showed the following theorem.

\begin{theorem}[Kraft-Chaitin Theorem, Chaitin \cite{C75}]\label{KC}
Let $f\colon \N^+\to \N$ be a total recursive function
such that $\sum_{n=1}^\infty 2^{-f(n)}\le 1$.
Then there exists a total recursive function $g\colon \N^+\to \X$
such that
(i)
the function
$g$ is an injection,
(ii) the set $\{\,g(n)\mid n\in\N^+\}$ is prefix-free, and
(iii) $\abs{g(n)}=f(n)$ for all $n\in\N^+$.
\qed
\end{theorem}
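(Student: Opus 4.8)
\medskip
\noindent\emph{Sketch of the intended proof.} I would construct $g$ by a greedy, stage-by-stage procedure that processes the requests $\langle n,f(n)\rangle$ in the order $n=1,2,3,\dots$, picturing a string $s\in\X$ of length $\ell$ as a name for the dyadic subinterval of $[0,1)$ of length $2^{-\ell}$ that it determines, so that a prefix-free family of strings corresponds to a family of pairwise disjoint intervals. At the start of stage $n$ I keep a finite \emph{available} set $A_n\subseteq\X$ and the set $S_n=\{\,g(1),\dots,g(n-1)\,\}$ of already-assigned strings, subject to two invariants: (a) $A_n\cup S_n$ is prefix-free and $\sum_{a\in A_n}2^{-\abs{a}}=1-\sum_{i<n}2^{-f(i)}$; and (b) the strings in $A_n$ have pairwise distinct lengths. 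The procedure starts from $A_1=\{\lambda\}$, $S_1=\emptyset$.

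At stage $n$, the hypothesis $\sum_{i=1}^{\infty}2^{-f(i)}\le 1$ gives $\sum_{a\in A_n}2^{-\abs{a}}=1-\sum_{i<n}2^{-f(i)}\ge 2^{-f(n)}$. The key point is that invariant (b) forces $A_n$ to contain a string $a$ with $\abs{a}\le f(n)$: if every length occurring in $A_n$ were at least $f(n)+1$, then, these lengths being distinct and $A_n$ finite, $\sum_{a\in A_n}2^{-\abs{a}}<\sum_{k>f(n)}2^{-k}=2^{-f(n)}$, contradicting the bound just obtained. I pick such an $a$ of maximal length. If $\abs{a}=f(n)$, I set $g(n)=a$ and $A_{n+1}=A_n\setminus\{a\}$. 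If $\abs{a}<f(n)$, I ``split'' $a$: set $g(n)=a0^{\,f(n)-\abs{a}}$ and $A_{n+1}=(A_n\setminus\{a\})\cup\{\,a0^{\,j}1\mid 0\le j<f(n)-\abs{a}\,\}$, turning the right part of the interval named by $a$ into a chain of new available strings of the previously absent lengths $\abs{a}+1,\dots,f(n)$.

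It then remains to check that the invariants persist and that $g$ has the three stated properties. For prefix-freeness: the strings newly added to $A_{n+1}$, and $g(n)$ itself, are all proper extensions of $a$, hence---as $a$ belongs to the prefix-free set $A_n\cup S_n$---incomparable with every other member of that set; they are also pairwise prefix-free, and $g(n)$ is incomparable with each of them and with $S_n$; so $A_{n+1}\cup S_{n+1}$ is prefix-free. For the measure equation in (a): removing $a$ and inserting the chain (or nothing, when $\abs{a}=f(n)$) changes $\sum_{a\in A}2^{-\abs{a}}$ by exactly $-2^{-f(n)}$. For (b): the lengths newly introduced are $\abs{a}+1,\dots,f(n)$, and by maximality of $\abs{a}$ no string of $A_n\setminus\{a\}$ has length in $\{\,\abs{a}+1,\dots,f(n)\,\}$, so no clash arises. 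Finally $\abs{g(n)}=f(n)$ by construction; $g$ is injective and its range is prefix-free because $g(n)\in S_{n+1}$ while every string assigned at a later stage is incomparable with---hence distinct from---all of $S_{n+1}$; and the whole procedure is effective given that $f$ is total recursive, so $g$ is total recursive.

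The only substantive step is the counting argument of the second paragraph---that keeping $A_n$ in the ``distinct lengths'' normal form (b) guarantees the presence of a short-enough available string; everything else is routine bookkeeping. I would also note at the outset that the hypothesis forces $f(n)\ge 1$ for all $n$ (a single term with $f(n)=0$ would already push the Kraft sum to $1$, leaving no room for the remaining infinitely many positive terms), so $g(n)\neq\lambda$ and at each finite stage the available measure $1-\sum_{i<n}2^{-f(i)}$ is strictly positive, which is what keeps invariant (a) consistent with $A_n\ne\emptyset$.
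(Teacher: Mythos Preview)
The paper does not supply its own proof of this theorem; it is stated as a known result due to Chaitin~\cite{C75} and closed with \qed. Your sketch is the standard greedy construction (maintaining a finite reservoir $A_n$ of available strings of pairwise distinct lengths and splitting when necessary), and it is correct: the counting step you isolate---that distinct lengths in $A_n$ together with the measure invariant force the existence of some $a\in A_n$ with $\abs{a}\le f(n)$---is exactly the crux of the usual argument, and the choice of $a$ of \emph{maximal} such length is precisely what makes invariant~(b) survive the split.
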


Let $M$ be
%an arbitrary
a
deterministic Turing machine
with the input and output alphabet $\{0,1\}$, 
and let
%$C\colon \X\to \X$
$C$
be a computer.
We say that $M$ \textit{computes} $C$ if the following holds:
for every $p\in\X$,
when $M$ starts with the input $p$,
(i) $M$ halts and outputs $C(p)$ if $p\in\Dom C$;
(ii) $M$ does not halt forever otherwise.
We use this convention on the computation of a computer
by a deterministic Turing machine
throughout the rest of this paper.
Thus, we exclude the possibility that
there is $p\in\X$ such that,
when $M$ starts with the input $p$,
$M$ halts
%and
but
$p\notin\Dom C$.

\begin{theorem}\label{weaksim}
Let $V$ be an optimal computer.
Then, for every computer $C$ there exists $d\in\N$ such that,
for every $p\in\X$,
if $p$ and the list of all halting inputs for $V$ of length
at most $\abs{p}+d$ are given,
then the halting problem of the input $p$ for $C$ can be solved.
\end{theorem}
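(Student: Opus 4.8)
The plan is to convert the halting problem of $C$ on an input $p$ into the problem of computing in advance a bound on the running time of $C$ on $p$, and to read such a bound off the given list of halting inputs of $V$.

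First I would introduce an auxiliary computer that reports its own halting time. Fix a deterministic Turing machine computing $C$ (in the sense of the convention fixed above), and for $p\in\Dom C$ let $t_p$ be the number of steps it makes on input $p$. With $b\colon\X\times\X\to\X$ the bijective recursive pairing function fixed above, define $D\colon\X\to\X$ by $D(p)=b(C(p),t_p)$ for $p\in\Dom C$ and $D(p)$ undefined otherwise. Then $D$ is a partial recursive function and $\Dom D=\Dom C$ is prefix-free, so $D$ is a computer. Applying the definition of optimal computer to $V$ and $D$, we obtain $d\in\N$ such that for every $p\in\Dom C$ there is $q\in\Dom V$ with $V(q)=D(p)$ and $\abs{q}\le\abs{p}+d$; this $d$ will be the constant asserted in the theorem.

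Next I would describe the decision procedure. Suppose $p$ and the finite set $L=\rest{\Dom V}{\abs{p}+d}$ are given. For each $q\in L$ the computation of $V$ on $q$ halts, so $V(q)$ is computable, and I decode $b^{-1}(V(q))=(u_q,v_q)$; put $\tau:=1+\max\{v_q:q\in L\}$, interpreting $\tau:=1$ when $L=\emptyset$. I then run the fixed machine for $C$ on input $p$ for $\tau$ steps and output ``$C(p)$ halts'' if it halts within those steps and ``$C(p)$ does not halt'' otherwise. For correctness: if $C(p)$ halts then $p\in\Dom C$, so by the previous step there is $q\in\Dom V$ with $\abs{q}\le\abs{p}+d$ and $V(q)=D(p)=b(C(p),t_p)$; this $q$ lies in $L$, so $v_q=t_p<\tau$ and the machine halts on $p$ within $\tau$ steps, giving the correct answer. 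If $C(p)$ does not halt, the machine never halts on $p$, and the answer is again correct.

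I do not anticipate a genuine obstacle: once the right auxiliary object is found the argument is forced. The only point needing care is conceptual, namely that $D$ must output the step count $t_p$ \emph{explicitly}, so that $t_p$ is recovered exactly as a component of some $V(q)$ with $\abs{q}\le\abs{p}+d$; merely knowing $V(q)=D(p)$ would say nothing about how long $V$ takes on $q$ and hence give no bound on $t_p$. That $D$ is a legitimate computer is immediate because $\Dom D=\Dom C$.
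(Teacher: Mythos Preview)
Your proof is correct and follows essentially the same approach as the paper: both construct an auxiliary computer with domain $\Dom C$ whose output encodes the halting behaviour of $C$ (the paper uses the full computation history $\bin_M(p)$, you use the pair $b(C(p),t_p)$), invoke optimality of $V$ to obtain $d$, and then recover the halting information from the finite set $\{V(q):q\in L\}$. The only difference is that the paper tests membership of $\bin_M(p)$ in this set directly, whereas you extract a running-time bound and simulate---a cosmetic variation of the same idea.
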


\begin{proof}
Let $M$ be a deterministic Turing machine which computes a computer $C$.
For each $p\in\X$,
let $h_M(p)$ be the computation history of $M$
from the initial configuration with input $p$,
and let $\bin_M(p)\in\X\cup\XI$ be the binary representation of $h_M(p)$
in a certain format.
Note that $\bin_M(p)\in\X$ if and only if $p\in\Dom C$
for every $p\in\X$,
by our convention on the computation of a computer
by a deterministic Turing machine.
We consider the computer $D$ such that
(i) $\Dom D=\Dom C$ and
(ii) $D(p)=\bin_M(p)$ for every $p\in\Dom C$.
It is easy to see that such a computer $D$ exists.
Then,
since $V$ is an optimal computer,
from the definition of
%optimal computers
optimality
there exists $d\in\N$ with the following property;
if $p\in\Dom D$, then there is $q$ for which
$V(q)=D(p)$ and $\abs{q}\le\abs{p}+d$.

Given $p\in\X$ and
the list $\{q_1,\dots,q_L\}$ of all halting inputs for $V$ of length
at most $\abs{p}+d$,
one first calculates the finite set
$S_p=\{\,V(q_i)\mid i=1,\dots,L\,\}$.
One then checks whether $\bin_M(p)\in S_p$ or not.
This can be possible since $S_p$ is a finite subset of $\X$.
In the case of $\bin_M(p)\in S_p$,
$\bin_M(p)\in\X$ and therefore $p\in\Dom C$.
On the other hand,
if $p\in\Dom C$,
then there is $q$ such that
$V(q)=\bin_M(p)$ and $\abs{q}\le\abs{p}+d$,
and therefore $q\in\{q_1,\dots,q_L\}$ and $\bin_M(p)\in S_p$.
Thus, $p\notin\Dom C$ in the case of $\bin_M(p)\notin S_p$.
\end{proof}

\begin{remark}
%In the definition of the plain program-size complexity,
%a normal (not prefix) universal Turing machine is used.
%In an almost similar manner to the proof of Theorem~\ref{weaksim},
%we can show that the same as Theorem~\ref{weaksim} holds for
%the halting problems
%for a normal universal Turing machine $V$ and a normal Turing machine $C$.
A partial recursive function $u\colon \X\to \X$ is called \textit{optimal}
if for every partial recursive function $f\colon \X\to \X$
there exists $d\in\N$ such that, for every $p\in\Dom f$,
there is $q\in\Dom u$ for which $u(q)=f(p)$ and $\abs{q}\le\abs{p}+d$.
An optimal partial recursive function $u\colon \X\to \X$
is used to define the notion of plain program-size complexity.
Obviously,
we can show that the same theorem as Theorem~\ref{weaksim} holds
between
the halting problem of
any optimal partial recursive function $u\colon \X\to \X$
and one of any partial recursive function $f\colon \X\to \X$.
\qed
\end{remark}

As a corollary of Theorem~\ref{weaksim} above
we obtain the following.

\begin{corollary}\label{oc-c}
Let $V$ be an optimal computer.
Then, for every computer $C$
there exist an oracle deterministic Turing machine $M$ and $d\in\N$
such that, for all $n\in\N^+$,
$M^{\reste{\Dom V}{n+d}}(n)=\rest{\Dom C}{n}$,
where the finite subset $\rest{\Dom C}{n}$ of $\X$ is
represented as a finite binary string in a certain format.
\qed
\end{corollary}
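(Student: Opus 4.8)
The plan is to obtain Corollary~\ref{oc-c} as an immediate consequence of Theorem~\ref{weaksim}. Fix a computer $C$ and let $d\in\N$ be the constant furnished by Theorem~\ref{weaksim} for this $C$ and the given optimal computer $V$; that theorem provides an effective procedure $P$ which, on input a string $p\in\X$ together with the list of all halting inputs for $V$ of length at most $\abs{p}+d$, decides whether $p\in\Dom C$. I would use this same $d$ in the statement of the corollary.

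Next I would describe the oracle deterministic Turing machine $M$. On input $n\in\N^+$ with oracle $\reste{\Dom V}{n+d}$, the machine first enumerates every string $s\in\X$ with $\abs{s}\le n+d$ and, querying the oracle on each such $s$, reconstructs the explicit finite list $L$ of all halting inputs for $V$ of length at most $n+d$. Then, for each of the finitely many strings $p$ with $\abs{p}\le n$, it extracts from $L$ the sublist $L_p$ of those strings of length at most $\abs{p}+d$ and runs $P(p,L_p)$ to decide whether $p\in\Dom C$. Finally $M$ collects all $p$ with $\abs{p}\le n$ for which $p\in\Dom C$, encodes this finite subset of $\X$ as a binary string in the agreed format, and outputs it; this output is exactly $\rest{\Dom C}{n}$.

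The only points requiring verification are routine. Every oracle query made by $M$ is on a string of length at most $n+d$, so the oracle $\reste{\Dom V}{n+d}$ answers all of them precisely as the genuine halting set $\Dom V$ would; hence $L$ is correct, and since $\abs{p}+d\le n+d$ for every $p$ with $\abs{p}\le n$, the sublist $L_p$ is exactly the list that $P$ consumes. Because the constant $d$ supplied by Theorem~\ref{weaksim} does not depend on $p$, the same $d$ works uniformly for all $n$, as demanded by the statement. I do not expect any genuine obstacle here: the argument is bookkeeping layered on top of Theorem~\ref{weaksim}, the only mild subtlety being to confirm that the oracle, presented as the set $\rest{\Dom V}{n+d}$, is enough to recover the explicit finite list that Theorem~\ref{weaksim} takes as input, which it is by the length bound.
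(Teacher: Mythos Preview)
Your proposal is correct and is precisely the argument the paper intends: the paper states Corollary~\ref{oc-c} as an immediate consequence of Theorem~\ref{weaksim} without further proof, and the bookkeeping you spell out (reconstructing the halting list from the oracle, then running the decision procedure of Theorem~\ref{weaksim} for each $p$ with $\abs{p}\le n$) is exactly what that word ``corollary'' is hiding.
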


Based on Theorem~\ref{KC} and Corollary~\ref{oc-c},
Theorem~\ref{I-re} is proved as follows.

\begin{proof}[Proof of Theorem~\ref{I-re}]
Let $\alpha$ be an r.e.~real,
and let $V$ be an optimal computer.
For an arbitrary total recursive function $f\colon \N^+\to \N$,
assume that $\sum_{n=1}^\infty 2^{-f(n)}<\infty$.
In the case of $\alpha\in\Q$,
the result is obvious.
Thus,
in what follows,
we assume that $\alpha\notin\Q$ and therefore
the base-two expansion of
$\alpha - \lfloor \alpha \rfloor$ is unique
and contains infinitely many ones.

Since $\sum_{n=1}^\infty 2^{-f(n)}<\infty$,
there exists $d_0\in\N$ such that
$\sum_{n=1}^\infty 2^{-f(n)-d_0}\le 1$.
Hence, by the Kraft-Chaitin Theorem, i.e., Theorem~\ref{KC},
there exists a total recursive function
$g\colon \N^+\to \X$ such that
(i) the function $g$ is an injection,
(ii) the set $\{\,g(n)\mid n\in\N^+\}$ is prefix-free, and
(iii) $\abs{g(n)}=f(n)+d_0$ for all $n\in\N^+$.
On the other hand, since $\alpha$ is r.e.,
%there exists a computable sequence $\{a_k\}_{k\in\N}$ of rationals
%such that
%(i) $a_{k}<a_{k+1}$ for all $k\in\N$ and
%(ii) $\lim_{k\to\infty}a_k=\alpha$.
there exists a total recursive function $h\colon\N^+\to\Q$ such that
$h(k)\le\alpha$ for all $k\in\N^+$ and $\lim_{k\to\infty} h(k)=\alpha$.

Now,
let us consider the following computer $C$.
For each $n\in\N^+$, $p,s\in\X$ and $l\in\N$ such that
$U(p)=l$,
$g(n)ps\in\Dom C$ if and only if
(i) $\abs{g(n)ps}=n-l$ and
%(ii) $0.s<a_k-\lfloor \alpha \rfloor$ for some $k\in\N$.
(ii) $0.s<h(k)-\lfloor \alpha \rfloor$ for some $k\in\N^+$.
It is easy to see that such a computer $C$ exists.
Then, by Corollary~\ref{oc-c},
there exist an oracle deterministic Turing machine $M$ and $d\in\N$
such that, for all $n\in\N^+$,
$M^{\reste{\Dom V}{n+d}}(n)=\rest{\Dom C}{n}$,
where the finite subset $\rest{\Dom C}{n}$ of $\X$ is
represented as a finite binary string in a certain format.
We then see that,
for every $n\in\N^+$ and $s\in\X$ such that
$\abs{s}=n-\abs{g(n)}-d-\abs{d^*}$,
\begin{equation}\label{decision-condition}
  g(n)d^*s\in\Dom C
  \text{ if and only if }
  s\le \rest{\alpha}{n-\abs{g(n)}-d-\abs{d^*}},
\end{equation}
where $s$ and $\rest{\alpha}{n-\abs{g(n)}-d-\abs{d^*}}$ are
regarded as a dyadic integer.
Then, by the following procedure,
we see that
there exist an oracle deterministic Turing machine $M_1$ and $c\in\N$
such that,
for all $n\in\N^+$,
$M_1^{\reste{\Dom V}{n}}(n)=\rest{\alpha}{n-f(n)-c}$.
Note here that $\abs{g(n)}=f(n)+d_0$ for all $n\in\N^+$
and also $H(d)=\abs{d^*}$.

Given $n$ and $\reste{\Dom V}{n}$ with $n>d$,
one first checks whether $n-\abs{g(n)}-d-H(d)\le 0$ holds.
If this holds then one outputs $\lambda$.
If this does not hold,
one then calculates the finite set $\rest{\Dom C}{n-d}$
by simulating the computation of $M$
with the input $n-d$ and the oracle $\rest{\Dom V}{n}$.
Then, based on \eqref{decision-condition},
one determines $\rest{\alpha}{n-\abs{g(n)}-d-H(d)}$
by checking whether $g(n)d^*s\in\Dom C$ holds or not for each
$s\in\X$ with $\abs{s}=n-\abs{g(n)}-d-H(d)$.
This is possible since $\abs{g(n)d^*s}=n-d$
for every $s\in\X$ with $\abs{s}=n-\abs{g(n)}-d-H(d)$.
Finally, one outputs $\rest{\alpha}{n-\abs{g(n)}-d-H(d)}$.
\end{proof}

%%%%%%%%%%%%%%%%%%%%%%%%%%%%%%%%%%%%%%%%%%%%%%%%%%%%%%%
\subsection{The proof of Theorem~\ref{I-random}}
\label{proofIrandom}

%Theorem~\ref{I-random} follows from \eqref{Psi},
%and Theorem~\ref{time} and
%the Ample Excess Lemma (i.e., Theorem~\ref{AEL}) below.
In order to prove Theorem~\ref{I-random},
we need Theorem~\ref{time} and
the Ample Excess Lemma (i.e., Theorem~\ref{AEL}) below.

Let $M$ be an arbitrary deterministic Turing machine
with the input alphabet $\{0,1\}$.
We define
$L_M=
\min\{\,\abs{p}\mid p\in\X\text{ \& $M$ halts on input $p$}\,\}$
(may be $\infty$).
For any $n\ge L_M$,
we define
$T^M_n$
as the maximum running time of $M$
on all halting inputs of length at most $n$.
%For any $p\in\Dom V$,
%we define $T_M(p)$ as the running time of $M$ on the input $p$.
%For any $n\ge L_M$,
%we define
%$T^M_n$ as
%$\max\{T_M(p)\mid p\in\rest{\Dom V}{n}\}$.

\begin{theorem}\label{time}
Let $V$ be an optimal computer,
and let $M$ be a deterministic Turing machine which computes $V$.
Then
%\begin{equation*}
%  n=H(T^M_n,n)+O(1)=H(T^M_n)+O(1)
%\end{equation*}
$n=H(T^M_n,n)+O(1)=H(T^M_n)+O(1)$
for all $n\ge L_M$.
\qed
\end{theorem}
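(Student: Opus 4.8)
The plan is to route everything through the string $\rest{\Dom V}{n}$ together with $n$. This object is effectively interchangeable with the pair $(T^M_n,n)$: given $(\rest{\Dom V}{n},n)$ one runs $M$ on each of the finitely many strings of $\rest{\Dom V}{n}$ (all of which halt) and takes the largest running time to obtain $T^M_n$; conversely, given $(T^M_n,n)$ one runs $M$ on every string of length $\le n$ for $T^M_n$ steps and keeps those that halt, obtaining $\rest{\Dom V}{n}$. Hence $H(T^M_n,n)=H(\rest{\Dom V}{n},n)+O(1)$ by \eqref{Psi}, while $H(T^M_n)\le H(T^M_n,n)+O(1)$ is immediate. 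So it is enough to prove $H(\rest{\Dom V}{n},n)=n+O(1)$ and $H(T^M_n,n)=H(T^M_n)+O(1)$.

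For the upper bound the key remark is that $H(p)\le\abs p+O(1)$ for every $p\in\Dom V$; this follows from \eqref{minimal} applied to the computer $C$ with $\Dom C=\Dom V$ and $C(p)=p$. Fix $n$ and pick $q\in\Dom V$ with $\abs q\le n$ whose $M$-running time equals $T^M_n$ (such $q$ exists by the definition of $T^M_n$). If $n$ is a \emph{jump point}, meaning $T^M_n\ne T^M_{n-1}$, then every string of length $<n$ in $\Dom V$ halts in at most $T^M_{n-1}<T^M_n$ steps, so $\abs q=n$; and from the single string $q$ one computes both $n=\abs q$ and, by simulating $M$ on $q$, the running time $T^M_n$, so $H(T^M_n,n)\le H(q)+O(1)\le n+O(1)$ for every jump point $n$. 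For the lower bound I would go through the halting set: from $(\rest{\Dom V}{n},n)$ compute the finite set $F_n=\{\,s\mid H_V(s)\le n\,\}$ by running $V$ on each $p\in\rest{\Dom V}{n}$ and recording outputs and program lengths; since $V$ is optimal, $H$ and $H_V$ differ by at most a constant, so $F_n$ contains $\{\,s\mid H(s)\le n-O(1)\,\}$. Letting $s_0$ be the first string of $\X$ outside $F_n$, we get $H(s_0)>n-O(1)$, whereas $s_0$ is computable from $(\rest{\Dom V}{n},n)$, so $H(\rest{\Dom V}{n},n)\ge H(s_0)-O(1)\ge n-O(1)$.

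To extend the upper bound to all $n$ and to get $H(T^M_n,n)=H(T^M_n)+O(1)$, I would show that the non-decreasing sequence $(T^M_n)$ has plateaus of bounded length. First, the lengths occurring in $\Dom V$ are $O(1)$-dense: since $V$ is optimal, $H_V(m)=H(m)+O(1)$, and $\abs{H(m+1)-H(m)}$ is bounded (by \eqref{Psi} in both directions), so the unbounded function $H$, hence $H_V$, takes a value in every block of $O(1)$ consecutive integers, and $H_V(m)$ is realized as the length of some element of $\Dom V$. Consequently, if $[a,b]$ is a plateau of $(T^M_n)$ with $a$ a jump point, then $\rest{\Dom V}{b}$ is reconstructible from $(\rest{\Dom V}{a},a)$ plus the number $b-a$: compute $T^M_a$, and for each string of length $\le b$ decide membership in $\Dom V$ by running $M$ for $T^M_a$ steps, which is legitimate because on the plateau no halting input of length $\le b$ runs longer than $T^M_a$. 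Hence $H(\rest{\Dom V}{b},b)\le H(\rest{\Dom V}{a},a)+H(b-a)+O(1)\le a+H(b-a)+O(1)$, and comparing with the lower bound $H(\rest{\Dom V}{b},b)\ge b-O(1)$ gives $b-a\le H(b-a)+O(1)$, forcing $b-a=O(1)$ since $H(k)=o(k)$. Now any non-jump $n$ lies within $O(1)$ of the jump point $a$ starting its plateau and satisfies $T^M_n=T^M_a$, so adjoining $O(1)$ bits to the jump-point argument yields $H(\rest{\Dom V}{n},n)\le n+O(1)$ for all $n$; and from $T^M_n$ alone one computes $\ell_0$, the least length of a $V$-program whose $M$-running time is $T^M_n$, which has $\ell_0\le n$ and, since $\ell_0$ shares a plateau with $n$, also $n-\ell_0=O(1)$, so $O(1)$ further bits recover $n$ from $T^M_n$ and $H(T^M_n,n)\le H(T^M_n)+O(1)$. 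Chaining the inequalities gives $n-O(1)\le H(\rest{\Dom V}{n},n)=H(T^M_n,n)+O(1)=H(T^M_n)+O(1)\le n+O(1)$, as required.

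I expect the bounded-plateau step to be the main obstacle: the clean part of the argument only controls the $n$ at which $T^M_n$ strictly increases, and handling the remaining $n$ amounts to ruling out that the halting problem up to length $b$ is much harder than the halting problem up to a considerably shorter length $a$ when $M$ happens to be fast on the lengths in between — this is where optimality of $V$, the density of occurring lengths, and the already-proved lower bound must all be combined. The other two ingredients — the observation $H(p)\le\abs p+O(1)$ for $p\in\Dom V$, and the ``first missing string'' trick for the lower bound — are the conceptual core, and everything else is bookkeeping with \eqref{minimal} and \eqref{Psi}.
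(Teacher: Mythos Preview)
Your argument is correct; it differs from the paper's chiefly in how you establish the ``bounded plateau'' fact (equivalently, that $n$ is determined by $T^M_n$ up to $O(1)$ choices). The paper isolates this as a separate Lemma: by building a computer $D$ with $D(p)=1^{2\abs{p}+T(p)+1}$ and invoking optimality of $V$, one gets for every $p\in\Dom V$ some $q\in\Dom V$ with $\abs{q}\le\abs{p}+d$ and $T(q)>T(p)$, which immediately bounds plateau length by $d$. You instead bootstrap from the complexity lower bound: at a jump point $a$ you have $H(\rest{\Dom V}{a},a)\le a+O(1)$, and on a plateau $[a,b]$ the pair $(\rest{\Dom V}{b},b)$ is computable from $(\rest{\Dom V}{a},a)$ together with $b-a$, whence $b-a\le H(b-a)+O(1)$ by the lower bound at $b$, forcing $b-a=O(1)$. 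This is a genuinely different and rather elegant route: it avoids any explicit construction and uses only the first-missing-string lower bound plus the jump-point upper bound, at the price of being less direct. Two minor remarks: the paper's upper bound $H(T^M_n)\le n+O(1)$ is obtained in one line via the computer $C(p)=T(p)$, which is simpler than going through jump points; and your density-of-lengths paragraph is not actually used---the reconstruction on a plateau and the complexity comparison already suffice, so the ``Consequently'' there is misleading (density of $\{\abs{p}:p\in\Dom V\}$ does not by itself bound plateaus, since short programs can halt quickly).
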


Note that
Solovay \cite{Sol75} showed a similar result to Theorem~\ref{time}
for $h_n=\#\{p\in\Dom V\mid \abs{p}\le n\}$
in place of $T^M_n$.
On the other hand,
Chaitin showed a similar result to Theorem~\ref{time}
for $p\in\Dom V$ such that
$\abs{p}\le n$ and
the running time of $M$ on the input $p$ equals to $T^M_n$,
in place of $T^M_n$ (see Note in Section~8.1 of Chaitin \cite{C87b}).
%For completeness,
%we include the proof of Theorem~\ref{time}
%in Appendix~\ref{proof-time}.
We include the proof of Theorem~\ref{time}
in Appendix~\ref{proof-time}
for completeness.

Miller and Yu \cite{MY08} recently strengthened Theorem~\ref{Cr}
to the following form. 

\begin{theorem}[Ample Excess Lemma, Miller and Yu \cite{MY08}]\label{AEL}
For every $\alpha\in\R$,
$\alpha$ is weakly Chaitin random if and only if
$\sum_{n=1}^{\infty} 2^{n-H(\reste{\alpha}{n})}<\infty$.
\qed
\end{theorem}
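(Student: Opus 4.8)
The plan is to prove the two directions separately, with the forward implication essentially trivial and the reverse implication carrying all the weight. If $\sum_{n=1}^\infty 2^{n-H(\reste{\alpha}{n})}<\infty$, then the summands are in particular bounded, so there is $c\in\N$ with $2^{n-H(\reste{\alpha}{n})}\le 2^{c}$, i.e.\ $H(\reste{\alpha}{n})\ge n-c$ for all $n\in\N^+$; hence $\alpha$ is weakly Chaitin random. For the converse I would argue by contraposition: assuming $\sum_{n=1}^\infty 2^{n-H(\reste{\alpha}{n})}=\infty$, I want to show that for every $c\in\N$ there is some $n$ with $H(\reste{\alpha}{n})<n-c$, so that $\alpha$ is not weakly Chaitin random.

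The engine is a single measure estimate. Let $\mu$ be the uniform measure on $\XI$ and $F(\beta)=\sum_{n=1}^\infty 2^{n-H(\reste{\beta}{n})}$. By Tonelli's theorem,
\[
  \int F\,d\mu=\sum_{n=1}^\infty\sum_{\abs{\sigma}=n}2^{n-H(\sigma)}2^{-n}=\sum_{\sigma\in\X}2^{-H(\sigma)}\le\sum_{p\in\Dom U}2^{-\abs{p}}=\Omega_U\le 1,
\]
where the inequality uses that $\sigma\mapsto\sigma^{*}$ is an injection into the prefix-free set $\Dom U$ with $\abs{\sigma^{*}}=H(\sigma)$. Since $H$ is computable from above, $F$ is the pointwise supremum of an effective nondecreasing sequence of locally constant functions, so for each $k\in\N^+$ the set $V_k=\{\beta\in\XI\mid F(\beta)>2^{k}\}$ is, uniformly in $k$, an r.e.\ open set; by Markov's inequality $\mu(V_k)\le 2^{-k}$. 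Thus $(V_k)_{k\ge 1}$ is a Martin-L\"of test, and the assumption $F(\alpha)=\infty$ says exactly that $\alpha\in\bigcap_k V_k$.

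It remains to convert failure of this particular test into small complexity of initial segments of $\alpha$. Decompose each $V_k$ as a union of a prefix-free r.e.\ set $S_k$ of cylinders with $\sum_{\sigma\in S_k}2^{-\abs{\sigma}}=\mu(V_k)\le 2^{-k}$ (the standard splitting of an r.e.\ open set into disjoint basic clopen pieces); note $\abs{\sigma}\ge k$ for $\sigma\in S_k$. For each $k$ and each $\sigma\in S_k$ issue a Kraft--Chaitin request for a string of length $\abs{\sigma}-k+2\lceil\log_2(k+1)\rceil$, a nonnegative integer. The total requested weight is $\sum_k 2^{k-2\lceil\log_2(k+1)\rceil}\sum_{\sigma\in S_k}2^{-\abs{\sigma}}\le\sum_{k\ge 1}(k+1)^{-2}<1$, so Theorem~\ref{KC} yields a computer $C$, hence by \eqref{minimal} a constant $d\in\N$, with $H(\sigma)\le\abs{\sigma}-k+2\lceil\log_2(k+1)\rceil+d$ for every $k$ and every $\sigma\in S_k$. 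Now for each $k$, since $\alpha\in V_k$ there is $\sigma_k\in S_k$ with $\sigma_k=\reste{\alpha}{n_k}$ where $n_k=\abs{\sigma_k}$, so $H(\reste{\alpha}{n_k})-n_k\le -k+2\lceil\log_2(k+1)\rceil+d\to-\infty$. Hence no constant $c$ satisfies $H(\reste{\alpha}{n})\ge n-c$ for all $n$, so $\alpha$ is not weakly Chaitin random, completing the contrapositive.

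The main obstacle is the last paragraph: one has to (i) use that an r.e.\ open set splits effectively into a prefix-free union of cylinders of the same total measure, and (ii) merge the countably many test levels into one bounded Kraft--Chaitin request set. The logarithmic correction $2\lceil\log_2(k+1)\rceil$ is precisely the device that reconciles these two demands --- it is just enough overhead to make $\sum_k 2^{-2\lceil\log_2(k+1)\rceil}$ converge while still forcing $H(\reste{\alpha}{n_k})-n_k\to-\infty$. The measure computation and the Markov/test step are routine once the upper semicomputability of $H$ is noted.
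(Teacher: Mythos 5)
Your argument is correct, but note that the paper itself does not prove this statement: it is quoted from Miller and Yu \cite{MY08}, so the only possible comparison is with the cited source. What you give is essentially the known proof. The easy direction (boundedness of the summands) is exactly right. For the hard direction, your expectation computation $\int F\,d\mu=\sum_{\sigma\in\X}2^{-H(\sigma)}\le\Omega_U\le 1$ together with Markov's inequality and the upper semicomputability of $H$ is precisely the Miller--Yu test $V_k=\{\beta\mid F(\beta)>2^k\}$ with $\mu(V_k)\le 2^{-k}$. Where you genuinely add something is the last step: since the paper defines weak Chaitin randomness only via program-size complexity and never states the Martin-L\"of test characterization, you cannot simply say ``$\alpha$ fails a ML-test, hence is not random''; your inline Kraft--Chaitin conversion (requests of length $\abs{\sigma}-k+2\lceil\log_2(k+1)\rceil$ for $\sigma\in S_k$, total weight $\sum_{k\ge 1}(k+1)^{-2}<1$) re-proves the needed direction of the Levin--Schnorr equivalence and keeps the argument self-contained in the paper's terms; the arithmetic there checks out, and $-k+2\lceil\log_2(k+1)\rceil\to-\infty$ does give the contradiction with weak Chaitin randomness. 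Two small points deserve a sentence each: the claim $\abs{\sigma}\ge k$ for $\sigma\in S_k$ should be justified (it follows because $[\sigma]\subseteq V_k$ forces $2^{-\abs{\sigma}}\le\mu(V_k)\le 2^{-k}$, and it is what guarantees the requested lengths are nonnegative), and Theorem~\ref{KC} as stated only produces a prefix-free set of strings of prescribed lengths from a total recursive length function, so you should say explicitly that you enumerate the r.e.\ set of requests $(k,\sigma)$, apply Theorem~\ref{KC} to the resulting length sequence, and define a computer $C$ mapping the $n$th code word to the $n$th target string $\sigma$, after which \eqref{minimal} gives the constant $d$. With those routine details filled in, the proof is complete.
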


Then the proof of Theorem~\ref{I-random} is as follows.

\begin{proof}[Proof of Theorem~\ref{I-random}]
Let $\alpha$ be a real which is weakly Chaitin random.
Let $V$ be an optimal computer,
and let $M$ be a deterministic Turing machine which computes $V$.
For an arbitrary total recursive function $f\colon \N^+\to \N$,
assume that there exists an oracle deterministic Turing machine $M_0$
such that, for all $n\in\N^+$,
$M_0^{\reste{\Dom V}{n}}(n)=\rest{\alpha}{n-f(n)}$.
Note that,
given $(T_{n}^M,n)$ with $n\ge L_M$,
one can calculate the finite set $\rest{\Dom V}{n}$
by simulating the computation of $M$ with the input $p$
until at most $T^M_n$ steps,
for each $p\in\X$ with $\abs{p}\le n$.
Thus, we see that
there exists a partial recursive function
$\Psi\colon \N\times\N^+\to \X$ such that,
for all $n\ge L_M$,
$\Psi(T^M_n,n)=\rest{\alpha}{n-f(n)}$.
It follows from \eqref{Psi} that
$H(\rest{\alpha}{n-f(n)})
\le
H(T^M_n,n)+O(1)$
for all $n\ge L_M$.
Thus, by Theorem~\ref{time} we have
\begin{equation}\label{complexity-bound}
  H(\rest{\alpha}{n-f(n)})\le n +O(1)
\end{equation}
for all $n\in\N^+$.

In the case where the function $n-f(n)$ of $n$ is bounded to the above,
there exists $c\in\N$ such that, for every $n\in\N^+$, $-f(n)\le c-n$,
and therefore $\sum_{n=1}^{\infty}2^{-f(n)}\le 2^{c}$.
Thus, in what follows, we assume that
the function $n-f(n)$ of $n$ is not bounded to the above.

%Let $g\colon \N^+\to \N$ be the total recursive function
%such that
%$g(n)=\min\{k+f(k)\mid k\in\N\ \&\ n\le k\le n+f(n)\}$.
We define
a function
$g\colon \N^+\to \Z$ 
by $g(n)=\max\{k-f(k)\mid 1\le k\le n\}$.
It follows that
the function $g$ is non-decreasing and
$\lim_{n\to\infty} g(n)=\infty$.
Thus we can choose an enumeration
$n_1,n_2,n_3, \dotsc$ of
the countably infinite set
$\{n\in\N^+\mid n\ge 2\ \&\ 0\le g(n-1)<g(n)\}$
with $n_j<n_{j+1}$.
It is then easy to see that
%(i) $g(n_{j})=n_{j}-f(n_{j})$ for all $j$,
%and (ii) $1\le n_{j}-f(n_{j})<n_{j+1}-f(n_{j+1})$ for all $j$.
$g(n_{j})=n_{j}-f(n_{j})$
and $1\le n_{j}-f(n_{j})<n_{j+1}-f(n_{j+1})$ hold for all $j$.
On the other hand,
since $\alpha$ is weakly Chaitin random,
using the Ample Excess Lemma, i.e., Theorem~\ref{AEL},
we have $\sum_{n=1}^{\infty} 2^{n-H(\reste{\alpha}{n})}<\infty$.
Thus, using \eqref{complexity-bound}
we see that
\begin{equation}\label{subseqinfty}
  \sum_{j=1}^{\infty} 2^{-f(n_{j})}
  \le
  \sum_{j=1}^{\infty} 2^{n_{j}-f(n_{j})-H(\reste{\alpha}{n_{j}-f(n_{j})})+O(1)}
  \le
  \sum_{n=1}^{\infty} 2^{n-H(\reste{\alpha}{n})+O(1)}
  <\infty. 
\end{equation}

On the other hand,
it is easy to see that
(i) $g(n)\ge n-f(n)$ for every $n\in\N^+$, and
(ii) $g(n)=g(n_{j})$
for every $j$ and $n$ with $n_{j}\le n<n_{j+1}$.
Thus, for each $k\ge 2$, it is shown that
\begin{equation*}
\begin{split}
  \sum_{n=n_{1}}^{n_{k}-1}2^{-f(n)}
  &\le
  \sum_{n=n_{1}}^{n_{k}-1}2^{g(n)-n}
  =
  \sum_{j=1}^{k-1}\sum_{n=n_{j}}^{n_{j+1}-1}2^{g(n)-n}
  =
  \sum_{j=1}^{k-1}2^{g(n_{j})}
  \sum_{n=n_{j}}^{n_{j+1}-1}2^{-n}\\
  &=
  \sum_{j=1}^{k-1}2^{n_{j}-f(n_{j})}
  2^{-n_{j}+1}\left(1-2^{-n_{j+1}+n_{j}}\right)
  <
  2\sum_{j=1}^{k-1}2^{-f(n_{j})}.
\end{split}
\end{equation*}
Thus, 
using \eqref{subseqinfty}
we see that
$\lim_{k\to\infty}\sum_{n=n_{1}}^{n_{k}-1}2^{-f(n)}<\infty$.
%, which implies that $\sum_{n=1}^{\infty}2^{-f(n)}<\infty$.
Since $2^{-f(n)}>0$ for all $n\in\N^+$
and $\lim_{j\to\infty}n_{j}=\infty$,
we have $\sum_{n=1}^{\infty}2^{-f(n)}<\infty$.
%This completes the proof.
\end{proof}

%%%%%%%%%%%%%%%%%%%%%%%%%%%%%%%%%%%%%%%%%%%%%%%%%%%%%%%%%%%%%%%%%%%%%%%%%%%
%\section{Main result II}
\section{\boldmath
Elaboration II of the Turing reduction $\Omega_U\le_T\Dom U$}
\label{main result II}

\begin{theorem}[main result II]\label{main2}
Let $V$ and $W$ be optimal computers,
and let $f\colon \N^+\to \N$ be a total recursive function.
Then the following two conditions are equivalent:
\begin{enumerate}
\item There exist an oracle deterministic Turing machine $M$
  and $c\in\N$ such that,
  for infinitely many $n\in\N^+$,
  $M^{\reste{\Dom V}{n}}(n)=\rest{\Omega_W}{n-f(n)-c}$.
\item The function $f$ is not bounded to the above.\qed
\end{enumerate}
\end{theorem}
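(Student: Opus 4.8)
The plan is to deduce Theorem~\ref{main2} from two ``infinitely often'' analogues of Theorem~\ref{I-re} and Theorem~\ref{I-random}, instantiated at $\alpha=\Omega_W$, which is an r.e.\ real and is weakly Chaitin random. Thus I would prove: (a) if $\alpha$ is r.e.\ and $f$ is unbounded, then there are an oracle deterministic Turing machine $M$ and $c\in\N$ with $M^{\reste{\Dom V}{n}}(n)=\rest{\alpha}{n-f(n)-c}$ for infinitely many $n$; and (b) if $\alpha$ is weakly Chaitin random and $M^{\reste{\Dom V}{n}}(n)=\rest{\alpha}{n-f(n)}$ holds for some oracle deterministic Turing machine $M$ and infinitely many $n$, then $f$ is unbounded. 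Part (a) gives the implication (2)$\Rightarrow$(1), and part (b), applied with $f$ replaced by the total recursive function $f(\cdot)+c$, gives (1)$\Rightarrow$(2).

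For part (a) I would first use that $f$ is total recursive and unbounded to compute a strictly increasing sequence $n_1<n_2<\dots$ with $f(n_j)\ge j$: for each $j$, search for the least $n>n_{j-1}$ with $f(n)\ge j$, which exists since $f$ is unbounded, so $\{n_j\}$ is a recursive set. Then $\sum_{j\ge1}2^{-f(n_j)}\le\sum_{j\ge1}2^{-j}\le1$, so the Kraft--Chaitin Theorem (Theorem~\ref{KC}) applies to the function $j\mapsto f(n_j)$ and yields a recursive injection $g$ with $\{g(j)\mid j\in\N^+\}$ prefix-free and $\abs{g(j)}=f(n_j)$. I would then repeat the construction of the computer $C$ in the proof of Theorem~\ref{I-re}, but with the blocks indexed by $j$ and with $n_j$ playing the role of the length parameter: put $g(j)ps\in\Dom C$ iff $\abs{g(j)ps}=n_j-l$ (where $U(p)=l$) and $0.s<h(k)-\lfloor\alpha\rfloor$ for some $k$, where $h$ recursively approximates $\alpha$ from below (and $\alpha$ is irrational, hence has a unique expansion, because it is weakly Chaitin random). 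Corollary~\ref{oc-c} then supplies an oracle deterministic Turing machine and a constant $d$ computing $\rest{\Dom C}{n}$ from $\rest{\Dom V}{n+d}$, and the same decoding as in the proof of Theorem~\ref{I-re}, carried out at the arguments $n=n_j$, recovers $\rest{\alpha}{n_j-f(n_j)-d-\abs{d^*}}$ from the oracle $\rest{\Dom V}{n_j}$; on every other input the machine outputs $\lambda$. With $c=d+\abs{d^*}$ this is exactly condition~(1) holding at the infinitely many arguments $n_j$.

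For part (b) I would argue the contrapositive: assume $f$ is bounded, say $f\le b$, and suppose for contradiction that $M^{\reste{\Dom V}{n}}(n)=\rest{\alpha}{n-f(n)}$ for all $n$ in an infinite set $S$. Fix a deterministic Turing machine $N$ computing $V$. For $n\ge L_N$ the set $\rest{\Dom V}{n}$ is computable from $(T^N_n,n)$, so there is a partial recursive $\Psi$ with $\Psi(T^N_n,n)=\rest{\alpha}{n-f(n)}$ for all $n\in S$ with $n\ge L_N$; by \eqref{Psi} and Theorem~\ref{time} this gives $H(\rest{\alpha}{n-f(n)})\le H(T^N_n,n)+O(1)=n+O(1)$ for such $n$. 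Writing $m=n-f(n)$ and using $n\le m+b$, we get $m-H(\rest{\alpha}{m})\ge -b-O(1)$, so $2^{\,m-H(\reste{\alpha}{m})}$ exceeds a fixed positive constant for infinitely many distinct values of $m$ (distinctness since $m=n-f(n)\to\infty$ along $S$). Hence $\sum_{m\ge1}2^{\,m-H(\reste{\alpha}{m})}=\infty$, contradicting the Ample Excess Lemma (Theorem~\ref{AEL}), because $\alpha$ is weakly Chaitin random.

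I expect part (b) to be the delicate step. The naive bound $H(\rest{\alpha}{n-f(n)})\le n+O(1)$ combined only with weak Chaitin randomness, $H(\rest{\alpha}{m})\ge m-O(1)$, yields merely $f(n)\ge-O(1)$, which is vacuous; the argument must instead exploit the sharper summability furnished by the Ample Excess Lemma, exactly as in the proof of Theorem~\ref{I-random}, and one must check that the exponents $m=n-f(n)$ range over an infinite set once $f$ is bounded. In part (a), by contrast, the only new ingredient beyond the proof of Theorem~\ref{I-re} is the passage to the recursive subsequence $(n_j)$, along which $\sum_j 2^{-f(n_j)}<\infty$ holds automatically; the remainder of that proof then transfers after this re-indexing.
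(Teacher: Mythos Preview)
Your decomposition into parts (a) and (b) is exactly the paper's: Theorem~\ref{main2} is derived from Theorem~\ref{II-re} (your (a)) and Theorem~\ref{II-random} (your (b)), applied with $\alpha=\Omega_W$. Both of your arguments are correct, but the implementations differ from the paper's in interesting ways.

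For (a), the paper does not pass to a recursive subsequence and invoke Kraft--Chaitin. Instead it first proves Lemma~\ref{H-unboundedf}: if $f$ is total recursive and unbounded then $H(n)\le f(n)$ for infinitely many $n$. It then builds a computer $C$ with $pqs\in\Dom C$ (for $U(p)=n$, $U(q)=l$) iff $\abs{pqs}=n-l$ and $0.s<g(k)-\lfloor\alpha\rfloor$ for some $k$; the decoding machine, given $n$ and $\rest{\Dom V}{n}$, searches for some $p$ with $U(p)=n$ and $\abs{p}\le f(n)$, which exists for infinitely many $n$ by the lemma. Your Kraft--Chaitin subsequence approach avoids the detour through $H(n)$ and is arguably more direct; the paper's approach has the feature that the auxiliary computer $C$ is defined uniformly, independent of $f$, with the dependence on $f$ entering only in the decoding step.

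For (b), your contrapositive via the Ample Excess Lemma is valid (and your check that the values $m=n-f(n)$ range over an infinite set is exactly what is needed), but the paper takes a shorter route using only Theorem~\ref{Cr}. After obtaining $H(\rest{\alpha}{n-f(n)})\le n+O(1)$ on the infinite set $S$ exactly as you do, the paper either observes that $f$ is trivially unbounded when $n-f(n)$ is bounded on $S$, or else picks $n_j\in S$ with $n_j-f(n_j)$ strictly increasing and rewrites the bound as $H(\rest{\alpha}{n_j-f(n_j)})-(n_j-f(n_j))\le f(n_j)+O(1)$; since $\alpha$ is Chaitin random the left side tends to $\infty$, forcing $f(n_j)\to\infty$. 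So while you reach for the full strength of Theorem~\ref{AEL} as in the proof of Theorem~\ref{I-random}, in the ``infinitely often'' setting the weaker Theorem~\ref{Cr} already suffices.
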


The proof of Theorem~\ref{main2} is given
in Subsection~\ref{proofII} below.
%As a result of Theorem~\ref{main2},
%we obtain the following.
By setting $f(n)=0$ and $W=V$ in Theorem~\ref{main2},
we obtain the following.

\begin{corollary}\label{cor}
Let $V$ be an optimal computer.
Then, for every $c\in\N$,
there does not exist an oracle deterministic Turing machine $M$ such that,
for infinitely many $n\in\N^+$,
$M^{\reste{\Dom V}{n+c}}(n)=\rest{\Omega_V}{n}$.
\qed
\end{corollary}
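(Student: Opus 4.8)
The plan is to obtain Corollary~\ref{cor} as an immediate specialization of Theorem~\ref{main2} to $W=V$ and $f\equiv 0$, the only real work being to absorb the shift $n\mapsto n+c$ in the oracle bound into the oracle machine. First I would argue by contradiction: suppose that for some fixed $c\in\N$ there is an oracle deterministic Turing machine $M$ with $M^{\reste{\Dom V}{n+c}}(n)=\rest{\Omega_V}{n}$ for infinitely many $n\in\N^+$. Define a new oracle deterministic Turing machine $M'$ which, on input $m$ with oracle $X$, outputs $\lambda$ when $m\le c$ and otherwise runs $M$ on input $m-c$ with the very same oracle $X$ and copies $M$'s output (here $c$ is hard-wired into $M'$). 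Since $M$ on input $n=m-c$ only consults its oracle about strings of length at most $n+c=m$, handing $M'$ the oracle $\reste{\Dom V}{m}$ supplies $M$ with exactly the oracle $\reste{\Dom V}{(m-c)+c}$ it was built to use; hence $M'^{\reste{\Dom V}{m}}(m)=M^{\reste{\Dom V}{m}}(m-c)$ for every $m>c$.

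Next I would check that $(M',c)$ witnesses condition~(i) of Theorem~\ref{main2} with the optimal computers $V$ and $W:=V$ and the total recursive function $f\equiv 0$. Indeed, for each of the infinitely many indices $n$ for which the hypothesis on $M$ holds, put $m=n+c$; then $M'^{\reste{\Dom V}{m}}(m)=M^{\reste{\Dom V}{m}}(m-c)=\rest{\Omega_V}{m-c}=\rest{\Omega_V}{m-f(m)-c}$, and these $m$ are infinitely many. So condition~(i) of Theorem~\ref{main2} holds, and by the equivalence stated there, condition~(ii) must hold, i.e.\ $f\equiv 0$ is not bounded to the above. This is absurd, and the contradiction establishes the corollary. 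Equivalently, one may phrase this non-contradictively: Theorem~\ref{main2} with $f\equiv 0$, $W=V$ directly gives that there are \emph{no} $M$ and $c'$ with $M^{\reste{\Dom V}{n}}(n)=\rest{\Omega_V}{n-c'}$ infinitely often, and the reindexing above shows any $M$ as in the corollary would furnish such a pair.

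I do not expect a genuine obstacle here, since Theorem~\ref{main2} carries all the weight; the two things to get right are both routine. One is the bookkeeping on oracle lengths: the oracle $\reste{\Dom V}{m}$ that $M'$ receives must be at least as informative as the oracle $\reste{\Dom V}{n+c}$ that $M$ expects, and this holds with equality precisely because $m=n+c$. The other is matching the quantifier structure: ``for every $c$, no $M$ works'' is exactly the negation of ``there exist $M$ and $c$ that work,'' i.e.\ the failure of condition~(i) of Theorem~\ref{main2}, so no separate argument over $c$ is needed beyond the single reindexing step.
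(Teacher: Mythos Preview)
Your proposal is correct and follows essentially the same approach as the paper, which simply states that the corollary follows by setting $f(n)=0$ and $W=V$ in Theorem~\ref{main2}; you have merely made the reindexing $m=n+c$ explicit. One small remark: your claim that $M$ ``only consults its oracle about strings of length at most $n+c$'' is unnecessary (and not guaranteed for an arbitrary oracle machine)---the equality $M'^{\reste{\Dom V}{m}}(m)=M^{\reste{\Dom V}{m}}(m-c)$ is immediate from your definition of $M'$, and $\reste{\Dom V}{m}=\reste{\Dom V}{(m-c)+c}$ is a tautology.
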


Note that, as a variant of Theorem~\ref{main2},
we can prove the following theorem as well,
in a similar manner to the proof of Theorem~\ref{main2}.

\begin{theorem}[variant of the main result II]\label{variant-main2}
Let $V$ and $W$ be optimal computers,
and let $f\colon \N^+\to \N$ be a total recursive function.
Then the following two conditions are equivalent:
\begin{enumerate}
\item There exist an oracle deterministic Turing machine $M$
  and $c\in\N$ such that,
  for infinitely many $n\in\N^+$,
  $M^{\reste{\Dom V}{n+f(n)+c}}(n)=\rest{\Omega_W}{n}$.
\item The function $f$ is not bounded to the above.\qed
\end{enumerate}
\end{theorem}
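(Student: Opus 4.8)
The plan is to mimic the proof of Theorem~\ref{main2}, adjusting the bookkeeping so that the oracle length is inflated by $f(n)+c$ rather than the output length being deflated. For the direction (ii)$\Rightarrow$(i), suppose $f$ is unbounded. Since $\Omega_W$ and $\Dom W$ are Turing equivalent, there is an oracle machine $N$ and a constant $d$ with $N^{\reste{\Dom W}{n+d}}(n)=\rest{\Omega_W}{n}$ for all $n$ (this is Corollary~\ref{oc-c} applied with $C=W$, combined with the fact that $\rest{\Dom W}{n}$ lets one compute $\rest{\Omega_W}{n-O(1)}$ by enumerating halting programs — essentially Fact~\ref{Chaitin} in effective form). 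Separately, Theorem~\ref{weaksim} / Corollary~\ref{oc-c} gives an oracle machine computing $\rest{\Dom W}{m}$ from $\reste{\Dom V}{m+d'}$ for a constant $d'$. Composing, we get $\rest{\Omega_W}{n}$ from $\reste{\Dom V}{n+d+d'}$; since $f$ is unbounded and nonnegative, for infinitely many $n$ we have $f(n)+c\ge d+d'$ once $c$ is chosen appropriately, and $\reste{\Dom V}{n+d+d'}$ is a subset of (hence computable from) $\reste{\Dom V}{n+f(n)+c}$. This yields condition (i). I expect this direction to be essentially routine given the earlier machinery.

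For the harder direction (i)$\Rightarrow$(ii), I would argue the contrapositive: assume $f$ is bounded, say $f(n)\le b$ for all $n$, and derive a contradiction with (i). Under (i) there is an oracle machine $M$ and $c$ such that $M^{\reste{\Dom V}{n+f(n)+c}}(n)=\rest{\Omega_W}{n}$ for infinitely many $n$; since $f(n)+c\le b+c=:c'$, the oracle $\reste{\Dom V}{n+f(n)+c}$ is contained in $\reste{\Dom V}{n+c'}$, so we may feed $M$ the larger oracle $\reste{\Dom V}{n+c'}$ and (supplying also the true value of $f(n)+c$, which is one of finitely many possibilities, or simply padding) obtain an oracle machine $M'$ with $M'^{\reste{\Dom V}{n+c'}}(n)=\rest{\Omega_W}{n}$ for infinitely many $n$. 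But this is exactly the situation forbidden by Corollary~\ref{cor} applied to $W$ — wait, Corollary~\ref{cor} is stated for $\Omega_V$ with the same $V$. So instead I would invoke the general principle behind Theorem~\ref{main2}: the implication (i)$\Rightarrow$(ii) of Theorem~\ref{main2}, in contrapositive form, says that if $f$ is bounded then no oracle machine satisfies $M^{\reste{\Dom V}{n}}(n)=\rest{\Omega_W}{n-f(n)-c}$ for infinitely many $n$. Re-indexing $n\mapsto n+c'$ converts our hypothesis about $M'$ into precisely a hypothesis of that forbidden shape (with a possibly different bounded function and constant), giving the contradiction.

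The main obstacle I anticipate is the re-indexing sleight of hand in the last step: one must check carefully that shifting the argument of the oracle machine by a constant $c'$ and correspondingly shifting the index set of the "infinitely many $n$" genuinely reduces the inflated-oracle statement $M^{\reste{\Dom V}{n+f(n)+c}}(n)=\rest{\Omega_W}{n}$ to the deflated-output statement ruled out by Theorem~\ref{main2}, keeping track of the fact that $\rest{\Omega_W}{\cdot}$ on index $m$ versus $m-(\text{const})$ differ only by a computable truncation, so the resulting function in the Theorem~\ref{main2} statement is still total recursive and bounded. Once that translation is set up cleanly, the rest follows formally, and it parallels exactly how Corollary~\ref{cor} was extracted from Theorem~\ref{main2}. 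I would therefore present this as a short lemma-style reduction "(i) of Theorem~\ref{variant-main2} implies (i) of Theorem~\ref{main2} with a bounded $f$" and then quote Theorem~\ref{main2}, rather than redoing the Ample Excess Lemma argument from scratch.
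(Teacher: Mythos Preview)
Your direction (i)$\Rightarrow$(ii) is fine: bounding $f$ by $b$, replacing the oracle by the larger set $\reste{\Dom V}{n+b+c}$ (and internally restricting queries to length $\le n+f(n)+c$, which is computable), and re-indexing $m=n+b+c$ genuinely reduces to the situation forbidden by Theorem~\ref{II-random} (equivalently, condition (i) of Theorem~\ref{main2} with the bounded function $f\equiv 0$). That reduction is clean.

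The direction (ii)$\Rightarrow$(i), however, is based on a false premise. You assert that ``there is an oracle machine $N$ and a constant $d$ with $N^{\reste{\Dom W}{n+d}}(n)=\rest{\Omega_W}{n}$ for all $n$,'' claiming this is Fact~\ref{Chaitin} ``in effective form.'' It is not: Fact~\ref{Chaitin} goes from bits of $\Omega$ to halting information, not the other way. The converse direction---computing $\rest{\Omega_W}{n}$ from $\reste{\Dom W}{n+O(1)}$---is \emph{exactly} what Corollary~\ref{cor} (applied with $V=W$) shows to be impossible, even for infinitely many $n$. So your composition ``$\reste{\Dom V}{n+d'+d}\rightsquigarrow\reste{\Dom W}{n+d}\rightsquigarrow\rest{\Omega_W}{n}$'' collapses at the second arrow, and nothing in the argument survives.

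The intended route (``in a similar manner to the proof of Theorem~\ref{main2}'') is to re-run the construction of Theorem~\ref{II-re} with the length bookkeeping shifted to the oracle side: build a computer $C$ whose domain encodes approximations to $\alpha=\Omega_W$ in strings of length roughly $n+f(n)$ (rather than $n$), so that $\reste{\Dom C}{n+f(n)+O(1)}$---which by Corollary~\ref{oc-c} is obtainable from $\reste{\Dom V}{n+f(n)+c}$---already determines $\rest{\alpha}{n}$. The role of unboundedness of $f$ is the same as before, via Lemma~\ref{H-unboundedf}: it guarantees that for infinitely many $n$ there is a program $p$ for $n$ with $\lvert p\rvert\le f(n)$, so that the $f(n)$ extra oracle bits suffice to accommodate $p$ inside the encoding and leave at least $n$ bits of $s$ to pin down $\rest{\alpha}{n}$. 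No black-box reduction to Theorem~\ref{main2} seems to give this direction; you really do need to reopen the construction.
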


%%%%%%%%%%%%%%%%%%%%%%%%%%%%%%%%%%%%%%%%%%%%%%%%%%%%%%%
\subsection{The proof of Theorem~\ref{main2}}
\label{proofII}

Theorem~\ref{main2} follows from
Theorem~\ref{II-re} and Theorem~\ref{II-random} below,
and Theorem~\ref{re-Omega}.

\begin{theorem}\label{II-re}
Let $\alpha$ be an r.e.~real,
and let $V$ be an optimal computer.
For every total recursive function $f\colon \N^+\to \N$,
if the function $f$ is not bounded to the above,
then there exist an oracle deterministic Turing machine $M$ and $c\in\N$
such that,
for infinitely many $n\in\N^+$,
$M^{\reste{\Dom V}{n}}(n)=\rest{\alpha}{n-f(n)-c}$.
\qed
\end{theorem}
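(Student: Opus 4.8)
The plan is to derive Theorem~\ref{II-re} from Theorem~\ref{I-re} by replacing $f$ with a total recursive function $f'$ for which $\sum_{n=1}^\infty 2^{-f'(n)}<\infty$, yet which still agrees with $f$ on an infinite set of arguments.

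First I would exploit the unboundedness of $f$ to extract a sparse recursive ``skeleton'' of arguments on which $f$ is large. Put $j_0=0$ and, recursively, let $j_i$ be the least $n\in\N^+$ with $n>j_{i-1}$ and $f(n)\ge i$. The key observation is that this search always terminates: since $f$ is not bounded above while $\{1,\dots,j_{i-1}\}$ is finite, the restriction of $f$ to $\{n\in\N^+\mid n>j_{i-1}\}$ is still unbounded, so some such $n$ exists. Hence $i\mapsto j_i$ is a total recursive, strictly increasing function, and $J=\{j_i\mid i\in\N^+\}$ is an infinite subset of $\N^+$ with $f(j_i)\ge i$ for every $i$.

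Next I would define $f'\colon\N^+\to\N$ by $f'(n)=f(n)$ for $n\in J$ and $f'(n)=n$ otherwise. This $f'$ is total recursive, since on input $n$ one may compute $j_1,j_2,\dots$ until $j_i\ge n$ and then test whether $n$ occurs among them. Moreover $\sum_{n\in J}2^{-f'(n)}=\sum_{i=1}^\infty 2^{-f(j_i)}\le\sum_{i=1}^\infty 2^{-i}=1$ and $\sum_{n\notin J}2^{-f'(n)}\le\sum_{n=1}^\infty 2^{-n}=1$, so $\sum_{n=1}^\infty 2^{-f'(n)}<\infty$. Applying Theorem~\ref{I-re} to $\alpha$, $V$, and $f'$ then yields an oracle deterministic Turing machine $M$ and $c\in\N$ with $M^{\reste{\Dom V}{n}}(n)=\rest{\alpha}{n-f'(n)-c}$ for all $n\in\N^+$. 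Specializing to $n=j_i$, where $f'(j_i)=f(j_i)$, gives $M^{\reste{\Dom V}{j_i}}(j_i)=\rest{\alpha}{j_i-f(j_i)-c}$ for every $i$, which is exactly the required conclusion for the infinitely many $n\in J$.

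I do not anticipate a serious obstacle: the only place the hypothesis genuinely enters is the termination of the recursion defining the $j_i$, and the remaining work is just to check that $f'$ is total recursive and that the two series above are summable. (That the hypothesis cannot be dropped is shown, e.g., by $f\equiv 0$ together with $\alpha=\Omega_V$, which is ruled out by Corollary~\ref{cor}.)
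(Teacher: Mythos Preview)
Your proof is correct and takes a genuinely different route from the paper. The paper does \emph{not} reduce to Theorem~\ref{I-re}; instead it reruns the construction of Theorem~\ref{I-re} with a twist. In the proof of Theorem~\ref{I-re}, the Kraft--Chaitin theorem supplies codes $g(n)$ of length $f(n)+d_0$; here the paper instead uses Lemma~\ref{H-unboundedf} (if $f$ is unbounded then $H(n)\le f(n)$ infinitely often) to guarantee that for infinitely many $n$ there is an actual $U$-program $p$ for $n$ of length $\le f(n)$, and it builds a computer $C$ around strings $p\,d^*\,s$ rather than $g(n)\,d^*\,s$. The oracle machine then searches for such a $p$ and, when it finds one, decodes $\rest{\alpha}{n-|p|-d-H(d)}$ from $\rest{\Dom C}{n-d}$ via Corollary~\ref{oc-c}.

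Your argument is more modular: by thinning the domain to the recursive set $J=\{j_i\}$ and padding with $f'(n)=n$ off $J$, you force the Kraft condition and invoke Theorem~\ref{I-re} as a black box. This avoids reproving anything and makes the logical dependence transparent (Theorem~\ref{II-re} is a formal corollary of Theorem~\ref{I-re}). The paper's approach, by contrast, makes explicit the complexity-theoretic reason the ``infinitely often'' version works---namely that $H(n)\le f(n)$ infinitely often---which is of independent interest and is reused elsewhere (e.g.\ in the proofs of Theorem~\ref{main3} and Theorem~\ref{variant-main3}).
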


%Theorem~\ref{II-re} can be proved using
%Lemma~\ref{H-unboundedf} below and Corollary~\ref{oc-c},
%as described below.
%See Appendix~\ref{proof-H-unboundedf}
%for the proof of Lemma~\ref{H-unboundedf}.
In order to prove Theorem~\ref{II-re},
we need Lemma~\ref{H-unboundedf} below.
It is easy to show Lemma~\ref{H-unboundedf}.
For completeness, however,
we include the proof of Lemma~\ref{H-unboundedf}
in Appendix~\ref{proof-H-unboundedf}.

\begin{lemma}\label{H-unboundedf}
Let $f\colon \N^+\to \N$ be a total recursive function.
If the function $f$ is not bounded to the above,
then $H(n)\le f(n)$ for infinitely many $n\in\N^+$.
\qed
\end{lemma}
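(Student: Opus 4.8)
The plan is to use an ``inverse'' of $f$. Since $f$ is total recursive and not bounded to the above, the function $\Psi\colon\N^+\to\N^+$ given by
\[
  \Psi(k)=\min\{\,n\in\N^+\mid f(n)\ge k\,\}
\]
is well defined and total recursive. First I would record two elementary facts about $\Psi$: it is non-decreasing (because the set $\{\,n\mid f(n)\ge k\,\}$ shrinks as $k$ grows), and it is unbounded (otherwise $f$ would attain each value $\ge k$ only within a fixed finite initial segment of $\N^+$, forcing $f$ itself to be bounded). Hence $\lim_{k\to\infty}\Psi(k)=\infty$, so for every $k_0\in\N^+$ the set $\{\,\Psi(k)\mid k\ge k_0\,\}$ is infinite.

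Next I would bound $H(\Psi(k))$. Since $\Psi$ is a (total) recursive function, \eqref{Psi} supplies a constant $c\in\N$ with $H(\Psi(k))\le H(k)+c$ for all $k\in\N^+$. On the other hand, identifying $k$ with the corresponding finite binary string, \eqref{eq: fas} gives $H(k)\le 2\abs{k}+O(1)=2\log_2 k+O(1)$, so $H(k)+c\le k$ for all sufficiently large $k$, say for all $k\ge k_0$.

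Combining these, for every $k\ge k_0$ I would obtain
\[
  H(\Psi(k))\le H(k)+c\le k\le f(\Psi(k)),
\]
where the last inequality is simply the defining property of $\Psi$. Since the set $\{\,\Psi(k)\mid k\ge k_0\,\}$ is infinite, this yields $H(n)\le f(n)$ for infinitely many $n\in\N^+$, which is exactly the assertion of the lemma.

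There is no real obstacle here and the argument is short; the only point deserving a moment's attention is the verification that $\Psi$ takes infinitely many distinct values, so that the witnesses $\Psi(k)$ with $k\ge k_0$ genuinely produce infinitely many $n$ — and that is immediate once one observes that $\Psi$ is non-decreasing and unbounded.
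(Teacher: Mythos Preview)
Your proof is correct and uses essentially the same idea as the paper's: define a recursive ``inverse'' $\Psi(k)$ picking an $n$ with $f(n)\ge k$, then combine the bound $H(\Psi(k))\le H(k)+O(1)$ from \eqref{Psi} with $H(k)\le 2\log_2 k+O(1)$ from \eqref{eq: fas}. The only cosmetic difference is that the paper argues by contradiction (assuming $f(n)<H(n)$ for all large $n$ and deriving $k<H(k)+O(1)$), whereas you give the direct version, explicitly exhibiting the infinitely many witnesses $n=\Psi(k)$.
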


%Using Lemma~\ref{H-unboundedf} and Corollary~\ref{oc-c},
%we can prove Theorem~\ref{II-re} as follows.
%Using Lemma~\ref{H-unboundedf} and Corollary~\ref{oc-c},
%Theorem~\ref{II-re} is proved as follows.
%Based on Lemma~\ref{H-unboundedf} and Corollary~\ref{oc-c},
%we can give the proof of Theorem~\ref{II-re} as follows.
Based on Lemma~\ref{H-unboundedf} and Corollary~\ref{oc-c},
Theorem~\ref{II-re} is proved as follows.

\begin{proof}[Proof of Theorem~\ref{II-re}]
Let $\alpha$ be an r.e.~real,
and let $V$ be an optimal computer.
For an arbitrary total recursive function $f\colon \N^+\to \N$,
assume that the function $f$ is not bounded to the above.
In the case of $\alpha\in\Q$,
the result is obvious.
Thus,
in what follows,
we assume that $\alpha\notin\Q$ and therefore
the base-two expansion of
$\alpha - \lfloor \alpha \rfloor$ is unique
and contains infinitely many ones.

Since the total recursive function $f$ is not bounded to the above,
by Lemma~\ref{H-unboundedf}
we see that $H(n)\le f(n)$ for infinitely many $n\in\N^+$.
Note also that $\lim_{n\to\infty} n-H(n)=\infty$.
This is because $H(n)\le 2\log_2 n+O(1)$ holds for all $n\in\N^+$
by \eqref{eq: fas}.
On the other hand, since $\alpha$ is r.e.,
%there exists a computable sequence $\{a_k\}_{k\in\N}$ of rationals
%such that
%(i) $a_{k}<a_{k+1}$ for all $k\in\N$ and
%(ii) $\lim_{k\to\infty}a_k=\alpha$.
there exists a total recursive function $g\colon\N^+\to\Q$ such that
$g(k)\le\alpha$ for all $k\in\N^+$ and $\lim_{k\to\infty} g(k)=\alpha$.

Let us consider the following computer $C$.
For each $p,q,s\in\X$ and $n,l\in\N$ such that
$U(p)=n$ and $U(q)=l$,
$pqs\in\Dom C$ if and only if
(i) $\abs{pqs}=n-l$ and
%(ii) $0.s<a_k - \lfloor \alpha \rfloor$ for some $k\in\N$.
(ii) $0.s<g(k) - \lfloor \alpha \rfloor$ for some $k\in\N^+$.
It is easy to see that such a computer $C$ exists.
Then, by Corollary~\ref{oc-c},
there exist an oracle deterministic Turing machine $M$ and $d\in\N$
such that, for all $n\in\N^+$,
$M^{\reste{\Dom V}{n+d}}(n)=\rest{\Dom C}{n}$,
where the finite subset $\rest{\Dom C}{n}$ of $\X$ is
represented as a finite binary string in a certain format.
We then see that,
for every $n\in\N^+$ and $p,s\in\X$ such that
$U(p)=n$ and $\abs{s}=n-\abs{p}-d-\abs{d^*}$,
\begin{equation}\label{decision-conditionII}
  pd^*s\in\Dom C
  \text{ if and only if }
  s\le \rest{\alpha}{n-\abs{p}-d-\abs{d^*}},
\end{equation}
where $s$ and $\rest{\alpha}{n-\abs{p}-d-\abs{d^*}}$ are
regarded as a dyadic integer.
Then,
%since $H(n)\le f(n)$ for infinitely many $n\in\N^+$,
by the following procedure,
we see that
there exist an oracle deterministic Turing machine $M_1$ and $c\in\N$
such that,
for infinitely many $n\in\N^+$,
$M_1^{\reste{\Dom V}{n}}(n)=\rest{\alpha}{n-f(n)-c}$.
Note here that $H(d)=\abs{d^*}$.

Given $n$ and $\reste{\Dom V}{n}$ with $n>d$,
one first tries to find $p\in\X$ which satisfies that
(i) $U(p)=n$, (ii) $\abs{p}\le f(n)$, and (iii) $n-\abs{p}-d-H(d)\ge 1$.
One can find such a string $p$ for the cases of infinitely many $n\in\N^+$.
This is because $H(k)\le f(k)$ holds for infinitely many $k\in\N^+$
and $\lim_{k\to\infty} k-H(k)=\infty$.
If such a string $p$ is found,
one then calculates the finite set $\rest{\Dom C}{n-d}$
by simulating the computation of $M$
with the input $n-d$ and the oracle $\rest{\Dom V}{n}$.
Then, based on \eqref{decision-conditionII},
one determines $\rest{\alpha}{n-\abs{p}-d-H(d)}$
by checking whether $pd^*s\in\Dom C$ holds or not for each
$s\in\X$ with $\abs{s}=n-\abs{p}-d-H(d)$.
This is possible since $\abs{pd^*s}=n-d$
for every $s\in\X$ with $\abs{s}=n-\abs{p}-d-H(d)$.
Finally, one calculates and outputs $\rest{\alpha}{n-f(n)-d-H(d)}$.
This is possible since $n-f(n)-d-H(d)\le n-\abs{p}-d-H(d)$.
\end{proof}

\begin{theorem}\label{II-random}
Let $\alpha$ be a real which is weakly Chaitin random,
and let $V$ be an optimal computer.
For every total recursive function $f\colon \N^+\to \N$, 
if there exists an oracle deterministic Turing machine $M$ such that,
for infinitely many $n\in\N^+$,
$M^{\reste{\Dom V}{n}}(n)=\rest{\alpha}{n-f(n)}$,
then the function $f$ is not bounded to the above.
\qed
\end{theorem}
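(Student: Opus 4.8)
The plan is to argue by contraposition, closely mirroring the proof of Theorem~\ref{I-random}. Assume that $f$ is bounded to the above, say $f(n)\le b$ for all $n\in\N^+$ with some fixed $b\in\N$; I will derive that $\alpha$ is not Chaitin random, contradicting Theorem~\ref{Cr} (note in particular that a weakly Chaitin random real is irrational, so no separate rational case arises).

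First I would fix a deterministic Turing machine $N$ which computes $V$, and recall the observation used in the proof of Theorem~\ref{I-random}: given the pair $(T^N_n,n)$ with $n\ge L_N$, one can reconstruct the finite set $\reste{\Dom V}{n}$ by simulating $N$ on every input of length at most $n$ for at most $T^N_n$ steps. Feeding this set as the oracle to the machine $M$ supplied by the hypothesis, one obtains a partial recursive function $\Psi\colon\N\times\N^+\to\X$ such that $\Psi(T^N_n,n)=\rest{\alpha}{n-f(n)}$ whenever $n\ge L_N$ and $M^{\reste{\Dom V}{n}}(n)=\rest{\alpha}{n-f(n)}$; by hypothesis there are infinitely many such $n$. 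Applying \eqref{Psi} and then Theorem~\ref{time}, we get $H(\rest{\alpha}{n-f(n)})\le H(T^N_n,n)+O(1)=n+O(1)$ for infinitely many $n\in\N^+$.

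Next I would translate this into a statement about prefixes of $\alpha$ of a fixed length. Writing $m=n-f(n)$ and using $0\le f(n)\le b$, we have $n\le m+b$, hence $H(\rest{\alpha}{m})\le m+b+O(1)=m+O(1)$; moreover, since $n$ ranges over an infinite set of integers and $f$ is bounded, $m=n-f(n)$ tends to infinity and therefore takes infinitely many distinct values. Thus there is a constant $c\in\N$ with $H(\rest{\alpha}{m})-m\le c$ for infinitely many $m\in\N^+$. This contradicts the fact that $\alpha$, being weakly Chaitin random, is Chaitin random by Theorem~\ref{Cr}, i.e., $\lim_{m\to\infty}H(\rest{\alpha}{m})-m=\infty$. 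Hence $f$ is not bounded to the above.

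The argument is essentially bookkeeping on top of Theorem~\ref{time}; the only point requiring a little care is the final step, namely verifying that the substitution $m=n-f(n)$ still yields infinitely many lengths $m$ (which is precisely where boundedness of $f$ enters) so that the clash with Chaitin randomness is genuine. One could equally well invoke the Ample Excess Lemma (Theorem~\ref{AEL}) here, as in the proof of Theorem~\ref{I-random}, but the coarser divergence statement of Theorem~\ref{Cr} already suffices, since we only need to contradict $H(\rest{\alpha}{m})-m\to\infty$.
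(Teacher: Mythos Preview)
Your argument is correct and uses the same ingredients as the paper's proof: the reconstruction of $\reste{\Dom V}{n}$ from $(T^N_n,n)$, the bound $H(\rest{\alpha}{n-f(n)})\le n+O(1)$ via Theorem~\ref{time}, and the appeal to Theorem~\ref{Cr}. The only structural difference is that the paper argues directly and splits into two cases (according to whether $n-f(n)$ is bounded on the relevant infinite set), whereas your contradiction framing, by assuming $f$ bounded from the outset, forces $n-f(n)\to\infty$ and so collapses the case analysis; this is a mild simplification rather than a genuinely different route.
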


%Theorem~\ref{II-random} can be proved using
%\eqref{Psi}, Theorem~\ref{time} and Theorem~\ref{Cr},
%as described in Appendix~\ref{detailed-proofs}.
Using \eqref{Psi}, Theorem~\ref{time} and Theorem~\ref{Cr},
we can prove Theorem~\ref{II-random} as follows.

\begin{proof}[Proof of Theorem~\ref{II-random}]
Let $\alpha$ be a real which is weakly Chaitin random.
Let $V$ be an optimal computer,
and let $M$ be a deterministic Turing machine which computes $V$.
For an arbitrary total recursive function $f\colon \N^+\to \N$,
assume that there exists an oracle deterministic Turing machine $M_0$
such that, for infinitely many $n\in\N^+$,
$M_0^{\reste{\Dom V}{n}}(n)=\rest{\alpha}{n-f(n)}$.
%Note that,
%for every $n\ge L_M$,
%given $(T^M_n,n)$
%one can calculate the finite set $\rest{\Dom V}{n}$.
Note that,
given $(T_{n}^M,n)$ with $n\ge L_M$,
one can calculate the finite set $\rest{\Dom V}{n}$
by simulating the computation of $M$ with the input $p$
until at most $T^M_n$ steps,
for each $p\in\X$ with $\abs{p}\le n$.
Thus, we see that
there exists a partial recursive function
$\Psi\colon \N\times\N^+\to \X$ such that,
for infinitely many $n\ge L_M$,
$\Psi(T^M_n,n)=\rest{\alpha}{n-f(n)}$.
It follows from \eqref{Psi} that
$H(\rest{\alpha}{n-f(n)})
\le
H(T^M_n,n)+O(1)$
for infinitely many $n\ge L_M$.
Thus, by Theorem~\ref{time} we see that
there exists an infinite subset $S$ of $\N^+$
such that
\begin{equation}\label{complexity-boundII}
  H(\rest{\alpha}{n-f(n)})\le n +O(1)
\end{equation}
for all $n\in S$.

In the case where
the function $n-f(n)$ of $n$ is bounded to the above on $S$,
there exists $c\in\N$ such that, for every $n\in S$, $n-c\le f(n)$,
and therefore the function $f$ itself is not bounded to the above.
Thus, in what follows, we assume that
the function $n-f(n)$ of $n$ is not bounded to the above on $S$.
Thus we can choose a sequence $n_1,n_2,n_3, \dotsc$ in $S$
such that $1\le n_{j}-f(n_{j})<n_{j+1}-f(n_{j+1})$
for all $j\in\N^+$.
It follows from \eqref{complexity-boundII} that
$H(\rest{\alpha}{n_{j}-f(n_{j})})-(n_{j}-f(n_{j}))
\le f(n_{j}) +O(1)$ for all $j\in\N^+$.
On the other hand,
since $\alpha$ is weakly Chaitin random,
it follows from Theorem~\ref{Cr} that
$\lim_{n\to \infty} H(\rest{\alpha}{n})-n=\infty$.
This implies that $\lim_{j\to \infty} f(n_{j})=\infty$.
Hence, the function $f$ is not bounded to the above.
\end{proof}

%%%%%%%%%%%%%%%%%%%%%%%%%%%%%%%%%%%%%%%%%%%%%%%%%%%%%%%%%%%%%%%%%%%%%%%%%%%
%\section{Main result III}
\section{\boldmath
Elaboration of the Turing reduction $\Dom U\le_T\Omega_U$}
\label{main result III}

\begin{theorem}[main result III]\label{main3}
Let $V$ and $W$ be optimal computers,
and let $f\colon \N^+\to \N$ be a total recursive function.
Then the following two conditions are equivalent:
\begin{enumerate}
\item There exist an oracle deterministic Turing machine $M$
  and $c\in\N$ such that,
  for all $n\in\N^+$,
  $M^{\{\reste{\Omega_V}{n}\}}(n)=\rest{\Dom W}{n+f(n)-c}$,
  where the finite subset $\rest{\Dom W}{n+f(n)-c}$ of $\X$ is
  represented as a finite binary string in a certain format.
\item The function $f$ is bounded to the above.\qed
\end{enumerate}
\end{theorem}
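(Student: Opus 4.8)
The plan is to prove the two implications separately. For the implication (ii)$\Rightarrow$(i), suppose $f(n)\le c_0$ for all $n$. Given the oracle $\{\reste{\Omega_V}{n}\}$ the machine first recovers the string $\rest{\Omega_V}{n}$, hence $n$, by querying strings in order of length until it gets a positive answer; it then runs Chaitin's procedure, enumerating $\Dom V$ and accumulating $\sum 2^{-\abs{p}}$ until the partial sum first reaches $0.\rest{\Omega_V}{n}$. Since $\Omega_V$ is irrational this eventually happens, and at that moment every $p\in\Dom V$ with $\abs{p}\le n$ has already appeared (an omitted such $p$ would leave a gap of at least $2^{-n}$, contradicting $\Omega_V<0.\rest{\Omega_V}{n}+2^{-n}$), so $\rest{\Dom V}{n}$ is obtained. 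By Corollary~\ref{oc-c} applied with the optimal computer $V$ and the computer $W$, from $\rest{\Dom V}{n}$ one computes $\rest{\Dom W}{n-d}$ for a fixed $d\in\N$. Taking $c:=c_0+d$ we have $n+f(n)-c\le n-d$, so $\rest{\Dom W}{n+f(n)-c}$ is obtained from $\rest{\Dom W}{n-d}$ by discarding the overlong strings (outputting $\emptyset$ for the finitely many small $n$ where $n\le d$ or $n+f(n)-c<0$).

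For the implication (i)$\Rightarrow$(ii), assume $M,c$ are as in (i) and, for contradiction, that $f$ is unbounded. Fix a deterministic Turing machine $M_W$ computing $W$. The basic upper bound comes from Theorem~\ref{time}: from $\rest{\Dom W}{m}$ one recovers $T^{M_W}_m$ by running to completion every (halting) input listed in it and taking the maximal running time. Since $n$, and hence $n+f(n)-c$, is recoverable from the string $\rest{\Omega_V}{n}$ by simulating $M$ with oracle $\{\rest{\Omega_V}{n}\}$ on input $n$, there is a partial recursive function $\Psi\colon\X\to\X$ with $\Psi(\rest{\Omega_V}{n})=T^{M_W}_{n+f(n)-c}$ for all large $n$. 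By \eqref{Psi} and Theorem~\ref{time} this gives, for all sufficiently large $n$, $n+f(n)-c\le H(T^{M_W}_{n+f(n)-c})+O(1)\le H(\rest{\Omega_V}{n})+O(1)$, i.e.\ $H(\rest{\Omega_V}{n})\ge n+f(n)-O(1)$.

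To turn this into the bound ``$f$ bounded'' I would bootstrap the reduction. Combining (i) with Corollary~\ref{oc-c} (to replace $\Dom W$ by $\Dom V$ up to an additive constant) and then with Theorem~\ref{I-re} applied to the r.e.\ real $\Omega_V$ and a fixed $g$ with $\sum_n 2^{-g(n)}<\infty$ (to convert halting information for $V$ back into bits of $\Omega_V$), one obtains, uniformly in $n$, an oracle machine computing $\rest{\Omega_V}{\psi(n)}$ from $\rest{\Omega_V}{n}$, where $\psi(n)=n+f(n)-g(n+f(n))-O(1)$; iterating such a machine along a strictly increasing orbit would compute $\Omega_V$, contradicting its non-computability, and by Theorem~\ref{re-Omega} the whole argument is naturally phrased as a lemma about an arbitrary r.e.\ weakly Chaitin random real in place of $\Omega_V$. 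The main obstacle is precisely this last step: the overhead $g$ in the ``halting information $\to\Omega$'' direction cannot be made $O(1)$ (by Theorem~\ref{I-random}), so the naive iteration only refutes $f$ that grows faster than a logarithm, whereas (ii) demands $f=O(1)$. Closing this gap is where the real work lies, and I expect it to require combining the inequality $H(\rest{\Omega_V}{n})\ge n+f(n)-O(1)$ with the Ample Excess Lemma (Theorem~\ref{AEL}) for $\Omega_V$ — which forces $\sum_n 2^{\,n-H(\rest{\Omega_V}{n})}<\infty$, hence $H(\rest{\Omega_V}{n})-n\to\infty$ — together with a more careful iteration that extracts at least one extra bit of $\Omega_V$ per step from an unbounded $f$, after which the contradiction with the non-computability of $\Omega_V$ is immediate.
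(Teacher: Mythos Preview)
Your (ii)$\Rightarrow$(i) direction is correct and is essentially the paper's argument; the paper just packages ``Chaitin's procedure followed by Corollary~\ref{oc-c}'' as a single statement, Theorem~\ref{OmegaVhaltC}.

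For (i)$\Rightarrow$(ii) you correctly obtain the key inequality $H(\rest{\Omega_V}{n})\ge n+f(n)-O(1)$ (the paper gets it slightly differently, by producing a string $s$ outside the range of $W$ on $\rest{\Dom W}{n+f(n)-c}$ rather than by invoking Theorem~\ref{time}, but both routes are fine). The gap is everything after that. Your bootstrapping/iteration plan cannot be salvaged with the tools you name: the Ample Excess Lemma gives $\sum_n 2^{\,n-H(\reste{\Omega_V}{n})}<\infty$, but combined with $H(\rest{\Omega_V}{n})\ge n+f(n)-O(1)$ this only says $\sum_n 2^{-f(n)+O(1)}$ dominates a convergent series, which is vacuous. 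More fundamentally, an unbounded $f$ with $H(\rest{\Omega_V}{n})\ge n+f(n)-O(1)$ is perfectly consistent with $\Omega_V$ being weakly Chaitin random and r.e.; nothing in your toolkit (Theorems~\ref{I-re}, \ref{I-random}, \ref{AEL}) distinguishes this situation from the true one. The ``one extra bit per step'' iteration cannot be made to work uniformly because, as you yourself note via Theorem~\ref{I-random}, the halting-to-$\Omega$ direction always loses an unbounded amount.

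The missing idea is external to the paper's own machinery: Miller's theorem (Theorem~\ref{Miller}) that strongly Chaitin random equals $2$-random. From $f$ unbounded, Lemma~\ref{H-unboundedf} gives $H(n)\le f(n)$ infinitely often, so $n+H(n)\le H(\rest{\Omega_V}{n})+O(1)$ infinitely often, i.e.\ $\Omega_V$ is strongly Chaitin random. Miller then says $\Omega_V$ is $2$-random, but $\Omega_V\le_T\Dom U$ forces $H_R^{\Dom U}(\rest{\Omega_V}{n})=O(\log n)$, contradicting $2$-randomness. Without invoking this equivalence (or something of comparable strength about r.e.\ reals not being $2$-random in the complexity sense), the argument does not close.
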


%The implication (i) $\Rightarrow$ (ii) of Theorem~\ref{main3}
%is proved, as described below,
%based on Lemma~\ref{H-unboundedf}, Theorem~\ref{Miller} below,
%and the fact that $\Omega_V$ is an r.e.~real.
In order to prove the implication (i) $\Rightarrow$ (ii) of
Theorem~\ref{main3},
we need Theorem~\ref{Miller} below.
For the purpose of understanding the statement of Theorem~\ref{Miller},
we concisely review some definitions and results of
the theory of relative randomness.
%first.
%in the following.
See e.g.~\cite{N09,DH09} for the detail of the theory.

An \textit{oracle computer} is an oracle deterministic Turing machine $M$
with the input and output alphabet $\{0,1\}$
such that, for every subset $A$ of $\X$,
the domain of definition of
$M^A$ is
%prefix-free.
a prefix-free set.
For each oracle computer $M$, each subset $A$ of $\X$,
and each $s \in \X$,
$H_M^A(s)$ is defined by
$H_M^A(s) =
\min
\left\{\,
  \abs{p}\,\big|\;p \in \X\>\&\>M^A(p)=s
\,\right\}$
(may be $\infty$).
%An oracle computer $R$ is said to be \textit{optimal} if
%for each oracle computer $M$ there exists $d\in\N$
%with the following property;
%for every subset $A$ of $\X$ and every $p\in\X$,
%if $p$ is in the accepting set of $M^A$,
%then there is a $q$ for which
%$R^A(q)=M^A(p)$ and $\abs{q}\le\abs{p}+d$.
An oracle computer $R$ is said to be \textit{optimal} if
for every oracle computer $M$ there exists $d\in\N$ such that,
for every subset $A$ of $\X$ and every $s\in\X$,
\begin{equation}\label{oracle-minimal}
  H_R^A(s)\le H_M^A(s)+d.
\end{equation}
It is then easy to see that there exists an optimal oracle computer.
%We choose a particular optimal oracle computer $R$
%as the standard one for use,
%and define $H^A(s)$ as $H_{R^A}(s)$
%for any subset $A$ of $\X$ and any $s\in\X$.
%It follows that
%for every oracle computer $M$ there exists $d\in\N$ such that,
%for every subset $A$ of $\X$ and every $s\in\X$,
%\begin{equation}\label{oracle-minimal}
%  H^A(s)\le H_{M^A}(s)+d.
%\end{equation}
%
%\begin{definition}
%For any $\alpha\in\R$,
%we say that $\alpha$ is \textit{$2$-random}
%if there exists $c\in\N$ such that
%%$n-c\le H^{\emptyset'}(\rest{\alpha}{n})$ for all $n\in\N^+$,
%%where $\emptyset'$ is the halting problem.
%$n-c\le H^{\Dom U}(\rest{\alpha}{n})$ for all $n\in\N^+$.
%\qed
%\end{definition}
%
%In the above definition,
%recall that
%$U$ is the optimal computer used to define $H(s)$.
%For any $\alpha\in\R$,
%we say that $\alpha$ is \textit{$2$-random}
%if there exists $c\in\N$ such that
%$n-c\le H^{\Dom U}(\rest{\alpha}{n})$ for all $n\in\N^+$.
%Recall here that
%$U$ is the optimal computer used to define $H(s)$.
For any $\alpha\in\R$,
we say that $\alpha$ is \textit{$2$-random}
if there exist an optimal oracle computer $R$ and $c\in\N$
such that
$n-c\le H_R^{\Dom U}(\rest{\alpha}{n})$ for all $n\in\N^+$.
Recall here that
$U$ is the optimal computer used to define $H(s)$.

For any $\alpha\in\R$,
we say that $\alpha$ is \textit{strongly Chaitin random}
if there exists $c\in\N$ such that,
for infinitely many $n\in\N^+$,
$n+H(n)-c\le H(\rest{\alpha}{n})$.
J. Miller recently showed the following theorem.
See \cite{DH09,N09} for the detail.

\begin{theorem}[J. Miller]\label{Miller}
For every $\alpha\in\R$,
$\alpha$ is strongly Chaitin random if and only if
$\alpha$ is $2$-random.
\qed
\end{theorem}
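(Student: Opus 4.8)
The plan is to establish the two implications separately. I will use the plain program-size complexity $C(\cdot)$ (defined from an optimal partial recursive function, as in the Remark above) together with the characterization of Nies, Stephan and Terwijn (see \cite{DH09,N09}) that a real $\alpha$ is $2$-random if and only if $C(\rest{\alpha}{n})\ge n-O(1)$ for infinitely many $n$. The appearance of $H(n)$ in the definition of strong Chaitin randomness is natural in view of Levin's symmetry-of-information identity $H(s)=H(\abs{s})+H(s\mid\abs{s}^{*})+O(1)$ (with $H(\cdot\mid\cdot)$ the conditional program-size complexity): strong Chaitin randomness of $\alpha$ is the same as $H(\rest{\alpha}{n}\mid n^{*})\ge n-O(1)$ for infinitely many $n$.

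For the implication ``strongly Chaitin random $\Rightarrow$ $2$-random'', fix $c\in\N$ and infinitely many $n$ with $H(\rest{\alpha}{n})\ge n+H(n)-c$. From the standard estimate $H(s)\le C(s)+H(C(s))+O(1)$ and the bound $C(\rest{\alpha}{n})\le n+O(1)$, setting $t=n-C(\rest{\alpha}{n})$ yields $t\le H(t)+c+O(1)\le 2\log_2 t+c+O(1)$ by \eqref{eq: fas}, hence $t\le 2c+O(1)$; therefore $C(\rest{\alpha}{n})\ge n-O(1)$ for these infinitely many $n$, and the Nies--Stephan--Terwijn characterization gives that $\alpha$ is $2$-random.

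For the converse, ``$2$-random $\Rightarrow$ strongly Chaitin random'', the same characterization first supplies infinitely many $n$ with $C(\rest{\alpha}{n})=n+O(1)$, and the essential remaining task is to upgrade this to $H(\rest{\alpha}{n})\ge n+H(n)-O(1)$ for infinitely many such $n$. This is exactly the point at which one must use that $\alpha$ is $2$-random and not merely Martin-L\"of random: $\Omega_U$ is Martin-L\"of random but, being computable relative to $\Dom U$, is not $2$-random, so Martin-L\"of randomness alone cannot force the conclusion. I would route the argument through the relativization of the Ample Excess Lemma (Theorem~\ref{AEL}), which for every $2$-random $\alpha$ gives $\sum_{n}2^{\,n-H_R^{\Dom U}(\rest{\alpha}{n})}<\infty$ for an optimal oracle computer $R$ (so that $H_R^{\Dom U}(\rest{\alpha}{n})\ge n+e$ eventually, for every $e$), together with a comparison at the lengths where $C(\rest{\alpha}{n})$ is extremal between $H_R^{\Dom U}(\sigma)$ and $H(\sigma)-H(\abs{\sigma})$ up to an additive constant; were $H(\rest{\alpha}{n})\ge n+H(n)-O(1)$ to fail at infinitely many such lengths, this comparison would push $H_R^{\Dom U}(\rest{\alpha}{n})$ below $n$ infinitely often, contradicting the series bound.

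The hard part, and the reason this theorem is nontrivial, is exactly that last comparison: relating the relativized complexity $H_R^{\Dom U}$ to $H$ and $H(\abs{\sigma})$ with an $O(1)$ error term tight enough to produce the contradiction, and similarly the construction of the Martin-L\"of test relative to $\Dom U$ that underlies the Nies--Stephan--Terwijn characterization itself. This is the substance of J.~Miller's theorem; the full details are given in \cite{DH09,N09}, to which I defer.
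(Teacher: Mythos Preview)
The paper does not prove this theorem at all: it is stated with a \qed\ immediately after the enunciation, attributed to J.~Miller, and the reader is referred to \cite{DH09,N09} for the details. There is therefore no argument in the paper to compare your attempt against.

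Your forward direction (strongly Chaitin random $\Rightarrow$ $2$-random) is essentially correct. Using $H(s)\le C(s)+H(C(s))+O(1)$ together with $H(m)\le H(n)+H(n-m)+O(1)$ does give $t\le H(t)+c+O(1)$ for $t=n-C(\rest{\alpha}{n})$ at the witnessing $n$, and since $c$ is fixed this bounds $t$ by a constant, yielding $C(\rest{\alpha}{n})\ge n-O(1)$ infinitely often and hence $2$-randomness via the Nies--Stephan--Terwijn characterization. (The specific claim ``$t\le 2c+O(1)$'' is not the right form of the bound one gets from $t\le 2\log_2 t+c+O(1)$, but this is cosmetic: what matters is that $t$ is bounded once $c$ is fixed.)

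Your reverse direction, however, is not a proof. You outline a strategy---relativized Ample Excess plus a purported comparison of $H_R^{\Dom U}(\sigma)$ with $H(\sigma)-H(\abs{\sigma})$ at lengths of maximal plain complexity---but you do not carry it out, and you end by deferring to \cite{DH09,N09}. That deferral is exactly what the paper itself does for the entire theorem. The comparison you allude to is precisely the substance of Miller's argument, and nothing you have written establishes it; in particular, there is no general inequality of the shape $H_R^{\Dom U}(\sigma)\ge H(\sigma)-H(\abs{\sigma})-O(1)$ available to plug in. So for this direction your proposal has the same status as the paper: a citation, not a proof.
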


%Based on Lemma~\ref{H-unboundedf}, Theorem~\ref{Miller},
%and the fact that $\Omega_V$ is an r.e.~real,
%we can prove the implication (i) $\Rightarrow$ (ii) of
%Theorem~\ref{main3} as follows.
%Then the proof of the implication (i) $\Rightarrow$ (ii) of
%Theorem~\ref{main3} is as follows.
The implication (i) $\Rightarrow$ (ii) of Theorem~\ref{main3}
is then proved as follows,
based on Lemma~\ref{H-unboundedf}, Theorem~\ref{Miller},
and the fact that $\Omega_V$ is an r.e.~real.

\begin{proof}[Proof of (i) $\Rightarrow$ (ii) of Theorem~\ref{main3}]
Let $V$ and $W$ be optimal computers.
For an arbitrary total recursive function $f\colon \N^+\to \N$,
assume that
there exist an oracle deterministic Turing machine $M$
and $c\in\N$ such that,
for all $n\in\N^+$,
$M^{\{\reste{\Omega_V}{n}\}}(n)=\rest{\Dom W}{n+f(n)-c}$.
Then, by considering the following procedure,
we first see that $n+f(n)<H(\rest{\Omega_V}{n})+O(1)$
for all $n\in\N^+$.

Given $\rest{\Omega_V}{n}$,
one first calculates the finite set $\rest{\Dom W}{n+f(n)-c}$
by simulating the computation of $M$
with the input $n$ and the oracle $\rest{\Omega_V}{n}$.
Then,
by calculating the set $\{\,W(p)\mid p\in\rest{\Dom W}{n+f(n)-c}\}$
and picking any one finite binary string $s$ which is not in this set,
one can obtain $s\in\X$ such that $n+f(n)-c<H_W(s)$.

Thus, there exists a partial recursive function
$\Psi\colon \X\to \X$ such that,
for all $n\in\N^+$, $n+f(n)-c<H_W(\Psi(\rest{\Omega_V}{n}))$.
It follows from the optimality of $W$ and \eqref{Psi} that
\begin{equation}\label{n+f(n)-lower-bound}
  n+f(n)<H(\rest{\Omega_V}{n})+O(1)
\end{equation}
for all $n\in\N^+$.

Now, let us assume contrarily that
the function $f$ is not bounded to the above.
Then it follows from Lemma~\ref{H-unboundedf} that
$H(n)\le f(n)$ for infinitely many $n\in\N^+$.
Combining this with \eqref{n+f(n)-lower-bound}
we see that $\Omega_V$ is strongly Chaitin random.
Thus, by Theorem~\ref{Miller},
$\Omega_V$ is $2$-random
and therefore
there exist an optimal oracle computer $R$ and $d\in\N$
such that
\begin{equation}\label{omega-2-random}
  n-d\le H_R^{\Dom U}(\rest{\Omega_V}{n})
\end{equation}
for all $n\in\N^+$.

On the other hand,
$\Omega_V\le_T \Dom U$ holds,
as shown in Theorem~\ref{main1} in a stronger form.
Thus, using \eqref{oracle-minimal}
we can show that
$H_R^{\Dom U}(\rest{\Omega_V}{n})\le 2\log_2 n+O(1)$
for all $n\in\N^+$.
However,
this contradicts \eqref{omega-2-random},
and the proof is completed.
\end{proof}

On the other hand,
in order to prove
the implication (ii) $\Rightarrow$ (i) of Theorem~\ref{main3},
we need Theorem~\ref{OmegaVhaltC} below.
Theorem~\ref{OmegaVhaltC} can be proved based on
Fact~\ref{Chaitin} and Corollary~\ref{oc-c}.
For completeness,
however,
we include
in Appendix~\ref{proof-OmegaVhaltC}
a direct and self-contained proof of Theorem~\ref{OmegaVhaltC}
without using Corollary~\ref{oc-c}.

\begin{theorem}\label{OmegaVhaltC}
Let $V$ be an optimal computer,
and let $C$ be a computer.
Then there exist an oracle deterministic Turing machine $M$
and $d\in\N$ such that,
for all $n\in\N^+$,
$M^{\{\reste{\Omega_V}{n+d}\}}(n)=\rest{\Dom C}{n}$,
where the finite subset $\rest{\Dom C}{n}$ of $\X$ is
represented as a finite binary string in a certain format.
\qed
\end{theorem}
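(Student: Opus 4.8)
The plan is to reduce the halting problem of $C$ to that of $V$, and then to run, inside the oracle machine, the classical argument behind Fact~\ref{Chaitin}: the dyadic rational $0.\reste{\Omega_V}{m}$ lies within $2^{-m}$ of $\Omega_V$ from below, so by enumerating $\Dom V$ until the accumulated weight exceeds it one recovers the \emph{exact} finite set $\reste{\Dom V}{m}$. The constant $d$ in the statement will come from the optimality of $V$ applied to a single auxiliary computer built from $C$.

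Concretely, first I would fix a deterministic Turing machine $M_C$ computing $C$ and, as in the proof of Theorem~\ref{weaksim}, an encoding $\bin_{M_C}(p)\in\X\cup\XI$ of the computation history of $M_C$ on input $p$, arranged so that $\bin_{M_C}(p)\in\X$ exactly when $p\in\Dom C$, and so that for any $w\in\X$ and $p\in\X$ it is decidable whether $w$ is the code of a halting computation history of $M_C$ on $p$. Let $D$ be the computer with $\Dom D=\Dom C$ and $D(p)=\bin_{M_C}(p)$ (a computer, since $\Dom C$ is prefix-free). By the optimality of $V$ there is one constant $d\in\N$ such that for every $p\in\Dom C$ there is $q\in\Dom V$ with $V(q)=\bin_{M_C}(p)$ and $\abs{q}\le\abs{p}+d$; this is the $d$ of the statement, independent of $n$. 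Given $\reste{\Omega_V}{n+d}$, the machine $M$ would enumerate $\Dom V$ stage by stage, keep the running sum $s_t$ of $2^{-\abs{q}}$ over the inputs found so far, and halt the enumeration as soon as $s_t>0.\reste{\Omega_V}{n+d}$ — a decidable test between dyadic rationals, which does become true at some finite stage because $\Omega_V$ is irrational (being weakly Chaitin random) and $0.\reste{\Omega_V}{n+d}\le\Omega_V<0.\reste{\Omega_V}{n+d}+2^{-(n+d)}$. At that stage the remaining weight $\Omega_V-s_t$ is below $2^{-(n+d)}$, so every input in $\Dom V$ of length at most $n+d$ has already appeared; hence $M$ obtains $\reste{\Dom V}{n+d}$ and then the finite set $S=\{\,V(q)\mid q\in\reste{\Dom V}{n+d}\,\}\subseteq\X$.

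Finally, for each $p\in\X$ with $\abs{p}\le n$, the machine $M$ would decide $p\in\Dom C$ by checking whether some $w\in S$ is a halting computation history of $M_C$ on $p$: if $p\in\Dom C$, the string $q$ furnished by optimality has $\abs{q}\le\abs{p}+d\le n+d$, so $\bin_{M_C}(p)=V(q)\in S$ and witnesses a halting history; conversely any such $w\in S$ forces $M_C$ to halt on $p$. Collecting the passing strings of length at most $n$ gives exactly $\reste{\Dom C}{n}$, to be output in the fixed format. The one point that needs care is precisely this last step: since $\Dom C$ is only recursively enumerable, the set $S$ merely over-approximates the $C$-halting inputs (it may contain outputs of $V$ not arising from the embedding), and membership in $\Dom C$ cannot be decided directly; passing through computation histories, which are finite objects of decidable validity, is what converts the over-approximation into an exact decision. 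A shorter but less self-contained alternative would simply compose Fact~\ref{Chaitin} with Theorem~\ref{weaksim} (or with Corollary~\ref{oc-c}).
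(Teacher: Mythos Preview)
Your proof is correct and follows precisely the route the paper itself flags as an alternative: recover $\rest{\Dom V}{n+d}$ exactly from $\rest{\Omega_V}{n+d}$ via the standard Chaitin argument behind Fact~\ref{Chaitin}, then apply the computation-history device of Theorem~\ref{weaksim} to decide $\rest{\Dom C}{n}$. The appendix proof deliberately avoids that machinery. It fixes a recursive enumeration $p_0,p_1,p_2,\dotsc$ of $\Dom C$ and takes as auxiliary computer $D(p_i)=i$; optimality then yields $P_V(i)\ge 2^{-\abs{p_i}-d}$, where $P_V(s)=\sum_{V(q)=s}2^{-\abs{q}}$. The machine never computes $\rest{\Dom V}{n+d}$: it searches for an index $k_e$ with $\sum_{i\le k_e}P_V(i)>0.(\rest{\Omega_V}{n+d})$, so the tail bound $\sum_{i>k_e}P_V(i)<2^{-n-d}$ forces $\sum_{i>k_e}2^{-\abs{p_i}}<2^{-n}$ and hence $\abs{p_i}>n$ for every $i>k_e$; then $\rest{\Dom C}{n}=\{\,p_i\mid i\le k_e,\ \abs{p_i}\le n\,\}$ drops out at once. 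Your route is modular and reuses the paper's own Theorem~\ref{weaksim}; the paper's route is a single self-contained pass that sidesteps computation histories and the intermediate set $\rest{\Dom V}{n+d}$ altogether.
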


%Using Theorem~\ref{OmegaVhaltC},
%we can prove the implication (ii) $\Rightarrow$ (i) of
%Theorem~\ref{main3} as follows.
Then the proof of the implication (ii) $\Rightarrow$ (i) of
Theorem~\ref{main3} is as follows.

\begin{proof}[Proof of (ii) $\Rightarrow$ (i) of Theorem~\ref{main3}]
Let $V$ and $W$ be optimal computers.
For an arbitrary total recursive function $f\colon \N^+\to \N$,
assume that the function $f$ is bounded to the above.
Then there exists $d_1\in\N$ such that $f(n)\le d_1$ for all $n\in\N^+$.
On the other hand,
by Theorem~\ref{OmegaVhaltC},
there exist an oracle deterministic Turing machine $M$
and $d_2\in\N$ such that,
for all $n\in\N^+$,
$M^{\{\reste{\Omega_V}{n+d_2}\}}(n)=\rest{\Dom W}{n}$,
where the finite subset $\rest{\Dom W}{n}$ of $\X$ is
represented as a finite binary string in a certain format.
We set $c=d_1+d_2$.
Then, by the following procedure,
we see that
there exists an oracle deterministic Turing machine $M$ such that,
for all $n\in\N^+$,
$M^{\{\reste{\Omega_V}{n}\}}(n)=\rest{\Dom W}{n+f(n)-c}$.

Given $n$ and $\rest{\Omega_V}{n}$ with $n>d_2$,
one first calculates the finite set $\rest{\Dom W}{n-d_2}$
by simulating the computation of $M$
with the input $n-d_2$ and the oracle $\{\rest{\Omega_V}{n}\}$.
One then calculates and outputs $\rest{\Dom W}{n+f(n)-c}$.
This is possible since $n+f(n)-c\le n+d_1-c=n-d_2$.
\end{proof}

Note that, as a variant of Theorem~\ref{main3},
we can prove the following theorem as well,
in a similar manner to the proof of Theorem~\ref{main3}.

\begin{theorem}[variant of the main result III]\label{variant-main3}
Let $V$ and $W$ be optimal computers,
and let $f\colon \N^+\to \N$ be a total recursive function.
Then the following two conditions are equivalent:
\begin{enumerate}
\item There exist an oracle deterministic Turing machine $M$
  and $c\in\N$ such that,
  for all $n\in\N^+$,
  $M^{\left\{\reste{\Omega_V}{n-f(n)+c}\right\}}(n)=\rest{\Dom W}{n}$,
  where the finite subset $\rest{\Dom W}{n}$ of $\X$ is
  represented as a finite binary string in a certain format.
\item The function $f$ is bounded to the above.\qed
\end{enumerate}
\end{theorem}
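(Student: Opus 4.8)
The plan is to prove the two implications of Theorem~\ref{variant-main3} in parallel with the two halves of the proof of Theorem~\ref{main3}; only one step, in the direction (i)~$\Rightarrow$~(ii), needs a small twist. For the easy direction (ii)~$\Rightarrow$~(i) I would fix $d_1\in\N$ with $f(n)\le d_1$ for all $n$, apply Theorem~\ref{OmegaVhaltC} to $W$ to obtain an oracle deterministic Turing machine $M_0$ and $d_2\in\N$ with $M_0^{\{\reste{\Omega_V}{k+d_2}\}}(k)=\rest{\Dom W}{k}$ for all $k\in\N^+$, and set $c=d_1+d_2$. Given $n$ large enough that $n-f(n)+c>d_2$, together with the oracle $\{\reste{\Omega_V}{n-f(n)+c}\}$, one computes $f(n)$ and $k:=n-f(n)+c-d_2=n-f(n)+d_1$, observes that the oracle string is exactly $\reste{\Omega_V}{k+d_2}$, simulates $M_0$ on input $k$ to get $\rest{\Dom W}{k}$, and, since $k=n-f(n)+d_1\ge n$, discards the strings of length exceeding $n$ and outputs $\rest{\Dom W}{n}$. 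The finitely many remaining $n$ are handled by a hard-wired table.

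For the direction (i)~$\Rightarrow$~(ii), write $m(n)=n-f(n)+c$ and keep the paper's convention $\reste{\Omega_V}{j}=\lambda$ for $j\le0$. From (i) one builds a partial recursive function $\Psi$ with $H_W(\Psi(n,\reste{\Omega_V}{m(n)}))>n$ for every $n$: simulate $M$ on input $n$ with oracle $\{\reste{\Omega_V}{m(n)}\}$ to get $\rest{\Dom W}{n}$, compute the finite set $\{\,W(p)\mid p\in\rest{\Dom W}{n}\,\}$, and output any string outside it. Since $W$ is optimal, $H_W=H+O(1)$, so by \eqref{Psi} together with the standard subadditivity bound $H(s,t)\le H(s)+H(t)+O(1)$ (itself a consequence of \eqref{minimal}) one obtains $H(\reste{\Omega_V}{m(n)})>n-H(n)-O(1)$ for all $n\in\N^+$. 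This is the one point at which the argument really differs from the proof of Theorem~\ref{main3}: there the oracle prefix has length exactly $n$, so $n$ can be read off the oracle and one gets the sharper bound $n+f(n)<H(\reste{\Omega_V}{n})+O(1)$, whereas here the prefix has length $m(n)\neq n$, so $n$ must be passed to $\Psi$ as a second argument and an extra term $H(n)$ is incurred. Combining the bound just obtained with $H(\reste{\Omega_V}{m(n)})\le 2\max(m(n),0)+O(1)$ and $H(n)\le 2\log_2 n+O(1)$, both from \eqref{eq: fas}, forces $m(n)\ge1$ for all large $n$ and, more precisely, $m(n)>n/2-\log_2 n-O(1)\to\infty$.

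Now assume, toward a contradiction, that $f$ is not bounded to the above, and, as in the proof of Lemma~\ref{H-unboundedf}, set $n_k=\min\{n\in\N^+\mid f(n)\ge k\}$; then $k\mapsto n_k$ is total recursive, $n_k\to\infty$, $f(n_k)\ge k$, and $H(m(n_k))\le H(n_k)+O(1)\le H(k)+O(1)\le 2\log_2 k+O(1)$ by \eqref{Psi} and \eqref{eq: fas}. Putting $n=n_k$ in the complexity bound and using $n_k=m(n_k)+f(n_k)-c$, one finds that for all large $k$
\[
  H(\reste{\Omega_V}{m(n_k)})>m(n_k)+k-2\log_2 k-O(1)\ge m(n_k)+H(m(n_k)) .
\]
Since $m(n_k)\to\infty$, this is precisely the statement that $\Omega_V$ is strongly Chaitin random. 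By Theorem~\ref{Miller}, $\Omega_V$ is then $2$-random, so $k-d\le H_R^{\Dom U}(\reste{\Omega_V}{k})$ for some optimal oracle computer $R$, some $d\in\N$, and all $k\in\N^+$. But $\Omega_V\le_T\Dom U$ (Theorem~\ref{main1}), so feeding a prefix-free encoding of $k$ to an oracle computer that computes $\reste{\Omega_V}{k}$ from the oracle $\Dom U$ yields, via \eqref{oracle-minimal}, $H_R^{\Dom U}(\reste{\Omega_V}{k})\le 2\log_2 k+O(1)$, contradicting the previous inequality for large $k$. Hence $f$ is bounded.

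The step I expect to be the main obstacle is the one flagged in the second paragraph: because the oracle-prefix length $m(n)=n-f(n)+c$ in condition~(i) is not the input $n$ itself, the reduction that extracts a string of high program-size complexity cannot recover $n$ from the oracle as it does in the proof of Theorem~\ref{main3}, and so it must receive $n$ separately, which weakens the resulting complexity estimate by an additive $H(n)$. Absorbing that loss when deducing strong Chaitin randomness needs a little more than the bare conclusion of Lemma~\ref{H-unboundedf}: one needs that, along some infinite set of arguments $n$, $H(n)$ is only logarithmic in $f(n)$, which is exactly what the witnesses $n_k=\min\{n\mid f(n)\ge k\}$ underlying that lemma supply. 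The remaining ingredients --- the subadditivity estimate, the appeal to Theorem~\ref{Miller}, and the clash with $\Omega_V\le_T\Dom U$ --- are a routine transcription of the corresponding steps in the proof of Theorem~\ref{main3}.
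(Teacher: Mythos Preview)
Your proof is correct. The easy direction (ii)~$\Rightarrow$~(i) matches the paper's template exactly (in fact your observation $n-f(n)+c\ge n+d_2>d_2$ for all $n\ge1$ shows the hard-wired table is never needed, but its inclusion is harmless).

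In the hard direction (i)~$\Rightarrow$~(ii) you and the paper diverge after the common first step $n<H(n,\reste{\Omega_V}{m(n)})+O(1)$. The paper eliminates the $H(n)$ loss altogether: it first passes to an increasing total recursive $g$ with $f\circ g$ increasing, then to a further subsequence $h$ along which $l\mapsto m(l):=g(h(l))-f(g(h(l)))+c$ is increasing; since $m$ is then injective and recursive, $g(h(l))$ can be recovered from $m(l)$ alone, giving the sharper bound $m(l)+f(g(h(l)))<H(\reste{\Omega_V}{m(l)})+O(1)$, after which a Lemma~\ref{H-unboundedf}-style argument applied to $l\mapsto f(g(h(l)))$ yields strong Chaitin randomness. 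You instead keep the $H(n)$ loss but neutralise it by choosing the explicit witnesses $n_k=\min\{n:f(n)\ge k\}$, for which both $H(n_k)$ and $H(m(n_k))$ are $O(\log k)$ while $f(n_k)\ge k$; this still forces $H(\reste{\Omega_V}{m(n_k)})\ge m(n_k)+H(m(n_k))$ for large $k$, and since $m(n_k)\to\infty$ (which you establish from the crude bound $m(n)>n/2-\log_2 n-O(1)$), the strongly Chaitin random conclusion follows. Your route is somewhat more elementary, avoiding the two-stage subsequence construction and the ``invert $m$'' trick; the paper's route gives a cleaner intermediate inequality at the cost of that extra machinery. From the point where $\Omega_V$ is shown to be strongly Chaitin random, the two proofs coincide.
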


For completeness,
we give a proof of the implication (i) $\Rightarrow$ (ii),
i.e., the difficult part,
of Theorem~\ref{variant-main3} as follows.

\begin{proof}[Proof of (i) $\Rightarrow$ (ii) of Theorem~\ref{variant-main3}]
Let $V$ and $W$ be optimal computers.
For an arbitrary total recursive function $f\colon \N^+\to \N$,
assume that
there exist an oracle deterministic Turing machine $M$
and $c\in\N$ such that,
for all $n\in\N^+$,
$M^{\left\{\reste{\Omega_V}{n-f(n)+c}\right\}}(n)=\rest{\Dom W}{n}$.
Then, by considering the following procedure,
we first see that $n<H(n,\rest{\Omega_V}{n-f(n)+c})+O(1)$
for all $n\in\N^+$.

Given $n$ and $\rest{\Omega_V}{n-f(n)+c}$,
one first calculates the finite set $\rest{\Dom W}{n}$
by simulating the computation of $M$
with the input $n$ and the oracle $\left\{\rest{\Omega_V}{n-f(n)+c}\right\}$.
Then,
by calculating the set $\{\,W(p)\mid p\in\rest{\Dom W}{n}\}$
and picking any one finite binary string $s$ which is not in this set,
one can obtain $s\in\X$ such that $n<H_W(s)$.

Thus, there exists a partial recursive function
$\Psi\colon \N^+\times\X\to \X$ such that,
for all $n\in\N^+$, $n<H_W(\Psi(n,\rest{\Omega_V}{n-f(n)+c}))$.
It follows from the optimality of $W$ and \eqref{Psi} that
\begin{equation}\label{n-f(n)-lower-bound}
  n<H(n,\rest{\Omega_V}{n-f(n)+c})+O(1)
\end{equation}
for all $n\in\N^+$.

Now, let us assume contrarily that
the function $f$ is not bounded to the above.
It is then easy to show that
there exists an increasing total recursive function
$g\colon \N^+\to \N^+$
such that the function $f(g(k))$ of $k$ is increasing.
Note that $H(s,t)\le H(s)+H(t)+O(1)$ holds for all $s,t\in\X$
by \eqref{minimal}.
It follows from \eqref{n-f(n)-lower-bound} that
\begin{equation}\label{Hfg}
  g(k)-H(g(k))<H(\rest{\Omega_V}{g(k)-f(g(k))+c})+O(1)
\end{equation}
for all $k\in\N^+$.
On the other hand,
note that $\lim_{n\to\infty}n-H(n)=\infty$ holds by \eqref{eq: fas}.
Hence
we have $\lim_{k\to\infty}g(k)-H(g(k))=\infty$
since the function $g$ is increasing.
Based on \eqref{Hfg},
it is then easy to see that
the function $g(k)-f(g(k))+c$ of $k$ is not bounded to the above.
Therefore
there exists an increasing total recursive function
$h\colon \N^+\to \N^+$
such that $g(h(l))-f(g(h(l)))+c\ge 1$ for all $l\in\N^+$
and the function $g(h(l))-f(g(h(l)))+c$ of $l$ is increasing.
For clarity,
we define a total recursive function $m\colon \N^+\to \N^+$
by $m(l)=g(h(l))-f(g(h(l)))+c$.
Since $m$ is an increasing function,
it is then easy to see that
there exists a partial recursive function
$\Phi\colon \N^+\to \N^+$ such that
$\Phi(m(l))=g(h(l))$ for all $l\in\N^+$.
Thus, based on \eqref{Psi}, it is shown that
\begin{equation*}
  H(g(h(l)),\rest{\Omega_V}{m(l)})
  \le
  H(\rest{\Omega_V}{m(l)})+O(1)
\end{equation*}
for all $l\in\N^+$.
It follows from \eqref{n-f(n)-lower-bound} that
\begin{equation}\label{mfHm}
  m(l)+f(g(h(l)))<H(\rest{\Omega_V}{m(l)})+O(1)
\end{equation}
for all $l\in\N^+$.
On the other hand,
note that
the total recursive function $f(g(h(l)))$ of $l$ is increasing
and therefore not bounded to the above.
Thus, in a similar manner to the proof of Lemma~\ref{H-unboundedf}
we can show that
$H(m(l))\le f(g(h(l)))$ for infinitely many $l\in\N^+$.
It follows from \eqref{mfHm} that
\begin{equation*}
  m(l)+H(m(l))<H(\rest{\Omega_V}{m(l)})+O(1)
\end{equation*}
for infinitely many $l\in\N^+$.
Since the function $m$ is increasing,
we see that $\Omega_V$ is strongly Chaitin random.

Hereafter,
in the same manner as the proof of
the implication (i) $\Rightarrow$ (ii) of Theorem~\ref{main3},
we can derive a contradiction using Theorem~\ref{Miller}.
This completes the proof.
\end{proof}

%%%%%%%%%%%%%%%%%%%%%%%%%%%%%%%%%%%%%%%%%%%%%%%%%%%%%%%%%%%%%%%%%%%%%%%%%%
\section*{Acknowledgments}

This work was supported
by KAKENHI, Grant-in-Aid for Scientific Research (C) (20540134),
by SCOPE
%(Strategic Information and Communications R\&D Promotion Programme)
from the Ministry of Internal Affairs and Communications of Japan,
and by CREST from Japan Science and Technology Agency.

%%%%%%%%%%%%%%%%%%%%%%%%%%%%%%%%%%%%%%%%%%%%%%%%%%%%%%%%%%%%%%%%%%%%%%%%%%

%%%%%%%%%%%%%%%%%%%%%%%%%%%%%%%%%%%%%%%%%%%%%%%%%%%%%%%%%%%%%%%%%%%%%%%%%%%
%\newpage
\appendix
%\noi\textbf{\Large Appendix}

%%%%%%%%%%%%%%%%%%%%%%%%%%%%%%%%%%%%%%%%%%%%%%%%%%%%%%%%%%%%%%%%%%%%%%%%%%%
\section{The proof of Theorem~\ref{time}}
\label{proof-time}

%For completeness,
We here prove Theorem~\ref{time}
using Lemma~\ref{longer-running-time} below.
Let $V$ be an optimal computer,
and let $M$ be a deterministic Turing machine which computes $V$.

\begin{lemma}\label{longer-running-time}
There exists $d\in\N$ such that,
for every $p\in\Dom V$,
there exists $q\in\Dom V$
for which $\abs{q}\le \abs{p}+d$ and
the running time of $M$ on the input $q$ is longer than
the running time of $M$ on the input $p$.
\end{lemma}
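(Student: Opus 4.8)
The plan is to force one program to run longer than another by making it output a string whose sheer length guarantees a long transcription. Let $M$ be the fixed deterministic Turing machine computing $V$, and for each $p\in\X$ let $h_M(p)$ be the computation history of $M$ on input $p$ and $\bin_M(p)\in\X\cup\XI$ its binary representation in a fixed format, exactly as in the proof of Theorem~\ref{weaksim}; by our convention on computing a computer by a deterministic Turing machine, $\bin_M(p)\in\X$ if and only if $p\in\Dom V$. I would pick the format so that $\abs{\bin_M(p)}$ strictly exceeds the running time of $M$ on $p$ whenever $p\in\Dom V$. This is automatic for any natural encoding: a halting run of $t$ steps has a history consisting of the initial configuration followed by one configuration per step, and each configuration contributes at least one bit, so $\abs{\bin_M(p)}\ge t+1$.

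Next I would introduce the computer $D$ with $\Dom D=\Dom V$ and $D(p)=\bin_M(p)$ for every $p\in\Dom V$; as in the proof of Theorem~\ref{weaksim}, such a computer exists. Applying the optimality of $V$ to $D$ yields $d\in\N$ with the property that for every $p\in\Dom D=\Dom V$ there is $q$ with $V(q)=D(p)$ and $\abs{q}\le\abs{p}+d$. I claim this $d$ witnesses the lemma. Indeed, fix $p\in\Dom V$ and let $q$ be as above. Since $V(q)=\bin_M(p)\in\X$, the machine $M$ halts on input $q$, so $q\in\Dom V$ and $\abs{q}\le\abs{p}+d$. Moreover, running $M$ on $q$ produces the output $\bin_M(p)$, and writing a string of length $\ell$ on the output tape costs at least $\ell$ steps; hence the running time of $M$ on $q$ is at least $\abs{\bin_M(p)}$, which by the choice of encoding is larger than the running time of $M$ on $p$. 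This is exactly the assertion of the lemma.

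I expect the only delicate point to be making the inequality $\abs{\bin_M(p)}>(\text{running time of }M\text{ on }p)$ legitimate, i.e.\ being explicit enough about the encoding of computation histories that transcribing the output is genuinely the bottleneck; but for any standard format this follows immediately from the ``at most one symbol written per step'' property of Turing machines, so the whole argument amounts to bookkeeping around the definition of optimality together with the history-encoding device already used for Theorem~\ref{weaksim}.
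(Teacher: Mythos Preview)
Your proof is correct and follows the same idea as the paper: build a computer that on input $p$ outputs a string longer than $T(p)$, invoke optimality of $V$ to get a short $q$ with that output, and use the one-symbol-per-step bound to conclude $T(q)>T(p)$.

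The execution differs in minor bookkeeping. The paper takes $C(p)=1^{2\abs{p}+T(p)+1}$ and works with the model-agnostic lower bound $T(q)\ge\abs{V(q)}-\abs{q}$ (valid even if input and output share a tape); making $\abs{V(q)}-\abs{q}>T(p)$ hold then requires the $2\abs{p}$ padding and only succeeds for $\abs{p}\ge d_1$, so short inputs are handled separately via the unboundedness of running times on the non-recursive set $\Dom V$. You instead take $D(p)=\bin_M(p)$ and use $T(q)\ge\abs{V(q)}$, which presumes an initially blank output tape; under that standard assumption your argument goes through uniformly with no case split. So your route is a touch cleaner at the cost of a mild model assumption the paper avoids; if one wanted the paper's robustness, padding the history encoding so that $\abs{\bin_M(p)}\ge 2\abs{p}+T(p)+1$ would recover it.
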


\begin{proof}
Consider the computer $C$ such that
(i) $\Dom C=\Dom V$ and
(ii) for every $p\in\Dom V$,
$C(p)=1^{2\abs{p}+T(p)+1}$,
where $T(p)$ is the running time of $M$ on the input $p$.
It is easy to see that such a computer $C$ exists.
Then,
since $V$ is an optimal computer,
from the definition of an optimal computer
there exists $d_1\in\N$ with the following property;
if $p\in\Dom C$, then there is $q$ for which
$V(q)=C(p)$ and $\abs{q}\le\abs{p}+d_1$.

Thus,
for each $p\in\Dom V$ with $\abs{p}\ge d_1$,
there is $q$ for which
$V(q)=C(p)$ and $\abs{q}\le\abs{p}+d_1$.
It follows that
\begin{equation}\label{output-input}
  \abs{V(q)}=2\abs{p}+T(p)+1>\abs{p}+d_1+T(p)\ge\abs{q}+T(p).
\end{equation}
Note that
exactly $\abs{q}$ cells on the tapes of $M$ have the symbols $0$ or $1$
in the initial configuration of $M$ with the input $q$,
while at least $\abs{V(q)}$ cells on the tape of $M$,
on which the output is put, have the symbols $0$ or $1$
in the resulting final configuration of $M$.
Since $M$ can write at most one $0$ or $1$ on the tape,
on which an output is put, every one step of its computation,
the running time $T(q)$ of $M$ on the input $q$
is bounded to the below by the difference $\abs{V(q)}-\abs{q}$.
Thus, by \eqref{output-input}, we have $T(q)>T(p)$.

On the other hand,
since $\Dom V$ is not a recursive set,
the function $T^M_n$ of $n\ge L_M$ is not bounded to the above.
Therefore, there exists $r_0\in\Dom V$ such that,
for every $p\in\Dom C$ with $\abs{p}<d_1$, $T(r_0)>T(p)$.
By setting $d_2=\abs{r_0}$
we then see that, for every $p\in\Dom C$ with $\abs{p}<d_1$,
$\abs{r_0}\le\abs{p}+d_2$.

Thus,
by setting $d=\max\{d_1,d_2\}$
we see that, for every $p\in\Dom V$,
there is $q\in\Dom V$ for which
$\abs{q}\le\abs{p}+d$ and $T(q)>T(p)$.
This completes the proof.
\end{proof}

The proof of Theorem~\ref{time} is given as follows.

\begin{proof}[Proof of Theorem~\ref{time}]
By considering the following procedure,
we first show that $n\le H(T^M_n,n)+O(1)$ for all $n\ge L_M$.

Given $(T^M_n,n)$ with $n\ge L_M$,
one first calculates the finite set $\rest{\Dom V}{n}$
by simulating the computation of $M$ with the input $p$
until at most $T^M_n$ steps,
for each $p\in\X$ with $\abs{p}\le n$.
%Obviously, this can be possible.
Then,
by calculating the set $\{\,V(p)\mid p\in\rest{\Dom V}{n}\}$
and picking any one finite binary string $s$ which is not in this set,
one can obtain $s\in\X$ such that $n<H_V(s)$.

Hence, there exists a partial recursive function
$\Psi\colon \N\times\N^+\to \X$ such that,
for all $n\ge L_M$,
$n<H_V(\Psi(T^M_n,n))$.
It follows from the optimality of $V$ and \eqref{Psi} that
$n<H(T^M_n,n)+O(1)$
for all $n\ge L_M$.

For each $p\in\Dom V$,
let $T(p)$ be the running time of $M$ on the input $p$.
It follows from Lemma~\ref{longer-running-time} that
there exists $d\in\N$ such that, for every $p\in\Dom V$,
there exists $q\in\Dom V$
for which $\abs{q}\le \abs{p}+d$ and $T(q)>T(p)$. 
By considering the following procedure,
we next show that $H(T^M_n,n)\le H(T^M_n)+O(1)$ for all $n\ge L_M$.

Given $T^M_n$ with $n\ge L_M$,
one first simulates the computation of $M$ with the input $p$
until at most $T^M_n$ steps,
one by one for each element $p$ in $\X$,
in the order defined on the ordered set $\X$.
Due to the definition of $T^M_n$,
during the simulations
one can eventually find the first element $p_0$ of $\X$ such that
$T(p_0)=T^M_n$.
For this $p_0$,
$\abs{p_0}\le n$ due to the definition of $T^M_n$, 
and there exists $q\in\Dom V$
for which $\abs{q}\le \abs{p_0}+d$ and $T(q)>T^M_n$.
For this $q$, $\abs{q}>n$ due to the definition of $T^M_n$ again.
Therefore $d\ge 1$ and $\abs{p_0}\le n<\abs{p_0}+d$.
Thus, there are still only $d$ possibilities of $n$,
so that one needs only $\lceil \log_2 d\rceil$ bits more
in order to determine $n$.

Thus, there exists a partial recursive function
$\Phi\colon \N\times\X\to\N\times \N^+$ such that,
for every $n\ge L_M$,
there exists $s\in\X$ with the properties that
$\abs{s}=\lceil \log_2 d\rceil$ and $\Phi(T^M_n,s)=(T^M_n,n)$.
It follows from \eqref{Psi} and \eqref{minimal} that
$H(T^M_n,n)\le
H(T^M_n)+
\max\{H(s)\mid s\in\X\ \&\ \abs{s}=\lceil \log_2 d\rceil\}+O(1)$
for all $n\ge L_M$.

Finally, we show that $H(T^M_n)\le n+O(1)$ for all $n\ge L_M$.
Let us consider the computer $C$ such that
(i) $\Dom C=\Dom V$ and
(ii) for every $p\in\Dom V$, $C(p)=T(p)$.
Obviously, such a computer $C$ exists.
Then, by \eqref{minimal} we see that,
for every $p\in\Dom V$, $H(T(p))\le\abs{p}+O(1)$.
For each $n\ge L_M$,
it follows from the definition of $T^M_n$
that there exists $r\in\Dom V$ such that
$\abs{r}\le n$ and $T(r)=T^M_n$.
Hence,
$H(T^M_n)=H(T(r))\le\abs{r}+O(1)\le n+O(1)$.
This completes the proof.
\end{proof}

%%%%%%%%%%%%%%%%%%%%%%%%%%%%%%%%%%%%%%%%%%%%%%%%%%%%%%%
\section{The proof of Lemma~\ref{H-unboundedf}}
\label{proof-H-unboundedf}

%For completeness, we prove Lemma~\ref{H-unboundedf} here.
Lemma~\ref{H-unboundedf} is proved as follows.

\begin{proof}[Proof of Lemma~\ref{H-unboundedf}]
Contrarily, assume that
there exists $c\in\N^+$ such that, for every $n\ge c$, $f(n)<H(n)$.
Then, since $f$ is not bounded to the above,
it is easy to see that
there exists a total recursive function $\Psi\colon \N^+\to \N^+$
such that, for every $k\in\N^+$, $k<H(\Psi(k))$.
Thus, using \eqref{Psi} we see that $k<H(k)+O(1)$ for all $k\in\N^+$. 
On the other hand,
using \eqref{eq: fas} we have
$H(k)\le 2\log_2 k+O(1)$ for all $k\in\N^+$.
Therefore $k<2\log_2 k+O(1)$ for all $k\in\N^+$.
However,
we have a contradiction on letting $k\to\infty$ in this inequality,
and the result follows.
\end{proof}

%%%%%%%%%%%%%%%%%%%%%%%%%%%%%%%%%%%%%%%%%%%%%%%%%%%%%%%%%%%%%%%%%%%%%%%%%%%
\section{The proof of Theorem~\ref{OmegaVhaltC}}
\label{proof-OmegaVhaltC}

%Theorem~\ref{OmegaVhaltC} can be proved based on
%Fact~\ref{Chaitin} and Corollary~\ref{oc-c}.
%For completeness, however, we prove Theorem~\ref{OmegaVhaltC} here
%in a direct manner without using Corollary~\ref{oc-c}.
In what follows,
we prove Theorem~\ref{OmegaVhaltC}
in a direct manner without using Corollary~\ref{oc-c}.

\begin{proof}[Proof of Theorem~\ref{OmegaVhaltC}]
In the case where $\Dom C$ is a finite set,
the result is obvious.
Thus,
in what follows,
we assume that $\Dom C$ is an infinite set.

Let $p_0,p_1,p_2,p_3, \dotsc$ be
a particular recursive enumeration of $\Dom C$,
and let $D$ be a computer such that
$\Dom D=\Dom C$ and $D(p_i)=i$ for all $i\in\N$. 
Recall here that we identify $\X$ with $\N$.
It is also easy to see that such a computer $D$ exists.
Since $V$ is an optimal computer,
from the definition of optimality of a computer
there exists $d\in\N$ such that, for every $i\in\N$,
there exists $q\in\X$ for which $V(q)=i$ and $\abs{q}\le \abs{p_i}+d$.
Thus,
%for every $i\in\N$,
%\begin{equation*}
%  H_V(i)\le\abs{p_i}+d.
%\end{equation*}
$H_V(i)\le\abs{p_i}+d$ for every $i\in\N$.
For each $s\in \X$,
we define $P_V(s)$ as $\sum_{V(p)=s}2^{-\abs{p}}$.
Then, for each $i\in\N$,
\begin{equation}\label{imp}
  P_V(i)\ge 2^{-H_V(i)}\ge 2^{-\abs{p_i}-d}.
\end{equation}
Then, by the following procedure,
we see that
there exists an oracle deterministic Turing machine $M$
%and $d\in\N$
such that,
for all $n\in\N^+$,
$M^{\{\reste{\Omega_V}{n+d}\}}(n)=\rest{\Dom C}{n}$.

Given $n$ and $\rest{\Omega_V}{n+d}$,
one can find $k_e\in\N$ such that
$\sum_{i=0}^{k_e} P_V(i)>0.(\rest{\Omega_V}{n+d})$.
This is possible because
$0.(\rest{\Omega_V}{n+d})<\Omega_V$ and
$\lim_{k\to\infty}\sum_{i=0}^k P_V(i)=\Omega_V$.
It follows that
\begin{equation*}
  \sum_{i=k_e+1}^{\infty} P_V(i)
  =\Omega_V-\sum_{i=0}^{k_e} P_V(i)
  <\Omega_V-0.(\rest{\Omega_V}{n+d})<2^{-n-d}.
\end{equation*}
Therefore, by \eqref{imp},
\begin{equation*}
  \sum_{i=k_e+1}^{\infty} 2^{-\abs{p_i}}
  \le 2^{d}\sum_{i=k_e+1}^{\infty} P_V(i)
  <2^{-n}.
\end{equation*}
It follows that,
for every $i>k_e$,
$ 2^{-\abs{p_i}}<2^{-n}$
and therefore
$n<\abs{p_i}$.
Hence,
\begin{equation*}
  \rest{\Dom C}{n}
  =\{\,p\in\Dom C\mid \abs{p}\le n\,\}
  =\{\,p_i\mid i\le k_e\;\&\;\abs{p_i}\le n\,\}.
\end{equation*}
Thus, by calculating the finite set
$\{\,p_i\mid i\le k_e\;\&\;\abs{p_i}\le n\,\}$,
one can obtain the set $\rest{\Dom C}{n}$.
\end{proof}

%%%%%%%%%%%%%%%%%%%%%%%%%%%%%%%%%%%%%%%%%%%%%%%%%%%%%%%%%%%%%%%%%%%%%%%%%%%
\end{document}